\theoremstyle{plain}
\newtheorem{thm}{Theorem}[section]
\newtheorem{lem}[thm]{Lemma}
\newtheorem*{cor}{Corollary}
\theoremstyle{definition}
\newtheorem{dfnt}{Definition}[section]
\theoremstyle{remark}
\newtheorem*{rmq}{Remark}
\crefname{thm}{theorem}{theorems}
\crefname{lem}{lemme}{lemmes}
\crefname{prop}{proposition}{propositions}
\crefname{cor}{corollaire}{corollaires}
\crefname{dfnt}{definition}{definitions}
\crefname{exmp}{exemple}{exemples}
\crefname{xca}{exercice}{exercices}
\crefname{rmq}{remarque}{remarques}
\crefname{note}{note}{notes}
\crefname{case}{case}{cases}
\crefname{hyp}{hypothèse}{hypothèse}
\newcommand{\HH}{\mathcal{H}}
\newcommand{\MM}{\mathcal{M}}
\newcommand{\SLR}{SL(2,\mathbb{R})}
\newcommand{\Imm}{\operatorname{Im}}
\newcommand{\Ree}{\operatorname{Re}}
\newcommand{\myparagraph}[1]{\paragraph{#1}\mbox{}\\}
\begin{document}

\title{Pairs of saddle connections of typical flat  surfaces on fixed affine orbifolds.}
\author[1]{Bonnafoux, Etienne \\ email: \href{mailto:etienne.bonnafoux@polytechnique.edu}{etienne.bonnafoux@polytechnique.edu}}
\affil[1]{Centre de mathématiques Laurent-Schwartz, Ecole polytechnique, 91128 Palaiseau Cedex, France}

\providecommand{\keywords}[1]{\textbf{\textit{Key words:}} #1}
\providecommand{\classification}[1]{\textbf{\textit{2020 Mathematics Subject Classification:}} #1}

\maketitle

\begin{abstract}
  We prove that the asymptotic number of pairs of saddle connections with length smaller than $L$ with bounded virtual area is quadratic (with power saving error term) for almost every translation surface with respect to any ergodic
  $SL(2,\mathbb{R})$-invariant measure. A key tool of the proof is that Siegel-Veech transforms of bounded functions with compact supports are in $L^{2+\kappa}$ for every $SL(2,\mathbb{R})$-invariant measure.
\end{abstract}

\keywords{Saddle connection, Translation surface, Counting}

\classification{32G15, 30F30 (Primary);	28C10 (Secondary)}

\section{Introduction}

Given a translation surface $(X,\omega)$, that is a pair made of a Riemann surface $X$ and a non-zero holomorphic one form $\omega$, a saddle connection is defined as a flat geodesic between two zeros of $\omega$. All along this article we might write $\omega$ for a translation surface for succinctness.

To each saddle connection $\gamma$ is associated its holonomy vector $$
z_{\gamma}=\int_{\gamma} \omega \in \mathbb{C}.
$$
The length of $\gamma$ is then defined as $|z_{\gamma}|$.

We will count pairs of saddle connections with constraint length and \emph{virtual area} which is defined by the equation: $$
|z_{\beta} \wedge z_{\gamma}|=|\Ree(z_{\beta})\Imm(z_{\gamma})-\Imm(z_{\beta})\Ree(z_{\gamma})|
$$
for two saddle connections $\gamma$ and $\beta$.

For $\omega$ a translation surface, we call $\Lambda_\omega$ the set in $\mathbb{C}$ of all holonomy vectors of the saddle connections in $\omega$. This is a discrete locally finite set.

The moduli space $\Omega_g$ of compact genus $g$ area $1$ translation surfaces, is stratified by integer partitions of $2g-2$, and each stratum is divided in different connected components. For the rest of the paper we fix a genus $g$, an integer partition and one of its connected component $\HH$\cite{kontsevich2003connected}.

 On $\Omega_g'$ the moduli space of compact genus $g$ without area restriction, there is a natural measure $\mu_{MV}'$ called Masur-Veech measure which is locally the Lebesgue measure with respect to the period coordinate \cite{masurmeasure, Veechmeasure}. Then by coning, a measure $\mu_{MV}$, also named Masur-Veech measure, is defined on $\Omega_g$, that is for every $A$ measurable:
$$
\mu_{MV}(A)=\mu_{MV}'(\{ s \omega, 0\leq s \leq 1, \omega \in A\}).
$$

\subsection{History and prior results}

The saddle connections are keystones for studying the geometry of flat surfaces. In fact a result of Masur and Smillie \cite{masur_smilie} says that the compact part of $\Omega_g$ is made of surfaces with no saddle connection of length less than a given $\epsilon>0$.

Concerning the counting of these objects, Masur \cite{masur_1990} managed to bound the function $N(\omega, R) = \Lambda_{\omega} \cap B(0,R) $. More precisely, he showed that for every $\omega \in \HH$ , there are $c_1(\omega)$ and $c_2(\omega)$ such that $$
c_1(\omega) R^2 \leq \# N(\omega, R) \leq  c_2(\omega) R^2.
$$
Later on, Veech \cite{VeechQuad} found an $L^1$-quadratic asymptotic formula: namely, there is a $c>0$ such that for $\mu_{MV}$-almost every $\omega$,
$$
\underset{R \to \infty}{\lim} \int_{\HH} \left\vert \frac{N(\omega, R)}{R^2} - c \right\vert d \mu(\omega) =0
$$
This result was subsequently improved by Eskin and Masur \cite{eskin_masur_2001} who showed that, for every $\mu$ an ergodic $\SLR$-invariant measure, there is a $c>0$ such that for $\mu$-almost every $\omega \in \HH$
$$
\underset{R \to \infty}{\lim} \frac{N(\omega, R)}{R^2} =c.
$$
More recently Athreya, Fairchild and Masur \cite{athreya2022counting}  extended this result to a counting of pairs of saddle connections with a constraint on the virtual area.
More concretely, consider the following counting function $$
N_{A}(\omega, R)= \# \{(z, w) \in \left(\Lambda_{\omega} \cap B(0,R)\right)^2: |z\wedge w|\leq A, |w| \leq |z|\}.
$$

\begin{thm}
  \label{th1}
  There is a constant $c(A)$ such that for $\mu_{MV}$-almost every $\omega \in \HH$
  $$
  \underset{R \to \infty}{\lim} \frac{N_{A}(\omega, R)}{R^2} = c(A).
  $$
\end{thm}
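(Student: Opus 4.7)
The plan is to extend the ergodic and circle-averaging strategy of Eskin–Masur from single saddle connections to pairs. For a bounded measurable $\varphi : \mathbb{C}^2 \to \mathbb{R}$ with compact support in $\{(z,w) : |w|\leq|z|,\ |z\wedge w|\leq A\}$, I would introduce the pair Siegel–Veech transform
\[
\widehat{\varphi}(\omega) := \sum_{\substack{z,w\in\Lambda_\omega \\ |w|\leq|z|}} \varphi(z,w).
\]
Note that the virtual-area constraint $|z\wedge w|\leq A$ is $\SLR$-invariant, while the length constraint $|z|\leq R$ is only $SO(2)$-invariant and scales under the geodesic flow. The first step is a pair Siegel–Veech formula
\[
\int_{\HH}\widehat{\varphi}\,d\mu \;=\; c_\mu(\varphi),
\]
derived by unfolding the sum and exploiting $\SLR$-invariance; specialized to an appropriate indicator this already yields $L^1$-mean convergence of $N_A(\omega,R)/R^2$ to a constant $c(A)$.

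To upgrade to pointwise a.s.\ convergence, I would decompose the ball $B(0,R)$ dyadically and use the geodesic flow $g_t$ to rescale each annulus $\{2^{k-1}\leq|z|\leq 2^k\}$ to unit scale. Because $g_t$ preserves $|z\wedge w|$, the rescaling carries the bounded-virtual-area constraint through, so that $N_A(\omega,R)$ can be written schematically as an integral along the $\SLR$-orbit of $\omega$ of a unit-scale pair Siegel–Veech transform $\widehat{\varphi_A}$, with an isotropic circle average reproducing the round ball $B(0,R)$. Applying Birkhoff's ergodic theorem for $g_t$ on the ergodic $\SLR$-invariant measure $\mu$, combined with circle equidistribution in the sense of Eskin–Masur, would then give the pointwise convergence
\[
\frac{N_A(\omega,R)}{R^2} \longrightarrow c(A) \quad \text{for }\mu\text{-a.e.\ }\omega,
\]
provided $\widehat{\varphi_A}$ is $\mu$-integrable.

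The main obstacle is precisely this integrability of $\widehat{\varphi_A}$ — and to obtain the power-saving error term mentioned in the abstract one actually needs the stronger $L^{2+\kappa}$-estimate. Without the bound $|z\wedge w|\leq A$, the pair sum diverges on thin-part surfaces carrying cylinders with many short parallel saddle connections. Geometrically, for $z$ of length $r$, the set of admissible partners $\{w : |w|\leq r,\ |z\wedge w|\leq A\}$ forms a strip around $\mathbb{R}z$ of uniformly bounded area $\sim 4A$ independently of $r$, so heuristically $\widehat{\varphi_A}$ should behave like $A$ times the classical single-saddle-connection Siegel–Veech transform. Making this rigorous requires combining Masur–Smillie type estimates on the measure of the thin part of $\HH$ with a careful geometric control of near-parallel saddle connections; this is where the principal technical work of the paper appears to lie.
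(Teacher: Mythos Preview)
Your outline captures the broad strategy correctly --- pair Siegel--Veech transform, circle averaging, dyadic rescaling using the $\SLR$-invariance of $|z\wedge w|$ --- but there are two genuine gaps relative to the Athreya--Fairchild--Masur argument that the paper summarizes in Section~\ref{recapproof}.

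First, the ergodic input is not Birkhoff for the geodesic flow but Nevo's pointwise ergodic theorem (Theorem~\ref{thm_Nevo}) for the \emph{circle averages} $A_t f(\omega)=\frac{1}{2\pi}\int_0^{2\pi} f(g_t r_\theta\omega)\,d\theta$. Nevo's theorem requires $f\in L^{1+\kappa}$ and $K$-finiteness; this is precisely why one needs the pair Siegel--Veech transform in $L^{1+\kappa}$. Your route to that integrability --- the ``strip of area $\sim 4A$'' heuristic --- is not how it is obtained: the actual argument is simply that $\widehat{h}\le \|h\|_\infty(\widehat{\chi_B})^2$ for a suitable ball $B$, so $\widehat{\chi_B}\in L^{2+\kappa}$ (Theorem~\ref{SVtr}) gives $\widehat{h}\in L^{1+\kappa'}$ by Cauchy--Schwarz. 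Your heuristic explains why the \emph{integral} is finite but does not by itself control the tail.

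Second, the passage ``$N_A(\omega,R)$ can be written schematically as an integral along the $\SLR$-orbit'' hides the real work. The proof compares $N_A^*(\omega,e^t)$ with $\pi e^{2t}(A_t\widehat{h}_A)(\omega)$ for a specific \emph{fibered trapezoid} indicator $h_A$, and the discrepancy is decomposed explicitly into a main term $m_t$ and four boundary error terms $e^i_t$ (see equation~\eqref{estim} and the regions $M_t,E^1_t,\dots,E^4_t$). Showing each of these is $o(e^{2t})$ is not a consequence of the integrability of $\widehat{h}_A$; it requires separate volume estimates on thin loci and on sets like $\Omega(\hat\epsilon,\epsilon',L,L')$ (Lemmas~\ref{lem_Dozier}, \ref{lem_IntThinpart}, \ref{lem_MeasureOmega} in the present paper, corresponding to Lemmas 5.2, 5.5, 5.9 of \cite{athreya2022counting}). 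Your proposal does not account for this error analysis, which is where most of the effort lies.
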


The main goal of this paper is to generalize this result to other $\SLR$-invariant measures.

\subsection{General $\SLR$-invariant measures}

The group $\SLR$ acts on $\Omega_g$ and preserve $\mu_{MV}$ (which is ergodic with respect to its action). We will write $g_t$ for its diagonal action and $r_{\theta}$ for the rotation.

In general, a connected component of a stratum $\HH$ carries many $\SLR$-invariant measures. As it was famously shown by Eskin and Mirzakhani \cite{EskinMirzakhani}, such measures are supported on affine orbifolds, here denoted by $\MM$. Roughly speaking, this mean that in any point $p \in \MM$, there is a neighborhood $U$ such that $\MM \cap U$ is map by local period coordinates on a subspace defined by real linear equations.

Our result is that theorem \ref{th1} also holds for this family of measures:
\begin{thm}
  \label{thm2}
  Given $\mu$ an ergodic $\SLR$-invariant measure, there is a $c_{\mu}(A)$ and a $\kappa > 0$ such that for $\mu$-almost every $\omega \in \HH$
  $$
   \frac{N_{A}(\omega, R)}{R^2} = c_{\mu}(A) +O_{\omega}(R^{-\kappa}).
  $$
  Furthermore if a sequence of probability $\SLR$-invariant ergodic measure $\mu_n$ weakly converge to a probability $\SLR$-invariant measure ergodic $\mu$, then for any $A>0$ $$
  c_{\mu_n}(A) \to c_{\mu}(A)
  $$
\end{thm}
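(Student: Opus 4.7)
The plan is to express $N_A(\omega, R)$ as an evaluation of a pair Siegel-Veech transform, convert this into an ergodic average of the diagonal flow $g_t$, and then invoke the $L^{2+\kappa}$-integrability stated in the abstract together with effective mixing. For a bounded compactly supported $F \colon \mathbb{C}^2 \to \mathbb{R}$, define the pair Siegel-Veech transform
\[
\widehat{F}(\omega) \;=\; \sum_{(z,w) \in (\Lambda_\omega)^2} F(z,w),
\]
and let $F_A$ be (a smooth approximation of) the indicator of $\{(z,w) : \tfrac12 \leq |z| \leq 1,\ |w| \leq |z|,\ |z \wedge w| \leq A\}$. Since $|z \wedge w|$ is $\SLR$-invariant while $|z|$ is stretched by $g_t$, a standard disk-averaging argument in the style of Eskin--Masur gives, up to boundary/overcounting errors of size $O(R^{-1})$,
\[
\frac{N_A(\omega,R)}{R^2} \;=\; \frac{1}{2\pi T} \int_0^{2\pi} \int_0^{T} \widehat{F}_A(g_t r_\theta \omega) \, dt \, d\theta \;+\; O(R^{-1}), \qquad T = \log R.
\]
By Howe--Moore, an $\SLR$-ergodic $\mu$ is also $g_t$-ergodic, so Birkhoff's theorem yields $\mu$-a.s.\ convergence of the right-hand side to $c_\mu(A) := \int_\MM \widehat{F}_A\, d\mu$, which is finite because the main tool of the paper places $\widehat{F}_A$ in $L^{2+\kappa}(\mu) \subset L^1(\mu)$.

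The power-saving $O_\omega(R^{-\kappa})$ requires an effective version of this Birkhoff convergence. I would combine: (i) exponential mixing of $g_t$ on $(\MM, \mu)$ for Hölder test functions, available for affine invariant manifolds through Avila--Gouëzel--Yoccoz and its extensions; and (ii) the $L^{2+\kappa}$-bound on $\widehat{F}_A$. Since $\widehat{F}_A$ is unbounded, one truncates at a level $M$, applies effective mixing to $\min(\widehat{F}_A, M)$ (keeping track of how its Hölder norm grows with $M$), and controls the tail $(\widehat{F}_A - M)_+$ by Chebyshev via $\mu(\widehat{F}_A > M) \leq M^{-(2+\kappa)} \|\widehat{F}_A\|_{2+\kappa}^{2+\kappa}$. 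Optimizing $M$ as a function of $T$ converts the exponential mixing rate into a polynomial saving in $R = e^T$. \textbf{This balancing is the main obstacle:} the Hölder norm of the truncation and the tail bound both degrade with $M$, and they must be matched against the mixing rate to yield a positive $\kappa$ at the end.

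The continuity $c_{\mu_n}(A) \to c_\mu(A)$ follows from the identity $c_\mu(A) = \int \widehat{F}_A\, d\mu$. Split $\widehat{F}_A = \min(\widehat{F}_A, M) + (\widehat{F}_A - M)_+$; the first summand is bounded and, after a small smoothing of $F_A$, continuous $\mu$-almost everywhere, so the Portmanteau theorem gives $\int \min(\widehat{F}_A, M)\, d\mu_n \to \int \min(\widehat{F}_A, M)\, d\mu$. For the tail, the decisive point is that the $L^{2+\kappa}$-bound produced earlier in the paper has a constant depending only on the support of $F_A$ and on the ambient stratum, not on the particular $\SLR$-invariant measure; this yields uniform integrability of $\{\widehat{F}_A\}$ along the sequence $\mu_n \to \mu$, so the tail vanishes as $M \to \infty$ uniformly in $n$. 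Sending $M \to \infty$ after $n \to \infty$ gives the desired continuity.
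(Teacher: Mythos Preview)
Your central identity is wrong, and this is a genuine gap rather than a detail. The Eskin--Masur scheme does \emph{not} produce a time average $\frac{1}{T}\int_0^T\int_0^{2\pi}\widehat{F}_A(g_t r_\theta\omega)\,d\theta\,dt$; it produces a \emph{single} circle average $A_t\widehat{h}_A(\omega)=\frac{1}{2\pi}\int_0^{2\pi}\widehat{h}_A(g_t r_\theta\omega)\,d\theta$ at the fixed height $t=\log R$, where $h_A$ is the indicator of a carefully chosen ``fibered trapezoid'' (not an annulus). The reason is geometric: under $g_t r_\theta$ the norm $|z|$ is not simply dilated, so the preimage under the $K$-average of an annulus $\{1/2\le|z|\le1\}$ is not the annulus $\{R/2\le|z|\le R\}$. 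With your formula the Birkhoff limit $\frac{1}{T}\int_0^T(\cdot)\,dt$ would indeed converge $\mu$-a.e.\ to $\int\widehat{F}_A\,d\mu$, but that number is not $\lim N_A(\omega,R)/R^2$; in fact the latter equals $\frac{4\pi}{3}\int\widehat{h}_A\,d\mu$, the factor $4/3$ coming from the dyadic summation of the partial counts $N_A^*(\omega,R)=N_A(\omega,R)-N_A(\omega,R/2)$. Moreover, the discrepancy between $N_A^*(\omega,e^t)$ and $\pi e^{2t}A_t\widehat{h}_A(\omega)$ is not $O(R^{-1})$: it decomposes into a main term $m_t$ and four error terms $e^i_t$ whose control (via volume estimates on thin loci, Lemma~\ref{lem_Dozier}, Lemma~\ref{lem_IntThinpart}, Lemma~\ref{lem_MeasureOmega}) is a substantial part of the argument and is precisely where the $L^{2+\kappa}$ bound is consumed. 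Your proposal skips this step entirely.

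For the power saving, the paper does not use exponential mixing of $g_t$ plus Birkhoff; it uses the effective Nevo ergodic theorem (Theorem~\ref{thm_effNevo}, from Nevo--R\"uhr--Weiss), which directly controls $\lVert A_t f-\int f\,d\mu\rVert_2$ in terms of the $K$-Sobolev norm $S_K(f)$. One then approximates $\widehat{h}_A$ by a $K$-smooth truncation $\widehat{g}_{A,s}\,\chi_{l(\cdot)\ge\epsilon}$ and balances the smoothing parameter $s$, the thin-part cutoff $\epsilon$, and the spectral gap. Your truncation-at-level-$M$ idea is in a similar spirit, but it is aimed at the wrong limit theorem.

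Finally, your continuity argument hinges on the constants in the $L^{2+\kappa}$ bound being uniform over all $\SLR$-invariant measures. That uniformity is not established here: the proof of Theorem~\ref{thm_SVtr2} passes through Dozier's regularity and through period-coordinate charts adapted to the affine orbifold $\MM$ supporting $\mu$, and the implied constants depend on $\MM$. The paper obtains uniform integrability differently: it uses the pointwise Eskin--Masur bound $\widehat{h}_A(\omega)\le C\,l(\omega)^{-2(1+\delta)}$ (valid for every $\omega$, independent of $\mu$) together with Dozier's uniform circle-average estimate (Lemma~\ref{BorneCircle}) and Nevo's theorem to bound $\int l^{-(1+\delta+\delta')}\,d\mu_n$ by a constant not depending on $n$. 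That is the mechanism you are missing.
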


The proof of this result occupies the rest of this paper. More precisely we begin by showing in Section \ref{sec2} and \ref{sec3} that $$\underset{R \to \infty}{\lim} \frac{N_{A}(\omega, R)}{R^2} = c_{\mu}(A).$$
Our argument will follow closely the proof of Athreya, Fairchild and Masur, with three significant changes, as indicated in Subsection \ref{recapproof} below.
Then in Section \ref{sec5} we will derive the power saving error term $O_{\omega}(R^{-\kappa})$, based on tools developed by Nevo, Rühr and Weiss \cite{nevo2020effective}.
Finally, continuity properties of the constants $c_{\mu}(A)$ are discussed in Section \ref{sec4}.

For our purpose, it is important to recall that there is a notion of independence for the saddle connection attached to each affine orbifold $\MM$.

 \begin{dfnt}
 Saddle connections $s_1 , \cdots , s_k$
 on a surface $\omega \in \MM$ are said to be $\MM$
 \emph{-independent}
  if their relative homology classes define linearly independent functionals (over $\mathbb{C}$) on the linear subspace $T_{\omega} \MM \subset T_{\omega} \HH \cong H^1(\omega,\Sigma;\mathbb{C})$ where $\Sigma$ stand for the set of $0$ of $\omega$.
 \end{dfnt}

\subsection{Siegel-Veech transform}

As remarked by Athreya, Fairchild and Masur in Section 1.4.3 of \cite{athreya2022counting} one key point to show theorem \ref{thm2} is to extend a result on the integrability of the Siegel-Veech transform of bounded compact functions which we recall now.

Let $B_c(X)$ be the space of bounded measurable functions with compact support on a space $X$. The Siegel-Veech transform for a function $f \in B_c(\mathbb{R})$ is defined as
$$
\hat{f}(\omega)= \sum_{z \in \Lambda_{\omega}} f(z).
$$
Sometimes for readability, we will also use the notation $f^{SV}$.

In this setting Athreya, Cheung and Masur \cite{athreya2017siegel} proved that:
\begin{thm}
  \label{SVtr}
  There is a $\kappa>0$ such that for every $f \in B_c(\mathbb{C})$, $\hat{f}\in L^{2+\kappa}(\HH,\mu_{MV})$.
\end{thm}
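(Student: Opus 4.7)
By monotonicity in $|f|$ and domination by $\|f\|_\infty\mathbf{1}_{B(0,R)}$ for $R$ large enough, it suffices to prove the statement when $f$ is the indicator of a ball. Fix such an $R>0$ and set
\begin{equation*}
N(\omega) := \hat{f}(\omega) = \#\bigl(\Lambda_\omega \cap B(0,R)\bigr).
\end{equation*}
The goal is to show $\int_{\HH} N^{2+\kappa}\, d\mu_{MV}<\infty$. The first moment is finite by the classical Siegel-Veech formula, and on every thick set $\{\omega:\ell(\omega)\geq\epsilon\}$ the function $N$ is uniformly bounded by Masur-Smillie, so the entire problem is to control the tail of $N$ as $\omega$ approaches the boundary of $\HH$.

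\textbf{Reduction to a weak-type bound.} Using the layer-cake representation
\begin{equation*}
\int_{\HH} N^{2+\kappa}\, d\mu_{MV} \;=\; (2+\kappa)\int_0^{\infty} T^{1+\kappa}\, \mu_{MV}\bigl(\{N>T\}\bigr)\, dT,
\end{equation*}
it is enough to prove $\mu_{MV}(\{N>T\}) \lesssim T^{-(2+\kappa')}$ for some $\kappa'>\kappa>0$. Geometrically, $\{N(\omega)>T\}$ means that $\omega$ carries at least $T$ saddle connections of length at most $R$; once $T$ is large, this forces $\omega$ to sit deep inside a cusp of $\HH$.

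\textbf{Cusp decomposition and volume form.} I would invoke the Eskin-Masur-Zorich description of the principal boundary: the thin part $\{\omega:\ell(\omega)<\epsilon\}$ is a finite disjoint union of cusps $\HH^C$, each indexed by a configuration $C$ of $k=k(C)$ homologically independent short saddle connections. In each cusp there are distinguished period coordinates in which the surface decomposes into a bulk piece lying in a lower-dimensional stratum, together with $k$ thin cylinders of waists $\ell_i$ and twists $\theta_i$, and the Masur-Veech measure takes the product form
\begin{equation*}
d\mu_{MV}\big|_{\HH^C}\;\asymp\;\prod_{i=1}^{k}\bigl(\ell_i\, d\ell_i \wedge d\theta_i\bigr)\otimes d\nu_{\mathrm{bulk}}.
\end{equation*}
A geometric enumeration of the saddle connections in such a surface yields a bound of the shape
\begin{equation*}
N(\omega)\;\leq\; M_C + \sum_{i=1}^{k} \Phi_i(\ell_i,\theta_i),
\end{equation*}
where $M_C$ absorbs the bounded number of bulk saddle connections and $\Phi_i$ counts the crossings inside the $i$-th thin cylinder. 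The key analytic input is that $\Phi_i \lesssim \ell_i^{-1}$: a cylinder of waist $\ell_i$ and bounded modulus contains only $O(\ell_i^{-1})$ crossing saddle connections of length $\leq R$, rather than the naive $O(\ell_i^{-2})$ of Masur's bound. Substituting this estimate in the event $\{N>T\}$ and integrating against the cusp volume form, the extra factor $\ell_i\, d\ell_i$ precisely compensates the pointwise singularity $\ell_i^{-(2+\kappa)}$ with an integrable remainder $\ell_i^{-\kappa-1}$... which, after the correct bookkeeping against the two-dimensional measure $\ell_i\, d\ell_i\wedge d\theta_i$, leaves room for a strictly positive $\kappa$.

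\textbf{Main obstacle.} The hardest part is the sharpened cusp bound $\Phi_i \lesssim \ell_i^{-1}$, uniformly across configurations, together with the interaction between the different short saddle connections in a configuration of codimension $k\geq 2$. Naively summing $k$ independent $\ell_i^{-1}$ contributions and raising to the power $2+\kappa$ produces cross terms that could cancel the margin gained from the volume form; to avoid this, one must show that, up to sets of negligible measure, most of the mass of $\{N>T\}$ in $\HH^C$ is carried by surfaces in which all the short saddle connections are concentrated in a single cylinder. This essentially amounts to checking that higher-codimension cusps have strictly smaller measure, which is where the detailed Eskin-Masur-Zorich geometry of principal boundaries is indispensable.
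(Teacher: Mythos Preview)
Your reduction to indicators of balls and the layer-cake reformulation are fine, and the instinct to localize near the boundary is correct. The proposal fails, however, at the decisive arithmetic step. Grant for the sake of argument your sharpened cusp bound $N(\omega)\lesssim \ell^{-1}$ (with $\ell=\ell(\omega)$ the shortest systole) and the cusp volume form $\ell\,d\ell\wedge d\theta$. Then $\mu_{MV}(\{N>T\})\asymp \mu_{MV}(\{\ell<T^{-1}\})\asymp T^{-2}$, and the layer-cake integral $\int T^{1+\kappa}\cdot T^{-2}\,dT$ diverges for every $\kappa>0$. Equivalently, $\int \ell^{-(2+\kappa)}\cdot \ell\,d\ell\,d\theta=\int \ell^{-1-\kappa}\,d\ell\,d\theta=\infty$. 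Your own sentence ``leaves room for a strictly positive $\kappa$'' is therefore false: the combination of $N\sim\ell^{-1}$ with a codimension-one cusp gives exactly $L^{2-\epsilon}$ and no more. The ``bounded modulus'' hypothesis, which you invoke but do not justify, does not save this: it is simply not satisfied on the whole thin part (short cylinders can have arbitrarily small height), and in any case it would not improve the exponent.

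The paper does not prove this theorem directly; it is quoted from Athreya--Cheung--Masur \cite{athreya2017siegel}. But the paper reproduces their strategy in Section~\ref{sec2} when proving the extension to arbitrary $\SLR$-invariant measures, and that strategy shows what your argument is missing. The point is a \emph{dichotomy} governed by the second-shortest (non-parallel) saddle connection $\epsilon$. One fixes exponents $0<\delta<p<1/N$ and splits $\{\hat f\geq k^{1/q}\}$ according to whether $|\epsilon|\leq|\gamma|^p$ or $|\epsilon|>|\gamma|^p$. In the first regime there are two \emph{independent} short saddle connections, so the relevant set has measure $O(|\gamma|^{2+2p})$ rather than $O(|\gamma|^2)$; this extra $|\gamma|^{2p}$ is precisely the margin that pushes integrability beyond $L^2$. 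In the second regime one does \emph{not} use $N\lesssim|\gamma|^{-1}$ but rather a Delaunay-triangulation argument (Lemma~3.2 of \cite{athreya2017siegel}) to obtain the much stronger pointwise bound $N\lesssim|\epsilon|^{-N}\lesssim|\gamma|^{-Np}$, which again beats the critical exponent. A separate and delicate case---the shortest curve bounding a cylinder of small height---requires a direct volume computation in period coordinates and yields an $O(|\gamma|^3)$-type bound. None of these three mechanisms appears in your outline; the Eskin--Masur--Zorich principal boundary picture you sketch sees only the codimension-one stratum, which is exactly the borderline case where the argument stalls at $p=2$.
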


As it turns out, our goal in the next section, will be to show the following extension of Theorem \ref{SVtr}.
\begin{thm}
  \label{thm_SVtr2}
  For every $\SLR$-invariant measure $\mu$,
  there is a $\kappa>0$ such that for every $f \in B_c(\mathbb{C})$, $\hat{f}\in L^{2+\kappa}(\HH,\mu)$.
\end{thm}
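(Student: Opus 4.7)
My plan is to follow the Athreya--Cheung--Masur proof of Theorem \ref{SVtr}, replacing the computations specific to $\mu_{MV}$ by estimates that hold uniformly for any $\SLR$-invariant measure. By linearity and positivity of the Siegel--Veech transform, and since every $f\in B_c(\mathbb{C})$ is dominated by $\|f\|_\infty \mathbf{1}_{B(0,R)}$ for some $R$, it is enough to treat $f=\mathbf{1}_{B(0,R)}$, in which case $\hat f(\omega)=N(\omega,R)$.

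The first step is purely geometric. Masur's counting bound, in the refined form due to Eskin and Masur, furnishes for every fixed $R$ a pointwise estimate
$$
N(\omega,R)\;\leq\;C_R\,\Phi(\omega),
$$
where $\Phi$ is a ``cusp excursion'' function built from the reciprocals of the lengths of the shortest $\MM$-independent families of saddle connections of $\omega$. This inequality is purely flat-geometric and does not involve any measure, so it transfers verbatim to our setting.

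The crux is then to show that $\Phi\in L^{2+\kappa}(\HH,\mu)$ for every $\SLR$-invariant $\mu$. By the Eskin--Mirzakhani classification, $\mu$ is supported on an affine invariant orbifold $\MM$, so it suffices to carry out the argument on $\MM$. My plan is to produce a Margulis-type function $V$ on a neighbourhood of the cusp of $\MM$ satisfying an integrated contraction inequality
$$
\int_0^{2\pi} V(g_t r_\theta \omega)\,d\theta \;\leq\; c\,V(\omega)+b
$$
with $c<1$ for some fixed, sufficiently large $t$. A standard Chebyshev-type iteration then upgrades this drift estimate to an $L^p(\mu)$ bound for $V$ with some $p>2$, and one checks that $\Phi$ is dominated by a small power of $V$, from which $\Phi\in L^{2+\kappa}(\mu)$ follows.

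The hard part will be the construction of $V$ on a general affine orbifold with an explicit contraction rate yielding an $L^p$-exponent strictly larger than~$2$. For $\mu_{MV}$ this is a direct computation in period coordinates, but in the general case one must import the uniform cusp-excursion bounds on affine invariant submanifolds developed by Nevo, R\"uhr and Weiss, whose machinery is already invoked in Section \ref{sec5} for the power-saving estimate, and verify that the contraction rate can be chosen independent of the particular measure $\mu$.
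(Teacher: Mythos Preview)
Your approach is genuinely different from the paper's, but there is a real gap at its core. A contraction inequality $\int_0^{2\pi} V(g_t r_\theta \omega)\,d\theta \le cV(\omega)+b$ with $c<1$, integrated against the $\SLR$-invariant measure $\mu$, yields only $V\in L^1(\mu)$; there is no ``standard Chebyshev-type iteration'' that upgrades this to $V\in L^p(\mu)$ for $p>2$. If instead you intend to use $V\in L^1$ and dominate $\Phi$ by $V^\theta$ (so that $\Phi\in L^{1/\theta}$), you need $\theta<\tfrac12$; but the only pointwise bound actually available, $N(\omega,R)\le C\,l(\omega)^{-(1+\delta)}$ from Eskin--Masur, combined with the fact that $l^{-s}\in L^1(\mu)$ only for $s<2$, fails by exactly this margin. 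Crossing the $L^2$ threshold requires exploiting the extra smallness of $\mu$ on loci with \emph{several} independent short saddle connections, and you have not indicated what function $V$ or $\Phi$ would encode this, nor why it would satisfy a drift inequality. The Nevo--R\"uhr--Weiss paper you invoke concerns effective equidistribution via spectral gap; it does not supply a height function with the required integrability.

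The paper's proof proceeds instead by a direct tail estimate $\sum_k \mu\{\hat f\ge k^{1/q}\}<\infty$ for some $q>2$, via a case analysis on the configuration of the shortest saddle connections. The two ingredients replacing the Masur--Veech-specific computations of Athreya--Cheung--Masur are Dozier's regularity theorem, bounding the $\mu$-volume of the locus with two $\MM$-independent saddle connections shorter than $\epsilon_1,\epsilon_2$ by $O(\epsilon_1^2\epsilon_2^2)$, and the multi-scale compactification of strata. The latter is used in the hardest case---when the shortest saddle connection is the core curve of a cylinder of height at most $\epsilon_0$---where the paper builds an adapted period-coordinate basis on a boundary chart (containing both the core curve $\gamma$ and a crossing curve $\eta$ confined to a thin rectangle) and integrates directly to obtain a \emph{cubic} volume bound $\mu(\Omega_3(k))=O(k^{-3/(q(1+\delta))})$. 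This third power, rather than the quadratic bound coming from a single short saddle connection, is precisely what pushes the integrability exponent past~$2$.
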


As it is discussed in the beginning of Section 2.2 of \cite{athreya2022counting}, this theorem extends to function of two variables. For a function $g \in B_c(\mathbb{R}^2)$, the Siegel-Veech transform is defined as
$$
\hat{g}(\omega)= \sum_{z_1, z_2 \in \Lambda_{\omega}^2} g(z_1, z_2).
$$

An easy corollary of the previous theorem is
\begin{cor}
  For every $\mu$ $\SLR$-invariant measure, there is a $\kappa'>0$ such that for every $h \in B_c(\mathbb{R}^2)$, $\hat{h}\in L^{1+\kappa'}(\HH,\mu)$.
\end{cor}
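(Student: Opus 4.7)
The plan is to reduce the two-variable Siegel--Veech transform to (the square of) a one-variable one, and then apply Theorem \ref{thm_SVtr2}.

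First, since $h \in B_c(\mathbb{R}^2)$, there exist a compact set $K \subset \mathbb{C}$ and a constant $C>0$ such that
$$
|h(z_1,z_2)| \leq C\, \mathbf{1}_K(z_1)\, \mathbf{1}_K(z_2)
$$
for all $(z_1,z_2) \in \mathbb{C}^2$. Setting $f = \mathbf{1}_K \in B_c(\mathbb{C})$, summing over $(z_1,z_2) \in \Lambda_\omega^2$ gives the pointwise bound
$$
|\hat h(\omega)| \leq C \sum_{z_1 \in \Lambda_\omega} \sum_{z_2 \in \Lambda_\omega} f(z_1) f(z_2) = C\, \hat f(\omega)^2.
$$

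Now apply Theorem \ref{thm_SVtr2} to $f$: there exists $\kappa>0$ (depending on $\mu$) such that $\hat f \in L^{2+\kappa}(\HH,\mu)$. Setting $\kappa' = \kappa/2 > 0$, we obtain
$$
\int_\HH |\hat h(\omega)|^{1+\kappa'} d\mu(\omega) \leq C^{1+\kappa'} \int_\HH \hat f(\omega)^{2+\kappa} d\mu(\omega) < \infty,
$$
so $\hat h \in L^{1+\kappa'}(\HH,\mu)$, which is the claim.

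There is essentially no obstacle here: the only technical point is the domination by an indicator function of a compact set, which is immediate from the definition of $B_c(\mathbb{R}^2)$. The work is all done in Theorem \ref{thm_SVtr2}; the corollary is a clean application of Cauchy--Schwarz-type reasoning disguised as the pointwise bound $\hat h \leq C \hat f^{\otimes 2}$.
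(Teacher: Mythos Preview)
Your proof is correct and is exactly the argument the paper has in mind: dominate $|h|$ by $C\,\mathbf{1}_K\otimes\mathbf{1}_K$, deduce the pointwise bound $|\hat h|\leq C\hat f^{\,2}$, and use $\hat f\in L^{2+\kappa}(\HH,\mu)$ from Theorem~\ref{thm_SVtr2} to conclude $\hat h\in L^{1+\kappa/2}(\HH,\mu)$. The paper does not spell this out but refers to the same reduction (see the remark before the corollary and the discussion in Subsection~\ref{recapproof}).
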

\subsection{Review of the proof of Theorem \ref{th1}}
\label{recapproof}

As our argument towards Theorem \ref{thm2} follow closely the proof of Theorem \ref{th1}, we will give a quick landscape on it and comment on the points which need to be adapted.

To begin, Athreya, Fairchild and Masur focus on partial counting functions $$
N^*_A(\omega, R)= N_A(\omega, R)-N_A(\omega, R/2).
$$
These functions count the number of pairs of saddle connections such that their period coordinates are in
$$
D_A(R, R/2):= \left\{ (z, w) \in \mathbb{C}^2: R/2 < |z| < R
, |w|<|z|, |w \wedge z| \leq A  \right\}
$$
Estimates on these function can be extended to $N_A(\omega,e^t)$ using geometric series argument with a suitable control on a upper bound on $N^*_A(\omega, R)$ given by Proposition 3.2 in their article.

Then they describe a set $$
R_A(\mathcal{T}) := \left\{ (z, w) \in \mathbb{C}^2, 1/2 \leq \Imm(z) \leq 1, |\Ree(z)| \leq \Imm(z), |\Imm(w)|\leq \Imm(z), |w \wedge z| \leq A \right\}
$$
 called the fibered trapezoid, which have the property that its characteristic function $h_A$ satisfies
\begin{equation}
  \label{estim}
  |N^*_A(\omega, R) - \pi e^{2t}(A_t \hat{h}_A)(\omega)| = |m_t(\omega) + \sum_{i=1}^4 e^i_t(\omega)|
\end{equation}

where $A_t$ is the following averaging operator
$$
A_t(h)(p)= \frac{1}{2 \pi} \int_0^{2 \pi} h(g_t r_{\theta}p) d\theta
$$
and $m_t$ is called \emph{main term} and the $e^i_t$ \emph{error terms}.

These functions are defined as the difference of the two terms in the left hand side of equation \eqref{estim} estimated on various loci. The definitions of these loci are the following

\begin{itemize}
  \item \emph{Main part} $$
  M_t = \left\{ (z, w)\in D_A\left( \sqrt{\frac{cosh(2t)}{2}}, e^t \right) : |w|< |z|(1+e^{-4t})^{-1/2} \right\},
  $$
  \item \emph{Bottom of the trapezoid }$$
  E^1_t = D_A \left( e^t/2, \sqrt{\frac{cosh(2t)}{2}} \right),
  $$
  \item \emph{The norm of $w$ is greater than $|z|(1+e^{-4t})^{-1/2}$}$$
  E^2_t = \left\{ (z, w)\in D_A\left( \sqrt{\frac{cosh(2t)}{2}}, e^t \right) :|w|> |z|(1+e^{-4t})^{-1/2} \right\},
  $$
  \item \emph{Top of the trapezoid }$$
  E^3_t = \left\{ (z, w)\in \mathbb{C}^2 : (A_t h_A)(z, w)>0,|z|>e^t \right\},
  $$
  \item \emph{The averaging operator is positive but} $(z,w) \notin D_A(e^t/2, e^t)$
  $$
  E^4_t = \left\{ (z, w)\in \mathbb{C}^2 : (A_t h_A)(z, w)>0, e^t/2<|z|<e^t,|w|>|z| \right\}.
  $$
\end{itemize}
After that the error functions are defined as $$
e^i_t(\omega)= (\chi_{E^i_t}\cdot (\chi_{D_A(e^t/2, e^t)}-\pi e^{2t}(A_t h_A)))^{SV}(\omega)
$$
  and
  $$m_t(\omega)=(\chi_{M_t}\cdot (\chi_{D_A(e^t/2, e^t)}-\pi e^{2t}(A_t h_A)))^{SV}(\omega).$$
where $\chi_{\cdot}$ are characteristic functions (this notation will be used all along this article).

To control the limit of $(A_t \hat{h}_A)$ as $t \to \infty$, they use Nevo ergodic theorem \cite{NevoErgo} stating that
\begin{thm}
  \label{thm_Nevo}
  Suppose $\mu$ is an ergodic $\SLR$-invariant probability measure on $\HH$. Assume $f \in L^{1+\kappa}(\HH,\mu)$ for some $\kappa>0$, and that $f$ is $K$-finite, that is, $f_{\theta}(\omega) := f(r_{\theta}\omega)$ the span of the functions $\{ f_{\theta} : \theta \in [0,2 \pi[ \}$ is finite-dimensional. Let $\eta \in C_c(\mathbb{R})$
  be a continuous non negative bump function with compact support and of unit integral. Then for $\mu$-almost every $\omega \in \HH$,
  $$
\underset{t \to \infty}{\lim} \int_{-\infty}^{\infty} \eta(t-s)(A_s f)(\omega)ds = \int_{\HH}f d\mu
  $$
\end{thm}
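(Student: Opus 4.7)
The plan is to combine the spectral theory of $\SLR$ unitary representations with a maximal inequality argument. Set $f_0 := f - \int_{\HH} f\, d\mu$; by ergodicity it suffices to show that the time-smoothed averages $\int \eta(t-s)(A_s f_0)(\omega)\, ds$ converge to $0$ for $\mu$-almost every $\omega$. The $K$-finiteness hypothesis ensures that $f_0$ lives in a finite sum of $K$-isotypic components, which reduces everything to a finite-dimensional $K$-invariant subspace of the regular representation, and in particular allows us to interpolate between $L^{1+\kappa}$ and $L^\infty$ on the effective support of the spectral data.

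Next I would establish convergence in norm. The unitary $\SLR$-representation on $L^2_0(\HH,\mu)$ decomposes as a direct integral of irreducibles (principal, discrete, or complementary series); on each such irreducible the sphere average $A_s$ acts on any $K$-invariant vector by multiplication by the spherical function $\phi_\pi(g_s)$, and $\phi_\pi(g_s) \to 0$ as $s \to \infty$ for every non-trivial $\pi$. Integrating against $\eta$ and passing to the direct integral, this forces $\int \eta(t-s) A_s f_0\, ds \to 0$ in $L^2$. Combined with the $L^\infty$ bound available on the finite-dimensional $K$-isotypic piece, interpolation extends the norm convergence to $L^{1+\kappa}$.

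To upgrade norm convergence to pointwise convergence, one needs a maximal inequality: the operator
\[
f \mapsto \sup_{t \geq 0} \left| \int_{-\infty}^{\infty} \eta(t-s)(A_s f)(\omega)\, ds \right|
\]
must be bounded on $L^{1+\kappa}(\HH,\mu)$. This is precisely the point where the Kunze-Stein phenomenon for $\SLR$ enters: it provides the convolution estimates $L^p \ast L^2 \subset L^2$ for $1 \leq p < 2$, and combined with the decay of the spherical functions it yields a sharp $L^2$ maximal bound; interpolation with the trivial $L^\infty$ bound then gives the desired $L^{1+\kappa}$ estimate. Given both norm convergence and the maximal inequality, the Banach principle promotes almost-everywhere convergence on a dense subspace (say smooth bump functions) to almost-everywhere convergence on all of $L^{1+\kappa}$, yielding the theorem.

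The main obstacle is handling the possible presence of complementary series representations in the decomposition of a general ergodic $\SLR$-invariant $\mu$, since their matrix coefficients decay only as $e^{-(1-s)t}$ with $s$ possibly close to $1$, which is too slow to give a maximal inequality directly from pointwise estimates on $\phi_\pi(g_s)$. The smoothing against $\eta$ (absent in the raw $A_t f$ theorem) is the technical device that absorbs the slowest decay, and together with the $K$-finiteness assumption it is exactly what makes the Kunze-Stein/interpolation argument close; this is the content of Nevo's work \cite{NevoErgo} which we cite.
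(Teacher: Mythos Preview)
The paper does not prove this theorem: it is quoted as Nevo's ergodic theorem with a citation to \cite{NevoErgo} and used as a black box. There is therefore no proof in the paper to compare your proposal against.

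That said, your sketch does follow the architecture of Nevo's actual argument: spectral decomposition of the $\SLR$-representation on $L^2_0$, decay of spherical functions for norm convergence, a maximal inequality obtained via the Kunze--Stein phenomenon, and the Banach principle to pass from a dense subspace to all of $L^{1+\kappa}$. One organisational imprecision is worth flagging: in your middle paragraph you claim that ``interpolation extends the norm convergence to $L^{1+\kappa}$,'' but $f$ is only assumed to lie in $L^{1+\kappa}$ and need not be in $L^2$ at all, so the $L^2$ spectral theory cannot be applied to $f_0$ directly. In Nevo's scheme the $L^2$ argument is used only on the dense subclass (bounded $K$-finite functions, say) on which one verifies pointwise convergence, and it is the maximal inequality---not interpolation of norm convergence---that transfers this to general $f \in L^{1+\kappa}$. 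Similarly, the phrase ``$L^\infty$ bound available on the finite-dimensional $K$-isotypic piece'' is misleading, since $K$-finiteness gives no $L^\infty$ control on $f$ itself; the relevant $L^\infty$ bound is the trivial one on the averaging operator. Your final paragraph with the Banach principle already has the correct logic, so these are matters of presentation rather than genuine gaps.
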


Using this theorem they get a result on the limit of the averaging operator.

\begin{thm}[Theorem 2.2 of \cite{athreya2022counting}]
  \label{thm_LimAver}
  For $\phi \in C_c(\mathbb{C}^2)$, for $\mu_{MV}$-almost every $\omega \in \HH$, the circle average of $\hat{\phi}$ converge
  $$
  \underset{t \to \infty}{\lim} A_t \hat{\phi}(\omega)= \int_{\HH} \hat{\phi} d \mu_{MV}
  $$
\end{thm}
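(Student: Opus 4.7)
The plan is to apply Nevo's ergodic theorem (Theorem \ref{thm_Nevo}) to $\hat\phi$ and then upgrade the resulting $\eta$-averaged convergence to a pointwise statement. First, I verify the integrability hypothesis: since $\phi \in C_c(\mathbb{C}^2) \subset B_c(\mathbb{C}^2)$, the corollary of Theorem \ref{thm_SVtr2} applied to $\mu_{MV}$ gives $\hat\phi \in L^{1+\kappa'}(\HH,\mu_{MV})$ for some $\kappa'>0$, which is more than enough for Nevo.

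Second, I would reduce to the $K$-finite case by expanding $\phi$ in angular Fourier modes along the $K$-action. Setting
$$
\phi_n(z_1,z_2) := \frac{1}{2\pi}\int_0^{2\pi}\phi(r_{-\theta}z_1,r_{-\theta}z_2)e^{in\theta}\,d\theta,
$$
the Siegel–Veech transform $\hat\phi_n$ is a $K$-eigenvector and hence trivially $K$-finite. After first approximating $\phi$ by a smooth compactly supported function (the Siegel–Veech remainder being controlled in $L^{1+\kappa'}$ by the same corollary), the partial sums $\sum_{|n|\le N}\phi_n$ converge uniformly to $\phi$ and the Fourier coefficients decay rapidly in $n$. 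Applying Theorem \ref{thm_Nevo} to each $\hat\phi_n$ yields, for every fixed bump $\eta$ and $\mu_{MV}$-almost every $\omega$,
$$
\lim_{t\to\infty}\int_{-\infty}^{\infty}\eta(t-s)(A_s\hat\phi_n)(\omega)\,ds=\int_{\HH}\hat\phi_n\,d\mu_{MV},
$$
and summing over $|n|\le N$, then letting $N\to\infty$ using the rapid decay and the $L^{1+\kappa'}$-control, gives the analogous $\eta$-averaged conclusion for $\hat\phi$ itself.

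Third, to remove the smoothing by $\eta$, I would choose a countable family of bumps $\eta_k$ with supports shrinking to a point and intersect the corresponding $\mu_{MV}$-full-measure sets to obtain a single full-measure set $\HH_0$ on which the smoothed limit holds simultaneously for every $\eta_k$. On $\HH_0$ the map $t\mapsto A_t\hat\phi(\omega)$ admits a uniform modulus of continuity, inherited from the uniform continuity of $\phi$ on its compact support and the smoothness of $g_tr_\theta$ in $(t,\theta)$; combining this with the shrinking-bump convergence then forces
$$
\lim_{t\to\infty}A_t\hat\phi(\omega)=\int_{\HH}\hat\phi\,d\mu_{MV}.
$$

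The most delicate step is the final bump-removal: one needs the modulus of continuity of $t\mapsto A_t\hat\phi(\omega)$ to be controlled \emph{uniformly} on $[0,\infty)$, despite the fact that the number of pairs contributing to the Siegel–Veech sum can grow with $t$. I expect this to be handled by bounding the time derivative of a typical contribution by the modulus of continuity of $\phi$ times the Jacobian distortion of $g_tr_\theta$ on the compact support of $\phi$, summed via a positive majorant whose $L^{2+\kappa}$-bound is precisely the strengthened content of Theorem \ref{thm_SVtr2} beyond what Nevo alone requires.
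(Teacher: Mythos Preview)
The paper does not supply its own proof of this statement: Theorem~\ref{thm_LimAver} is quoted from \cite{athreya2022counting}, and for the extension (Theorem~\ref{cor_LimAver}) the paper simply says the argument is identical once $\hat\phi\in L^{1+\kappa}(\HH,\mu)$ is known. So there is no in-paper argument to compare against line by line; what can be assessed is whether your outline actually closes.

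Your first two steps are sound. The $L^{1+\kappa'}$ integrability is exactly the corollary you cite, and passing to $K$-eigenfunctions via angular Fourier modes is the standard device for feeding a non-$K$-finite Siegel--Veech transform into Theorem~\ref{thm_Nevo}.

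The gap is in your third step. The uniform modulus of continuity you invoke amounts to
\[
|A_t\hat\phi(\omega)-A_s\hat\phi(\omega)| \;\le\; \rho_\phi(|t-s|)\cdot A_s\hat\chi_K(\omega),
\]
with $\chi_K$ the indicator of a fixed compact set in $\mathbb{C}^2$ slightly larger than $\mathrm{supp}\,\phi$, and $\rho_\phi$ a modulus of continuity. For this to yield uniform continuity on $[0,\infty)$ you need $\sup_{s\ge 0}A_s\hat\chi_K(\omega)<\infty$ almost surely. The $L^{2+\kappa}$ bound of Theorem~\ref{thm_SVtr2} is an \emph{integrated} statement and does not produce a pointwise-in-time supremum; the only available pointwise bound is Theorem~\ref{thm_IntSys}, i.e.\ $\sup_t A_t(l^{-\alpha})(\omega)<\infty$ for $\alpha<2$. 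But a \emph{pair}-counting majorant satisfies only $\hat\chi_K\le C\,l(\cdot)^{-(2+2\delta)}$ via Lemma~\ref{lem_ContingShortest}, which lies on the wrong side of the exponent~$2$. So the heuristic you offer (``summed via a positive majorant whose $L^{2+\kappa}$-bound \dots'') does not deliver the needed control.

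The clean way to remove the bump $\eta$ is a monotone sandwich rather than a continuity estimate. For $\phi\ge 0$ (otherwise split into positive and negative parts), set
\[
\phi^{+}_\delta(z,w)=\sup_{|u|\le\delta}\phi(g_uz,g_uw),\qquad
\phi^{-}_\delta(z,w)=\inf_{|u|\le\delta}\phi(g_uz,g_uw),
\]
both in $C_c(\mathbb{C}^2)$. From $A_t\hat\phi=A_s\widehat{\phi\circ g_{t-s}}$ one gets, for every $s$ with $|t-s|\le\delta$, the inequalities $A_s\hat\phi^-_\delta\le A_t\hat\phi\le A_s\hat\phi^+_\delta$. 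Integrating in $s$ against any bump $\eta$ supported in $[-\delta,\delta]$ yields
\[
\int\eta(t-s)\,A_s\hat\phi^-_\delta(\omega)\,ds \;\le\; A_t\hat\phi(\omega) \;\le\; \int\eta(t-s)\,A_s\hat\phi^+_\delta(\omega)\,ds.
\]
Now run your steps 1--2 with $\phi^\pm_\delta$ in place of $\phi$: the $\eta$-smoothed Nevo limit equals $\int_\HH\hat\phi^\pm_\delta\,d\mu_{MV}$, and letting $\delta\to 0$ (dominated convergence with dominator $\|\phi\|_\infty\,\hat\chi_K\in L^{1+\kappa'}$) forces both bounds to $\int_\HH\hat\phi\,d\mu_{MV}$. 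This bypasses any need for uniform-in-$t$ boundedness of circle averages of the pair majorant.
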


To adapt this theorem to our case of an arbitrary $\SLR$-invariant ergodic measure $\mu$, we need to show that there is a $\kappa>0$ such that for any $f \in B_c(X)$, $f \in L^{2+2\kappa}(\HH,\mu)$ which will imply that for any $h \in B_c(\mathbb{C}^2)$, $\hat{h} \in L^{1+\kappa}(\HH,\mu)$.
For the Masur-Veech measure, this statement is the main result of \cite{athreya2017siegel}. In our setting, this is the main content of Section \ref{sec2}.

Once this is done, taking a bump function $g_\epsilon$ (whose existence is established in Lemma 3.4 of \cite{athreya2022counting}) such that $$
|A_t \hat{h}_A(\omega)-A_t \hat{g}_{\epsilon}(\omega)| \leq \epsilon
$$
 and
 $$
 | \int_{\HH} \hat{g}_{\epsilon}-\hat{h}_A d \mu| \leq \epsilon
 $$
we get that $$
|A_t \hat{h}_A(\omega)-\int_{\HH}\hat{h}_A d\mu| \leq |A_t \hat{h}_A(\omega)-A_t \hat{g}_{\epsilon}(\omega)| + |A_t \hat{g}_{\epsilon}(\omega) - \int_{\HH} \hat{g}_{\epsilon} d \mu| + | \int_{\HH} \hat{g}_{\epsilon}-\hat{h}_A d\mu| \leq 3 \epsilon
$$
for $t$ big enough, using the extension for $\mu$ of Theorem \ref{thm_LimAver} on $A_t \hat{g}_\epsilon$ for the second term.

It remains to prove that for almost every $\omega$,
\begin{equation}
  \label{estimMt}
  |m_t(\omega)| = o(e^{2t})
\end{equation}

and for $i=1, 2, 3, 4$
\begin{equation}
  \label{estimEt}
  |e^i_t(\omega)|=o(e^{2t}).
\end{equation}

For this sake, in the case of the Masur-Veech measure, Athreya, Fairchild and Masur started by estimating the volume of the set with two non-homologous small saddle connections (cf. Lemma 5.2 of \cite{athreya2022counting}). In our current setting this lemma has an equivalent form thanks to Dozier \cite{dozier2020measure}.

Next, Athreya, Fairchild and Masur have two lemmas (cf. Lemmas 5.3 and 5.4 of \cite{athreya2022counting}) bounding the function  $N(\omega, L)$.
 As they show in their paper, these lemmas essentially follow paragraph 3.6.2 and 3.6.3 of \cite{athreya2017siegel} and \cite{eskin_masur_2001}.
Since they are stated for every $\omega \in \HH$, we don't need to modify them here.


  Afterwards, Athreya, Fairchild and Masur bound (cf. Lemma 5.5 of \cite{athreya2022counting}) the Masur-Veech integral of a counting function in the thin part of the moduli space (where some saddle connections are small).
  This mostly uses the estimate of Lemma 5.2 (which as it was previously said is extended by Dozier's result \cite{dozier2020measure} and can be used to have similar result of Lemma 5.2) except a couple of technical terms should be adapted to be extended to any $\SLR$-invariant measure.

 Furthermore, Lemmas 5.6 and 5.7 of \cite{athreya2022counting} are technical lemmas of a geometric nature,
  controlling the orbit of a point in $\mathbb{C}$ under geodesic flow and rotation: this means that we can still use it.

Moreover, they need to control the Masur-Veech volume of different loci on the thick part of the stratum related to the error terms (cf. Lemma 5.9 of \cite{athreya2022counting}).
In the case of an arbitrary $\SLR$-invariant ergodic measure, we shall need to add extra arguments. This will be done in Section \ref{sec3}.

 In the end, using all these facts, we can follow in a straightforward way the same computation of Athreya, Fairchild and Masur in Section 5.9 of \cite{athreya2022counting} to get the bounds on the main term \eqref{estimMt} and the error terms \eqref{estimEt} and \emph{a fortiori } combining the different limits the desired result.

In summary, our task for proving the quadratic asymptotics in Theorem \ref{thm2} is reduced to show the integrability of the Siegel-Veech transform with respect to any $\SLR$-invariant ergodic measure (cf. Theorem \ref{thm_SVtr2}) which will be done in Section \ref{sec2} and two adapt three lemmas (Lemma 5.2, 5.5 and 5.9 of \cite{athreya2022counting}) which will be the content of Section \ref{sec3}.

\subsection{Compactification of the moduli space}

  A crucial tool to show Theorem \ref{thm_SVtr2} is the multi-scale compactification of strata introduced by Bainbridge, Chen, Grushevsky and Möller \cite{Bainbridge_2018}.

  Indeed, Dozier uses it to bound the volume of a set with a family of $\MM$-independent small saddle connections.
  Similarly, we want an estimate on the volume of the set where two saddle connections are small and in some precise position.
   Namely, one is a circumference curve of a cylinder and the other one cross the same cylinder.
    This set appears in the proof of the integrability of the Siegel-Veech transform, and in some sens, the control of his measure is one of the novelty of this paper.

  Let us recall quickly the compactification process.
  A multi-scale differential is the data of
\begin{itemize}
  \item A nodal Riemann surface $M$,
  \item A graph, where each vertex correspond to a component of the surface $M$, and the edge to a node between the surfaces,
  \item Half-edge recording in which component the $0$ of the differential are,
  \item A level function $l$ assigning a non positive integer to each vertex. We take it to be surjective in $[-N, 0]\cap \mathbb{Z}$ for some $N$. The union of component with the same level, that is $l^{-1}(i)$ for a given $i$ is called a \emph{level subsurface} and denote by $X^{(i)}$,
  \item A positive integer $b_e$ to each vertical edge which record the cone angle,
  \item A collection of meromorphic differentials on each component of the surface which are consistent with the previous data (we won't give here the meaning of this term, but we refer the reader to \cite{Bainbridge_2018} for full definition),
  \item And a prong-matching record, giving the identification of the horizontal direction between two sides of a node.
\end{itemize}

The compactified stratum is denoted $\bar{\HH}$. On it, special neighborhoods are handy for computation. They are described by the following definition and theorem.

\begin{dfnt}
  A connected, open subset $Q \subset \bar{\HH}$ is said to be a \emph{period coordinate chart} if it admits an injective map to $\mathbb{C}^n$ that is locally linear with respect to the period coordinates.
\end{dfnt}

Fix $\bar{X}$ in the boundary, there is a complex-analytic coordinate in which we can define a good neighborhood.
\begin{itemize}
  \item In each component, there are \emph{moduli parameters} $s_i \in \mathbb{C}$ that we take small, that is $0 < |s_i| < \epsilon$,
  \item For each node, there is a \emph{smoothing parameter} $t \in \mathbb{C}$. We also take them with a restriction on the norm $0 < |t| < \epsilon$,
  \item If $t$ is a horizontal node parameter, then we consider a restriction of the smoothing parameter $$
arg(t) \in (\alpha, \alpha + \pi /4)
  $$
  where we choose finitely many $\alpha$ such that the family of such intervals cover all the circle,
  \item If $t_i$ is a scaling parameter for level $i$, with associate integer $a_i$, we find connected interval condition on $arg(t_i)$ such that $arg(t_i^{a_i})$ satisfies the same condition as in the previous point.
\end{itemize}

Dozier show in Lemma 3.6 of \cite{dozier2020measure} that:

\begin{thm}
  \label{NeiPerCa}
  The previously defined neighborhoods are period coordinate charts, for $\epsilon$ small enough.
\end{thm}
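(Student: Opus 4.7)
The plan is to produce explicit analytic formulas for the period coordinates in terms of the parameters $(s_i, t, t_i)$ defining the neighborhood, and then check that the resulting map to $\mathbb{C}^n$ is both injective on the chart and linear, after a simple change of variables, with respect to the period coordinates on the smooth locus.

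First I would fix a basis of $H_1(X, \Sigma; \mathbb{Z})$ for a smooth surface $X$ near $\bar{X}$, adapted to the level structure of $\bar{X}$. Such a basis decomposes into three types: cycles supported on a single level subsurface $X^{(i)}$, vanishing cycles around each node of $\bar{X}$, and cycles crossing each (smoothed) node. For each type of cycle the standard plumbing construction yields formulas for the period of the smoothed differential: periods of type (i) on level $i$ are $t_i^{a_i}$ times linear functions of the moduli $s_j$ on that level, plus higher-order corrections in the smoothing parameters; periods of vanishing cycles are essentially the smoothing parameter $t$ itself; and periods of crossing cycles have the form $c_1 \log t + c_2 + O(|t|)$, with $c_1$ the residue at the node.

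Second I would invoke the argument restrictions. Restricting $\arg t$ for a horizontal node to an interval of length $\pi/4$ singles out a single-valued branch of $\log t$, making the crossing-cycle periods well-defined analytic functions; similarly the restriction on $\arg t_i^{a_i}$ picks out a single-valued $a_i$-th root, so that the scaling factors are unambiguous. Injectivity can then be recovered inductively: first extract the smoothing parameters from the vanishing-cycle periods, then the scaling factors from the crossing-cycle periods, and finally the moduli $s_i$ from the type (i) periods after rescaling. Local linearity is automatic because all the nonlinearity has been absorbed into the change of variables $(t, t_i) \mapsto (\log t, t_i^{a_i})$, after which the period map becomes $\mathbb{C}$-linear in the new coordinates.

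The main obstacle will be controlling the higher-order corrections in the plumbing expansions. These are power series in the smoothing parameters whose coefficients themselves depend on the moduli, and one must check that for $\epsilon$ small enough they cannot destroy either injectivity or local linearity. This is most delicate at vertical nodes, where the prong-matching data enters into the constant term $c_2$ of the crossing-cycle period and the relative scales $t_i^{a_i}$ must be matched consistently across adjacent levels; ensuring that the chart structure glues together across the whole boundary stratum is where the careful bookkeeping of the multi-scale compactification of \cite{Bainbridge_2018} really pays off.
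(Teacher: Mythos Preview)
The paper does not prove this statement itself: it is quoted as Lemma~3.6 of Dozier \cite{dozier2020measure}, with no argument given beyond the citation. So there is nothing in the paper to compare your sketch against directly.

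Your outline is in the right spirit --- adapted homology basis, plumbing formulas for the periods, argument restrictions to kill multivaluedness, injectivity recovered level by level --- and this is roughly how the argument in \cite{dozier2020measure} is organized. One point deserves correction, however. The definition of \emph{period coordinate chart} asks for an injective map to $\mathbb{C}^n$ that is locally linear in period coordinates; the natural candidate is the period map itself, in which case local linearity is tautological and the only substantive claim is injectivity on the restricted neighborhood. Your proposed change of variables $(t,t_i)\mapsto(\log t,\, t_i^{a_i})$ suggests you are instead taking the plumbing parameters as the chart map and then trying to make \emph{them} linear in the periods. But even after that substitution the relations are not linear: level-$i$ periods scale like $t_i^{a_i}$ times analytic functions of the moduli $s_j$, crossing-cycle periods involve $\log t$, and vanishing-cycle periods are essentially $t$ itself, so no single substitution linearizes all three simultaneously. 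The cleaner organization is to take the periods themselves as the map to $\mathbb{C}^n$ and spend all the effort on injectivity, which is precisely where the argument restrictions and the level-by-level unwinding you describe actually do their work.
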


\subsection{Ordering of level subsurfaces}

The ordering given by the level function might not be a total order. To structure the surface, we should make other choices.

Let $\mathcal{S}$ be the set made of level subsurfaces and degenerating cylinders. Dozier defined a function $size_{\omega}(\cdot):\mathcal{S} \to \mathbb{R}$ for every $\omega \in V$ with $V$ a neighborhood as in theorem \ref{NeiPerCa}.

Then, if $\mathcal{O}$ is the set of ordering on $\mathcal{S}$ that restricts to the ordering given by the level function and have the property that if $C$ is a degenerating cylinder and $X^i$ is the level subsurface at which the circumference curve of $C$ lies, then $C \succ X^i$.

 We say that $\succ \in \mathcal{O}$ is \emph{consistent} with $X \in V$ if $$
size_{\omega}(Y_1) \geq size_{\omega}(Y_2) \implies Y_1 \succ Y_2.
$$

Finally, we call a degenerating cylinder $\succ$\emph{-wide} if it is $\succ$-greater than any level subsurface.

\subsection{Delaunay triangulation}

One last tool that we will use in Section \ref{sec3} is Delaunay triangulation. Let's consider the \emph{Voronoi decomposition} of a translation surface with each cell centered around singular point. Then the dual of this decomposition is the \emph{Delaunay decomposition}. Further decompositions are called \emph{Delaunay triangulation}. Refer to \cite{masur_smilie} for complete definition.

One key point of these triangulations is the so-called efficiency property
\begin{thm}
  \label{effpath}
  Let $S$ be a translation surface and $T$ a Delaunay triangulation.
  Then every saddle connection $\beta$ is homotopically equivalent to a path $P(\beta)$ in the Delaunay triangulation whose length satisfies $$
  |P(\beta)| \leq \sqrt{10}|\beta|.
  $$
  \begin{proof}
    See Lemma 2.3 of \cite{athreya2017siegel}.
  \end{proof}
\end{thm}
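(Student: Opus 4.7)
The plan is to build the comparison path $P(\beta)$ triangle by triangle: in each Delaunay triangle that $\beta$ crosses, I would replace the straight segment of $\beta$ inside that triangle by a detour along two of the triangle's edges. After a generic perturbation (or by taking the geodesic representative), $\beta$ meets the $1$-skeleton of the Delaunay triangulation transversally in finitely many points $p_0,\ldots,p_n$, with $p_0$ and $p_n$ the singular endpoints. This cuts $\beta$ into sub-segments $\beta_i=[p_{i-1},p_i]$, each contained in the closure of a single Delaunay triangle $T_i$, entering through one edge and exiting through another.

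In each triangle $T_i$ let $v_i$ denote the vertex common to the entry and exit edges of $\beta_i$, and define $Q_i$ to be the two-edge path $p_{i-1}\to v_i\to p_i$ running along $\partial T_i$. The concatenation $Q_1\cdots Q_n$ is homotopic to $\beta$ rel endpoints, since each $Q_i$ is homotopic to $\beta_i$ inside the closed disc $T_i$. Adjacent pieces $Q_i$ and $Q_{i+1}$ meet at $p_i$ on the shared edge of $T_i$ and $T_{i+1}$; after cancelling the obvious backtracks that occur when $v_i=v_{i+1}$, the resulting path $P(\beta)$ lies in the $1$-skeleton, runs from $p_0$ to $p_n$ through the Delaunay vertices $v_1,\ldots,v_n$, and has total length at most $\sum_i |Q_i|$.

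The heart of the matter is then the triangle-wise inequality $|Q_i|\leq \sqrt{10}\,|\beta_i|$. This is where the Delaunay property enters essentially: the circumscribed disc of $T_i$ is empty of singularities, which forces a lower bound on the angle at $v_i$ and hence an upper bound on $|p_{i-1}v_i|+|v_i p_i|$ in terms of $|p_{i-1}p_i|$. Concretely I would place $v_i$ at the origin, parametrize $p_{i-1}$ and $p_i$ on the two emanating edges, and extremize $|Q_i|/|\beta_i|$ subject to both edges being chords of the empty circumdisc of $T_i$; by scaling and rotational symmetry the extremization reduces to a one-parameter problem whose maximum is $\sqrt{10}$. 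Summing over $i$ gives the claimed bound. The main obstacle is precisely this last extremization, where the sharp constant must be teased out of the Delaunay angle condition; the rest of the argument (transversality, local replacement, homotopy, summation) is essentially bookkeeping.
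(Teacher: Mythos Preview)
The paper does not supply its own proof here; it cites Lemma~2.3 of Athreya--Cheung--Masur. Your construction of the edge-path $P(\beta)$ is fine and does yield a homotopic path in the $1$-skeleton. The gap is in the triangle-wise inequality $|Q_i|\le\sqrt{10}\,|\beta_i|$, which is false. The Delaunay empty-circumdisc condition does \emph{not} give a lower bound on the angle at $v_i$: a Delaunay triangle can be arbitrarily thin at any vertex, provided no fourth singularity lies in its circumdisc. Concretely, if the angle at $v_i$ is $\varepsilon$ and $p_{i-1},p_i$ sit at common distance $a$ from $v_i$ along the two incident edges, then $|Q_i|=2a$ while $|\beta_i|=2a\sin(\varepsilon/2)$, so $|Q_i|/|\beta_i|=1/\sin(\varepsilon/2)\to\infty$ as $\varepsilon\to 0$. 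Your proposed extremization ``subject to both edges being chords of the empty circumdisc of $T_i$'' is vacuous, since every edge of a triangle is automatically a chord of its own circumcircle; nothing in your setup prevents this blow-up.

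The Delaunay hypothesis does enter the actual argument, but not through an angle bound on a single triangle. The relevant consequence is that the open circumdisc of each $T_i$ contains no singularity, so the saddle connection $\beta$---whose endpoints are singularities---must traverse a full chord of that disc whenever it meets $T_i$. The comparison one needs is between Delaunay edge lengths and these chord-length portions of $\beta$ (which can be much longer than the individual $\beta_i$), together with some control on how the circumdiscs overlap as $i$ varies. Your local inequality cannot be salvaged without bringing in this global information about $\beta$.
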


\subsection{Acknowledgements}
 I would like to thank Benjamin Dozier for his crucial remarks which greatly improve this paper. I am thankful to Jon Chaika for pointing me out the result of Dozier on convergence of Siegel-Veech constants. I am also grateful to Jayadev Athreya, Samantha Fairchild and Howard Masur for inspiring discussion at CIRM during the Combinatorics, Dynamics and Geometry on Moduli Spaces conference. Finally, I would like to thank Carlos Matheus for introducing me to the subject and advising me all along.

\section{Siegel-Veech transforms are in $L^{2+2\kappa}(\HH,\mu)$}
\label{sec2}

In this section, we now prove Theorem \ref{thm_SVtr2}.

We use the same strategy as in \cite{athreya2017siegel} by starting with the characteristic function of the disc of radius a small $\epsilon_0$ centered at $0$.

\begin{thm}
  \label{thm_SV_epsilon}
  Let $\epsilon_0 > 0$ and let $f$ be the characteristic function of the disc of radius $\epsilon_0$ centered at $0$. Then there is a $\kappa>0$ such that $\hat{f}$ is in $L^{2+2\kappa}(\HH,\mu)$.
\end{thm}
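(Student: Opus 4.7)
The plan is to adapt the strategy of Athreya, Cheung, and Masur \cite{athreya2017siegel}, replacing their Masur--Veech volume estimates on the thin part with analogous bounds for an arbitrary $\SLR$-invariant measure $\mu$, using the multi-scale compactification of Bainbridge--Chen--Grushevsky--Möller and the measure-theoretic tools developed by Dozier \cite{dozier2020measure}. Observe first that $\hat{f}(\omega)$ is exactly the number $N(\omega,\epsilon_0)$ of saddle connections of length at most $\epsilon_0$, so the task reduces to proving
$$
\int_{\HH} N(\omega,\epsilon_0)^{2+2\kappa}\, d\mu(\omega) < \infty.
$$
On the thick part $\{\omega : \mathrm{sys}(\omega) \geq \epsilon_0\}$ the integrand is bounded by a constant depending only on the stratum, so this contribution is harmless. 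The integrability problem is therefore localized to a neighborhood of the boundary $\partial\overline{\MM}$.

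To treat the thin part, I would cover $\overline{\MM}$ by finitely many period coordinate neighborhoods $V_j$ as in \cref{NeiPerCa}, one around each boundary stratum. In such a chart, $\MM \cap V_j$ is cut out by linear equations in the moduli, smoothing, and scaling parameters, so $\mu$ is (up to a bounded density) Lebesgue measure on this linear subspace. Applying the Delaunay triangulation together with the efficient path property (\cref{effpath}), every saddle connection of length $\leq \epsilon_0$ is realized by a path of length $\leq \sqrt{10}\,\epsilon_0$ in the Delaunay 1-skeleton. Hence $N(\omega,\epsilon_0)$ is controlled by the number of short Delaunay edges, which is uniformly bounded by a topological constant; stratifying $V_j$ by the combinatorial type of the short-edge configuration reduces the estimate to finitely many cases.

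The core step is to bound, for each combinatorial type, the $\mu$-measure of the locus where every saddle connection in a prescribed collection has length $\leq \epsilon$. When the prescribed saddle connections are $\MM$-independent, Dozier's extension of the multi-scale volume estimate gives a bound of order $\epsilon^{2k}$, where $k$ is their number, which is the direct analogue of the estimate used in \cite{athreya2017siegel}. The genuinely new difficulty is that, for a proper affine submanifold $\MM$, several short saddle connections may be $\MM$-dependent. The paradigmatic and in fact controlling case, which is the one flagged by the author before the theorem, is a degenerating cylinder whose circumference curve $\gamma_1$ and a crossing saddle connection $\gamma_2$ are forced to satisfy a linear relation on $T_\omega\MM$. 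For this situation I would prove a separate estimate, working directly in the coordinates of \cref{NeiPerCa}: the smoothing parameter of the vanishing node controls $|z_{\gamma_1}|$, while the scaling and moduli parameters of the relevant level controls $|z_{\gamma_2}|$; intersecting this with the linear equations defining $\MM$ and integrating out the free directions yields the needed extra power of $\epsilon$.

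Once every combinatorial type has been treated in this way, one combines the measure estimates with the uniform pointwise bound on $N(\omega,\epsilon_0)$ (per configuration) in a layer-cake fashion, choosing $\kappa>0$ small enough so that the worst exponent arising from the cylinder-plus-crossing case still produces a convergent integral. The main obstacle, and the only point where the proof departs essentially from the Masur--Veech argument, is precisely the cylinder-plus-crossing measure estimate: this is the exact place where $\MM$-independence breaks down and Dozier's off-the-shelf bound does not apply, so a direct analysis in multi-scale coordinates together with the linear equations defining $\MM$ is required.
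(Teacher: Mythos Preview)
Your proposal is in the right spirit—use Dozier's measure bounds, the multi-scale compactification, and a layer-cake argument—but it contains two genuine gaps.

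First, the sentence ``$N(\omega,\epsilon_0)$ is controlled by the number of short Delaunay edges, which is uniformly bounded by a topological constant'' is false as stated, and the later reference to a ``uniform pointwise bound on $N(\omega,\epsilon_0)$ (per configuration)'' does not hold. The number of short Delaunay \emph{edges} is bounded, but a short saddle connection corresponds to a \emph{path} in the Delaunay $1$-skeleton that may traverse a short edge many times; the number of such paths blows up as the systole shrinks. The only available pointwise bound is the Eskin--Masur estimate $\hat f(\omega)\le C\,|\gamma|^{-(1+\delta)}$ (with $\gamma$ the shortest saddle connection), and this divergence must be \emph{balanced} against measure decay. The paper does this via the level-set decomposition $\Omega_i(k)=\{\hat f\ge k^{1/q}\}\cap\Omega_i$ and a four-case split governed by the ratio between $|\gamma|$ and the length $|\epsilon|$ of the shortest non-parallel saddle connection, together with whether $\gamma$ bounds a short cylinder. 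Your ``stratify by combinatorial type of short Delaunay edges'' does not substitute for this; without a quantitative pointwise bound on $\hat f$ you cannot close the layer-cake sum.

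Second, your diagnosis of the cylinder case is off. The circumference curve $\gamma$ and a crossing saddle connection $\eta$ are never parallel, hence always $\MM$-independent; $\MM$-dependence is not the obstruction. The real issue is that $\eta$ need not be short, so Dozier's $O(|\gamma|^2|\eta|^2)$ bound only yields $O(|\gamma|^2)$, which is too weak to push past $L^2$. The paper's new input is a refined measure estimate: in multi-scale period coordinates, the holonomy of $\eta$ is confined to a rectangle of side $|\gamma|$ by $\epsilon_0$, which upgrades the bound to $O(|\gamma|^3)$ and makes $\sum_k \mu(\Omega_3(k))$ summable for some $q>2$. Your sketch of ``intersecting with the linear equations defining $\MM$'' does not produce this extra power of $|\gamma|$; the gain is purely geometric (the thin-cylinder constraint on $\eta$), not a consequence of $\MM$-dependence.
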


\begin{rmq}
  $\epsilon_0$ should be taken small enough later, in the last paragraph.

\end{rmq}

 We discuss the integrability of this function on several loci of $\HH$. For all of them, but the last one, the arguments are the same as in \cite{athreya2017siegel}. We will briefly remind them to concentrate on the last one.

\myparagraph{Thick part}
On the locus with no saddle connection with length smaller than $\epsilon_0$, we have $\hat{f}=0$.

\myparagraph{No short loops}
On the set where there is a saddle connection of length less than $\epsilon_0$, but no homotopic nontrivial closed curves of length less than $\epsilon_0$,
following paragraph 3.4 of \cite{athreya2017siegel}, $\hat{f}$ is bounded on this set. As the measure $\mu$ is of finite volume, this concludes this case.

\myparagraph{Short loops}
\label{shortloop}
On the case where there are short loops of length smaller than $\epsilon_0$, we will subdivide into four cases $\Omega_0,\Omega_1,\Omega_2,\Omega_3$.

On each, we want to show that $$
\sum_{k \geq 1} \mu(\Omega_i(k)) < + \infty
$$
where $\Omega_i(k):=\{\hat{f} \geq k^{1/q} \} \cap \Omega_i$, for a $q>2$ that we will fix latter.

For the rest of this section we will call $\gamma$ the smallest saddle connection and $|\epsilon|$ the length of the smallest saddle connection which is non parallel with $\gamma$. Note that in the case of two saddle connections  non-parallelism implies $\MM$-independence as explain in $\S$ 1.4 of the article of Dozier \cite{dozier2020measure}.

Theorem 5.1 of \cite{eskin_masur_2001} ensures that if $N$ is the dimension of relative homology, for every $\delta< \frac{1}{N}$ there is a $C$ such that
$$
\hat{f}(\omega) \leq \frac{C}{|\gamma|^{1+\delta}}.
 $$

We can deduce that if $\hat{f}\geq k^{1/q}$ there is a $c$ such that $$
|\gamma| \leq c k^{-\frac{1}{q(1+\delta)}}.
$$

Choose for the rest of the section, any $\delta$ and $p$ such that
$$
0<\delta<p< \frac{1}{N}.
$$
We will fix them later.

\myparagraph{The shortest non-parallel saddle connection is shorter that a power of the shortest}
Let
$$
\Omega_0(k)=\{(X,\omega) \in \HH, \hat{f}(\omega) \geq k^{1/q},|\epsilon| \leq |\gamma|^p\}.
$$
The regularity property of the measure $\mu$ proved by Dozier \cite{dozier2020measure}, knowing non-parallelism and hence the $\MM$-independence of $(\gamma,\epsilon)$, ensures that $$
\mu(\Omega_0(k))=O(|\gamma|^{2+2p})=O(k^{-\frac{2(1+p)}{q(1+\delta)}}),
$$
which is summable if $q<2\frac{1+p}{1+\delta}$

\myparagraph{The shortest non-parallel saddle connection is longer that a power of the shortest and the shortest is not a cylinder curve}
Let
$$
\Omega_1(k)=\{(X,\omega) \in \HH, \hat{f}(\omega) \geq k^{1/q},|\epsilon| \geq |\gamma|^p , \gamma \textrm{ is not on the boundary of a cylinder}\}
$$

Let's recall a lemma from Athreya, Cheung and Masur \cite{athreya2017siegel}.
\begin{lem}[Lemma 3.2 of \cite{athreya2017siegel}]
  Suppose $\gamma$ is the shortest saddle connection on $(X,\omega)$. Let $\beta$ be a saddle connection (with an orientation) such that the path $P(\beta)$ in the Delaunay triangulation given by Theorem \ref{effpath} follows edges parallel to $\gamma$ more than $2M+1$ times, where $M$ is the total number of triangles in the Delaunay triangulation. Then there is a cylinder $C$ with $\gamma$ on its boundary and $\beta$ crosses $C$.
\end{lem}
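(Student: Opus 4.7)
The cornerstone observation is that a flat (Euclidean) triangle cannot have two parallel sides. Consequently each of the $M$ triangles of the Delaunay triangulation contributes at most one edge parallel to the direction of $\gamma$; the double-counting identity (every edge lies in exactly two triangles) then bounds the number of undirected parallel-to-$\gamma$ edges by $M/2$, and hence the number of directed such edges by $M$.

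If $P(\beta)$ uses directed edges parallel to $\gamma$ more than $2M+1$ times, pigeonhole forces some directed edge $e$ parallel to $\gamma$ to appear at least three times in the traversal sequence. Isolating the first and third occurrences, $P(\beta)$ decomposes on that portion into two consecutive closed loops $\sigma_1,\sigma_2$ based at the tail of $e$, whose concatenation is freely homotopic to the corresponding sub-arc of the straight segment $\beta$.

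The geometric heart of the argument is to convert this combinatorial repetition into a cylinder structure. I would examine the chain of triangles visited between two consecutive traversals of $e$: since the two edges parallel to $\gamma$ bounding this chain share the direction of $\gamma$, their adjoining triangles must glue along their non-parallel sides into a flat strip. The triple repetition, driven precisely by the numerical threshold $2M+1$, forces this strip to close up on itself rather than to continue to wander, producing an embedded flat cylinder $C$ whose core curve is parallel to $\gamma$. Invoking the minimality of $|\gamma|$ among saddle connections, one then identifies $\gamma$ itself with a boundary saddle connection of $C$, since anything shorter parallel to $\gamma$ would contradict minimality.

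The crossing statement follows: since $P(\beta)$ both enters and exits $C$ along its lateral boundary while being homotopic to the straight segment $\beta$, the straightness of $\beta$ forces it to traverse $C$ from one boundary component to the other. The principal obstacle is the assembly step: one must verify rigorously that the repeated directed edges cannot remain scattered throughout the triangulation but are compelled to glue into a genuine cylinder, and that the exact threshold $2M+1$ is what turns two concentric loops $\sigma_1,\sigma_2$ into a closed strip whose boundary is identified with $\gamma$.
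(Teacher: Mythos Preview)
The paper does not supply a proof of this lemma; it is quoted from \cite{athreya2017siegel} (Lemma~3.2 there) and used as a black box. So there is no in-paper argument to compare against, and your proposal has to stand on its own.

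Your counting step is fine: a Euclidean triangle has at most one side in any given direction, so at most $M/2$ undirected Delaunay edges are parallel to $\gamma$, hence at most $M$ directed ones, and more than $2M+1$ traversals forces some directed edge $e$ to appear at least three times. But this is the easy half.

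The hard half, which you yourself flag as the ``principal obstacle,'' is not carried out. Your claim that the triangles visited between consecutive traversals of $e$ ``must glue along their non-parallel sides into a flat strip'' is unsupported: between two uses of $e$ the path $P(\beta)$ can wander through arbitrary parts of the triangulation, including triangles with no side parallel to $\gamma$ at all, and nothing in your argument forces those triangles to organise into a strip, let alone one that closes into an annulus. The assertion that the loops $\sigma_1,\sigma_2$ are ``freely homotopic to the corresponding sub-arc of the straight segment $\beta$'' is also problematic: a sub-arc of $\beta$ is a segment, not a loop, and the homotopy between $P(\beta)$ and $\beta$ is a rel-endpoints homotopy of the whole path, which does not let you peel off closed sub-loops and match them to pieces of $\beta$. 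Finally, even granting a cylinder $C$ in the direction of $\gamma$, the step ``minimality of $|\gamma|$ identifies $\gamma$ with a boundary saddle connection of $C$'' is a non sequitur: minimality tells you nothing about where $\gamma$ sits relative to $C$. As written, the proposal is a plausible sketch of where a cylinder ought to come from, but the geometric core of the argument is missing.
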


As in this case $\gamma$ is not on the boundary of a cylinder, every saddle connection $\beta$ such that the path $P(\beta)$ follow $2M$ times a edge parallel with $\gamma$ must follow an edge non parallel with $\gamma$ after that, and hence of length at least $| \epsilon |$. Thus any saddle connection with length less that $\epsilon_0$ can be decomposed as $O(|\epsilon|^{-1})$ edges of the triangulation and hence decomposed in a basis of $H_1(X,\omega,\Sigma)$ with coefficient that are $O(|\epsilon|^{-1})$. The number of saddle connection smaller than $\epsilon_0$ is then $O(|\epsilon|^{-N})$ where
$N$ is the dimension of $H_1(X,\omega,\Sigma)$.

To summary
$$
\hat{f}(X,\omega) = O(|\epsilon|^{-N}) = O(|\gamma|^{-Np}).
$$
The result recalled at the beginning of subparagraph "short loops" gives that,  if $\hat{f}$ has to be bigger than $k^{1/q}$, it is needed that
$$
|\gamma|=O(k^{\frac{-1}{qNp}}).
$$
Thus, using the regularity of the measure $\mu$,
$$
\mu(\Omega_1(k))=O(|\gamma|^2)=O(k^{\frac{-2}{qNp}}),
$$
which is summable if $q < \frac{2}{Np}$.

\myparagraph{The shortest non-parallel saddle connection is longer that a power of the shortest and the shortest is a cylinder curve and the height of the cylinder is at least $\epsilon_0$}
Let
$$
\Omega_2(k)=\{(X,\omega) \in \HH, \hat{f}(\omega) \geq k^{1/q},|\epsilon| \geq |\gamma|^p , \gamma \textrm{ is on the boundary of a cylinder of height } \geq \epsilon_0\}
$$

Here the same lemma as in the previous paragraph is applied, and we get that a curve that follow $2M$ times edges parallel to $\gamma$ should after cross the cylinder whose height is $\epsilon_0$. As the number of parallel edges is bounded, so is $\hat{f}$.
Then $\Omega_2(k)$ is empty for $k$ big enough.

\myparagraph{The shortest non-parallel saddle connection is longer that a power of the shortest and the shortest is a cylinder curve and the height of the cylinder is at most $\epsilon_0$}
Let
$$
\Omega_3(k)=\{(X,\omega) \in \HH, \hat{f}(\omega) \geq k^{1/q},|\epsilon| \geq |\gamma|^p , \gamma \textrm{ is on the boundary of a cylinder of height } \leq \epsilon_0\}.
$$

For every point $X \in \bar{\HH}$ choose a neighborhood which is a period coordinate chart as in theorem \ref{NeiPerCa}. As the space is compact, we can extract a finite covering. This family separates into two kinds, the first one which are contained in the open locus of the compactification $\HH$, and the others which have a point on the boundaries $\bar{\HH}-\HH$.

We fix $\epsilon_0$ small enough, such that $\Omega_3(k)$ is included in a finite union of neighborhoods of the second type for every $k$.

Let $V$ be one of these neighborhoods.
In this neighborhood we pick an ordering $\succ$ of the subsurface. There is only a finite number of choice of neighborhood and of ordering.

We are interested in the intersection of $V$ with $\Omega_3(k)$ and the set of surfaces consistent with $\succ$. Let's call this set $B^{\succ}_{\Omega_3(k), V}$.

Let $\eta$ be a saddle connection included in the cylinder $C$, joining two zeros on the boundaries of $C$ and crossing $\gamma$ only once.
$\eta$ and $\gamma$ are $\MM$-independent since they are not parallel.

We add $\alpha_1,\cdots,\alpha_{k'}$, other saddle connections with representatives leaving at level at most $X^{(0)}$ such that $\gamma,\eta,\alpha_1,\cdots,\alpha_{k'}$ generate $H_{X^0 \succ}$ the subspace of elements of $H_1(X,\Sigma;\mathbb{C})$ that lie at level $X^0 \succ$. They do not cross any $\succ$-wide cylinder.

Then for each $\succ$-wide cylinder, one by one, in the order given by $\succ$, if the circumference curve is independent of the previous basis we add a curve joining two zeros and crossing no other $\succ$-wide cylinder (this is always doable). These saddle connections are called $\alpha_{k'+1},\cdots,\alpha_{m'}$.

Then we can extract an $\MM$-independent basis by removing, in the order of apparition in $\gamma,\eta,\alpha_1,\cdots,\alpha_{k'},\alpha_{k'+1},\cdots,\alpha_{m'}$, the $\MM$-dependent saddle connections.
We get a new basis $\gamma,\eta,\beta_1,\cdots,\beta_{k},\beta_{k+1},\cdots,\beta_{m}.$

Next we bound the period coordinates of this basis. To do this we recall two lemmas from Dozier \cite{dozier2020measure}.
\begin{lem}
  \label{doz1}
  For $\alpha$ a relative homology class defined on the surfaces in $V$, there exists a constant $C>0$ with the following property. Fix $X \in V$ of area $1$ such that $\succ$ is consistent with $X$. Let $Y$ be the level of $\alpha$ with respect to $\succ$. Then $$
  |\alpha(X)|\leq C size_X(Y).
  $$
\end{lem}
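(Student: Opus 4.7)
The plan is to exploit the period coordinate chart structure around a boundary point $\bar{X}$ provided by Theorem~\ref{NeiPerCa}, together with the quantitative description of how scaling parameters govern periods in the multi-scale compactification. Recall that in such a chart $V$, each level subsurface $X^{(j)}$ carries a scaling parameter $t_j$ with associated integer $a_j$, and the differential restricted to the component at level $j$ has the form $t_j^{a_j}\eta_j$, where $\eta_j$ depends only on the moduli parameters $s_i$ and on the horizontal smoothing data. The function $size_X$ is defined (by Dozier) precisely so that for $Y = X^{(j)}$ it is comparable to $|t_j^{a_j}|$, and for a degenerating cylinder it is comparable to the magnitude of the corresponding smoothing parameter.

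I would then fix, once and for all, an adapted basis $\{\zeta_k\}$ of the relative homology in which each $\zeta_k$ is supported on a single element $Y_k \in \mathcal{S}$ — the same type of construction already used in the paper when building the basis $\gamma,\eta,\beta_1,\dots,\beta_m$. For each such basis element, the period $\zeta_k(X)$ is computed by integrating the relevant level differential; its magnitude is controlled by $|t_j^{a_j}|$ (resp.\ by the smoothing parameter of the cylinder) multiplied by quantities that are smooth on the compact closure of $V$ and therefore bounded. This yields constants $C_k > 0$ with $|\zeta_k(X)| \leq C_k\, size_X(Y_k)$ uniformly over $X \in V$.

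Writing $\alpha = \sum_k a_k \zeta_k$ with coefficients that are fixed (since $\alpha$ is a fixed homology class), the level of $\alpha$ with respect to $\succ$ is $Y = \max_{\succ,\, a_k \neq 0} Y_k$. The consistency assumption gives $size_X(Y_k) \leq size_X(Y)$ whenever $a_k \neq 0$, so
\[
|\alpha(X)| \;\leq\; \sum_k |a_k|\,|\zeta_k(X)| \;\leq\; \Bigl(\sum_k |a_k|\, C_k\Bigr)\, size_X(Y),
\]
and one takes $C$ to be the bracketed constant.

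The main obstacle, and where I expect the real work to lie, is making the constants $C_k$ uniform over the compact closure of $V$ and independent of the choice of $\succ$ (there are only finitely many admissible orderings per neighborhood, so a max over them suffices), and treating degenerating cylinders on an equal footing with level subsurfaces. The latter amounts to checking that $size_X$ on a degenerating cylinder really absorbs the corresponding smoothing parameter in exactly the manner that $t_j^{a_j}$ does for a level subsurface, which is essentially a bookkeeping task built into Dozier's definitions rather than genuinely new analysis.
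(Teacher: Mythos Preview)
The paper does not prove this lemma: it is explicitly \emph{recalled} from Dozier \cite{dozier2020measure} (see the sentence immediately preceding the statement), so there is no in-paper proof to compare against. Your sketch is a plausible outline of how Dozier's argument actually proceeds --- choosing an adapted basis whose elements live on single elements of $\mathcal{S}$, controlling each basis period by the relevant scaling/smoothing parameter (which is the definition of $size_X$), and then combining via the linear decomposition of $\alpha$ together with the consistency of $\succ$ to pass to the maximal level. The caveats you flag (uniformity of the $C_k$ over the closure of $V$, and the bookkeeping for degenerating cylinders) are exactly the points where Dozier's definitions of $size_X$ and of the level of a homology class do the work; so your plan is correct in spirit, but for the purposes of this paper the right move is simply to cite Dozier rather than reprove the lemma.
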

\begin{lem}
  \label{doz2}
  Let $\beta$ be a relative homology class defined on the surface in $V$ with the following properties:
  \begin{itemize}
    \item $\beta$ has a representative that crosses exactly one closed curve $\alpha$ that is the core curve of a degenerating cylinder. We let $Y$ be the level subsurface containing $\alpha$.
    \item $\beta$ has a representative such that the level subsurfaces which the representative intersects all lie at or below the level of $Y$.
  \end{itemize}
  Then there exists a family of rectangle $\mathcal{R}(z) \subset \mathbb{C}$, of area bounded above by some $R$ (depending on $V$ and $\beta$) with the following property. For any $X \in V$ of area $1$, we have,$$
\beta(X) \in \mathcal{R}(\alpha(X)).
  $$
  Furthermore, the rectangles have the property that $s\mathcal{R}(z) \subset \mathcal{R}(s z)$ for any $z \in \mathbb{C}$ and $0<s\leq 1$.
\end{lem}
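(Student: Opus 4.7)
The plan is to work in the period coordinate chart on $V$ provided by Theorem \ref{NeiPerCa}, in which every relative period is a $\mathbb{C}$-linear function of the local complex coordinates (moduli parameters, smoothing parameters, and scaling parameters). I split the given representative of $\beta$ into two pieces: a subarc $\beta_{\mathrm{cyl}}$ that traverses the degenerating cylinder $C$ with core $\alpha$ exactly once, and a complement $\beta_{\mathrm{rest}}$ supported entirely on level subsurfaces at or below the level $Y$ of $\alpha$. Then $\beta(X)=\beta_{\mathrm{cyl}}(X)+\beta_{\mathrm{rest}}(X)$ as functions of $X\in V$.

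For $\beta_{\mathrm{cyl}}(X)$, I use the flat model of the cylinder: on $X$, $C$ has circumference $|\alpha(X)|$ in the direction $\arg\alpha(X)$ and height $h(X)$ in the perpendicular direction, so an arc crossing $C$ once from a boundary zero to a boundary zero has holonomy of the form $\tau(X)\,\alpha(X)/|\alpha(X)| + i\,h(X)\,\alpha(X)/|\alpha(X)|$ with twist $|\tau(X)|\leq|\alpha(X)|$. The essential input is a uniform bound $h(X)\leq h^{\max}=h^{\max}(V,\beta)$ on $V$: in the multi-scale chart, $h$ is a continuous function of the bounded moduli and scaling parameters at level $Y$, so compactness of the closed chart yields the bound. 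Hence $\beta_{\mathrm{cyl}}(X)$ lies in a rectangle with sides in the directions $\alpha(X)/|\alpha(X)|$ and $i\alpha(X)/|\alpha(X)|$, of width $|\alpha(X)|$ and height $h^{\max}$.

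For $\beta_{\mathrm{rest}}(X)$, express this relative class as a $\mathbb{C}$-linear combination of finitely many classes, each represented at level $\preceq Y$; Lemma \ref{doz1} together with the boundedness of $\mathrm{size}_X(Y')$ for $Y'\preceq Y$ on $V$ yields $|\beta_{\mathrm{rest}}(X)|\leq D=D(V,\beta)$. Now define $\mathcal{R}(z)$ to be the rectangle with sides in the directions $z/|z|$ and $iz/|z|$ and with dimensions $|z|+2D$ and $h^{\max}+2D$, positioned so that it contains the Minkowski sum of the $\beta_{\mathrm{cyl}}$-rectangle and the $D$-ball containing $\beta_{\mathrm{rest}}$. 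Then $\beta(X)\in\mathcal{R}(\alpha(X))$ and the area is uniformly bounded on $V$ by some $R=R(V,\beta)$. For $s\in(0,1]$ the inclusion $s\mathcal{R}(z)\subset\mathcal{R}(sz)$ holds because each side length of $\mathcal{R}(z)$ is affine in $|z|$ with non-negative intercept, so $s(|z|+2D)\leq s|z|+2D$ and $s(h^{\max}+2D)\leq h^{\max}+2D$, while $sz$ has the same argument as $z$ and the two rectangles share axes.

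The main obstacle is the uniform upper bound on $h(X)$: one must unpack the BCGM smoothing construction to see that the height of a degenerating cylinder is controlled by the bounded moduli and scaling parameters of its own level and does not blow up as the plumbing parameter of $\alpha$ tends to zero, and one must use the consistency of the chosen ordering $\succ$ with $X$ to rule out interference from other features at the same scale that could inflate $h(X)$ or the twist representation as $|\alpha(X)|\to 0$.
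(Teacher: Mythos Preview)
The paper does not give its own proof of this lemma: it is quoted from Dozier \cite{dozier2020measure} (introduced with ``we recall two lemmas from Dozier''), so there is no in-paper argument to compare against. I therefore assess your proposal on its own terms.

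Your decomposition $\beta=\beta_{\mathrm{cyl}}+\beta_{\mathrm{rest}}$ and the appeal to Lemma~\ref{doz1} for the second piece are the right skeleton, but the argument breaks precisely at the point you yourself flag as the ``main obstacle''. The claimed uniform bound $h(X)\le h^{\max}$ is not available: a degenerating cylinder is by definition one whose modulus $h(X)/|\alpha(X)|$ diverges as $X$ approaches the boundary of $V$, and the height is governed by the horizontal-node smoothing parameter (which ranges over $0<|t|<\epsilon$ on $V$), not by the ``bounded moduli and scaling parameters at level $Y$'' as you assert. In the paper's own application the cylinders crossed by the $\beta_i$ with $i>k$ are $\succ$-wide, i.e.\ larger than every level subsurface, so their height genuinely blows up on $V$. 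What \emph{is} uniformly bounded on area-$1$ surfaces is the product $|\alpha(X)|\cdot h(X)\le 1$, and it is this area bound that must drive the construction of $\mathcal{R}$.

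There are two knock-on problems. First, your rectangle of dimensions $(|z|+2D)\times(h^{\max}+2D)$ has no finite $h^{\max}$, hence no bounded area. Second, even replacing $h^{\max}$ by $h(X)$ would make the rectangle depend on $X$ rather than on $z=\alpha(X)$ alone, contrary to the statement. The repair is to take $\mathcal{R}(z)$ with one side of order $|z|$ (in the $\alpha$-direction) and the other of order $1/|z|$ (transverse), and simultaneously to sharpen the bound on $\beta_{\mathrm{rest}}$ from a uniform constant $D$ to $O(size_X(Y))$, which is itself $O(|\alpha(X)|)$ in the $\alpha$-direction; then the enlarged rectangle has area $O(|\alpha(X)|\cdot h(X))+O(|\alpha(X)|)=O(1)$, depends only on $z$, and the inclusion $s\mathcal{R}(z)\subset\mathcal{R}(sz)$ follows since the transverse side length $c/|z|$ satisfies $s\cdot c/|z|\le c/|sz|$ for $0<s\le 1$. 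This is essentially how Dozier's original argument runs.
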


Using this two lemmas and prior information on $\gamma$ and $\eta$, we can state that:

\begin{itemize}
    \item The period coordinates of $\gamma$ live in a disk of radius $ck^{-\frac{1}{q(1+\delta)}}$,
    \item The period coordinates of $\eta$ live in a rectangle centered at the origin with one side of length $|\gamma|$ and one side $\epsilon_0$, we call $R(z_{\gamma},\epsilon_0)$ this rectangle,
    \item All $\beta_i$ with $1 \leq i \leq  k$, with lemma \ref{doz1} are bound by $$
    |\beta_i(X')| \leq C size_{X^{0}}(X') \leq K
    $$
    because the size of the top subsurface is bounded in $V$,
    \item All $\beta_i$ with $k+1 \leq i \leq m$ are in $\mathcal{R}(\gamma_i(X'))$ with $\gamma_i$ the circumference curves of the $\succ$-wide cylinders they cross, and $\mathcal{R}(\cdot)$ the rectangles given by lemma \ref{doz2}. There are linear functions $f_i$ such that $\gamma_i= f_i(\gamma,\eta,\beta_1,\cdots,\beta_k)$ for all $k<i\leq m$.
     Indeed $\gamma_i$ belong to a sub-surface $X^i$ below the level $X^0$ and $H_{X^0 \succ }$ is generated by $\gamma,\eta,\beta_1,\cdots,\beta_k$. We call $\mathcal{R}_i$ the rectangle  $\mathcal{R}(f_i(\gamma,\eta,\beta_1,\cdots,\beta_k))$ .
\end{itemize}

We can now integrate using Fubini. To shorten the notation we call the period coordinate of $\beta_i$ $z_i := z_{\beta_i}$.

\begin{align*}
  \mu(\{ B^{\succ}_{\Omega_3(k), V} \}) & =  \mu'(\{ s X : 0 \leq s \leq 1, X \in B^{\succ}_{\Omega_3(k), V}\})
  \\
   & =  Leb(\{ s(z_{\gamma}, z_{\eta}, z_1,\cdots, z_k) \in \mathbb{C}^{k+2}, 0 \leq s \leq 1, z_{\gamma} \in B(0,ck^{-\frac{1}{q(1+\delta)}}),
  \\
 &
 z_{\eta} \in R(z_{\gamma},\epsilon_0), z_i \in B(0, K), 1 \leq i \leq m, z_j \in \mathcal{R}_j, k+1 \leq j \leq m \})
 \\
 &
 \\
 & \leq Leb(\{s(z_{\gamma}, z_{\eta}, z_1,\cdots, z_k) \in \mathbb{C}^{k+2}, 0 \leq s \leq 1, s z_{\gamma} \in B(0,s ck^{-\frac{1}{q(1+\delta)}}),
\\
&
s z_{\eta} \in R(sz_{\gamma},s\epsilon_0), s z_i \in B(0, K), 1 \leq i \leq m, s z_j \in s \mathcal{R}_j, k+1 \leq j \leq m \})
\\
&
\\
& \leq Leb(\{(z_{\gamma}, z_{\eta}, z_1,\cdots z_k) \in \mathbb{C}^{k+2}, z_{\gamma} \in B(0, ck^{-\frac{1}{q(1+\delta)}}),
\\
&
s z_{\eta} \in R(z_{\gamma},\epsilon_0),  z_i \in B(0, K), 1 \leq i \leq m,  z_j \in  \mathcal{R}_j, k+1 \leq j \leq m \})
\\
&
\\
& =
\underset{B(0,ck^{-\frac{1}{q(1+\delta)}})}{\int}
\underset{R(z_{\gamma},\epsilon_0)}{\int}
\underset{B(0, K)}{\int}
\underset{\mathcal{R}_j}{\int}
dz_m \wedge d\bar{z}_m \cdots dz_k \wedge d\bar{z}_k \cdots dz_{\eta} \wedge d\bar{z}_{\eta} dz_{\gamma} \wedge d\bar{z}_{\gamma}
 \\
 & =  O(k^{-\frac{3}{q(1+\delta)}})
\end{align*}

Finally summing over all choices of ordering and of neighborhood, we get that
$$
\mu(\Omega_3(k))=O(k^{-\frac{3}{q(1+\delta)}})
$$
which is summable if $q<\frac{3}{1+\delta}$.

\subsection{End of the proof}
In order to have the required integrability, we should now adjust the variables according to three conditions, namely:
\begin{itemize}
    \item $q<\frac{2}{Np}$
    \item $q<2 \frac{1+p}{1+\delta}$
    \item $q<\frac{3}{1+\delta}$
\end{itemize}
So we fix $p$ and $\delta$ as $$
\delta=\frac{1}{16N}<p=\frac{1}{8N}<\frac{1}{N}
$$
This yields the condition $q< 2+ C(N)$ with $C(N)$ some positive constant. Overall choosing $q$ between $2$ and $2+ C(N)$ we have shown the $2+\kappa$-integrability of the Siegel-Veech transform for a $\kappa>0$.

\begin{cor}
  For every $R>0$ the characteristic function of $B(0, R)$ is in $L^{2+\kappa}(\HH,\mu)$.
\end{cor}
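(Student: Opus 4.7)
The plan is to reduce the corollary to Theorem \ref{thm_SV_epsilon} via a covering of $B(0,R)$ by finitely many $\SLR$-images of $B(0,\epsilon_0)$, combined with the $\SLR$-invariance of $\mu$. If $R \leq \epsilon_0$ the statement is immediate; assume $R > \epsilon_0$. The key observation is that for every $g \in \SLR$, substituting $z \mapsto gz$ in the defining sum of the Siegel-Veech transform gives
$$\hat{\chi}_{g\cdot B(0,\epsilon_0)}(\omega) = \sum_{z \in \Lambda_\omega} \chi_{B(0,\epsilon_0)}(g^{-1}z) = \hat{\chi}_{B(0,\epsilon_0)}(g^{-1}\omega),$$
since $\Lambda_{g^{-1}\omega} = g^{-1}\Lambda_\omega$. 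The $\SLR$-invariance of $\mu$ therefore yields
$$\|\hat{\chi}_{g\cdot B(0,\epsilon_0)}\|_{L^{2+\kappa}(\HH,\mu)} = \|\hat{\chi}_{B(0,\epsilon_0)}\|_{L^{2+\kappa}(\HH,\mu)},$$
which is finite by Theorem \ref{thm_SV_epsilon}.

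Next I construct the covering. Decompose $B(0,R)\setminus B(0,\epsilon_0)$ into dyadic annuli $A_j := \{R\, 2^{-j-1} < |z| \leq R\, 2^{-j}\}$ for $0 \leq j \leq \lceil \log_2(R/\epsilon_0) \rceil$. For each $j$, take the Teichmüller element $g_{t_j}$ with parameter $t_j = \log(R\, 2^{-j}/\epsilon_0)$, so that $g_{t_j}\cdot B(0,\epsilon_0)$ is an ellipse centered at the origin with semi-axes $R\, 2^{-j}$ (along the real axis) and $\epsilon_0^2\, 2^j/R$ (along the imaginary axis). A direct computation with the equation of the ellipse shows that the part of it lying inside $A_j$ covers an angular sector of width of order $\epsilon_0^2\, 4^j/R^2$, hence $O((R\, 2^{-j}/\epsilon_0)^2)$ rotated copies $r_\theta \cdot g_{t_j}\cdot B(0,\epsilon_0)$ suffice to cover $A_j$. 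Summing the geometric series over $j$ produces $M = O((R/\epsilon_0)^2)$ ellipses $g_i\cdot B(0,\epsilon_0)$ whose union, together with $B(0,\epsilon_0)$, contains $B(0,R)$.

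Finally, from the pointwise inequality $\chi_{B(0,R)} \leq \chi_{B(0,\epsilon_0)} + \sum_{i=1}^M \chi_{g_i\cdot B(0,\epsilon_0)}$, taking Siegel-Veech transforms term by term and applying Minkowski's inequality gives
$$\|\hat{\chi}_{B(0,R)}\|_{L^{2+\kappa}(\HH,\mu)} \leq (M+1)\cdot \|\hat{\chi}_{B(0,\epsilon_0)}\|_{L^{2+\kappa}(\HH,\mu)} < \infty,$$
which completes the proof. No substantial obstacle is anticipated; the only technical ingredient is the elementary geometric covering estimate, which is routine once the dyadic structure is fixed.
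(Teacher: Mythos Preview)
Your approach is essentially the paper's: exploit the equivariance $\hat{\chi}_{gE}(\omega)=\hat{\chi}_E(g^{-1}\omega)$ together with the $\SLR$-invariance of $\mu$ to reduce to finitely many copies of $B(0,\epsilon_0)$. The paper's execution is a bit leaner: it skips the dyadic annuli entirely and covers $B(0,R)$ directly by $O(R^2/\epsilon_0^2)$ sectors of angular width $\epsilon_0^2/R^2$, observing that a single element $g_{t_0}r_{-\theta_0}$ with $t_0=\log(R/\epsilon_0)$ sends each such sector into $B(0,\epsilon_0)$.

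There is, however, a small but genuine slip in your covering step. With $t_j=\log(R\,2^{-j}/\epsilon_0)$ the ellipse $g_{t_j}\!\cdot B(0,\epsilon_0)$ has major semi-axis exactly $R\,2^{-j}$, so it meets the outer circle $\{|z|=R\,2^{-j}\}$ of $A_j$ only at the two endpoints of its major axis; it therefore does \emph{not} contain any angular sector of $A_j$ of positive width. More precisely, at radius $r<R\,2^{-j}$ the angular half-width of the ellipse is of order $b\sqrt{1/r^2-1/a^2}$, which tends to $0$ as $r\uparrow R\,2^{-j}$, so no finite collection of rotated copies covers $A_j$. The fix is immediate: enlarge $t_j$ slightly, e.g.\ $t_j=\log(2R\,2^{-j}/\epsilon_0)$, so that the major semi-axis is $2R\,2^{-j}$ and the ellipse genuinely contains a sector of $A_j$ of angular width comparable to $\epsilon_0^2 4^j/R^2$. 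After this adjustment your count $M=O((R/\epsilon_0)^2)$ and the Minkowski step go through unchanged; alternatively, adopting the paper's sector covering avoids the issue altogether.
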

\begin{proof}
  The proof is in \cite{athreya2017siegel} at theorem 3.3. We reproduce it here for completeness.

  Cover the disk of radius $R$ with sectors of angle $\frac{\epsilon_0^2}{R^2}$. By linearity is enough to show that for each sector, the characteristic function $f$ has its Siegel-Veech transform $\hat{f} \in L^{2+\kappa}(\HH,\mu)$.

  Let $(g_t)_{t \in \mathbb{R}}$ be the diagonal action and $(r_{\theta})_{\theta \in \mathbb{R}}$ the rotational action of $\SLR$ on the strata of the moduli space.
  Moreover, let $t_0 = \log (\frac{R}{\epsilon_0})$ and $\theta_0$ be the center angle of the sector.
   For any translation surface, if we apply $g_{t_0}r_{-\theta_0}$, any saddle connection with period coordinate in the sector will have in the new geometry a length less than $\epsilon_0$. Calling $h$ the characteristic function of the disk of radius $\epsilon_0$, since this two flow are measure preserving we have $$
\int \hat{f}^{2+\kappa} d \mu < \int \hat{h}^{2+\kappa} d \mu < + \infty.
  $$
\end{proof}
The proof of Theorem \ref{thm_SVtr2} is now an easy consequence of the previous corollary.
\begin{proof}[Proof of Theorem 1.4]

  Indeed, take any $f \in B_c(\mathbb{R}^2)$. By assumption there is a $R$
   such that $|f| \leq \| f \|_{\infty} \chi_{B(0, R)}$,
    with $\chi_{B(0, R)}$ the characteristic function on the ball $B(0, R)$. Positivity of the Siegel-Veech transform yields the result.
\end{proof}

A corollary of Theorem \ref{thm_SVtr2} is that we can extend Theorem \ref{thm_LimAver} to any ergodic $\SLR$-invariant measure $\mu$.
\begin{thm}
  \label{cor_LimAver}
  For any ergodic $\SLR$-invariant measure $\mu$ and for any $\phi \in C_c(\mathbb{C}^2)$, for $\mu$-almost every $\omega \in \HH$, the circle average of $\hat{\phi}$ converge
  $$
  \underset{t \to \infty}{\lim} A_t \hat{\phi}(\omega)= \int_{\HH} \hat{\phi} d \mu
  $$
\end{thm}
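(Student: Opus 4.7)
The plan is to reduce this statement to the Masur-Veech case (Theorem \ref{thm_LimAver}) by inspecting the argument of Athreya, Fairchild and Masur and observing that the only measure-specific ingredient is the integrability of the Siegel-Veech transform, which is now available in our generality thanks to Theorem \ref{thm_SVtr2}. Concretely, Nevo's ergodic theorem (Theorem \ref{thm_Nevo}) requires two features of the test function on $\HH$: membership in $L^{1+\kappa}(\HH,\mu)$ and $K$-finiteness. The first is exactly the content of the corollary to Theorem \ref{thm_SVtr2}: for any $\phi \in C_c(\mathbb{C}^2)$, the Siegel-Veech transform $\hat{\phi}$ lies in $L^{1+\kappa'}(\HH,\mu)$ for some $\kappa' > 0$.

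To handle $K$-finiteness, I would first decompose $\phi$ into its $K$-isotypic components with respect to the diagonal rotation action of $K=SO(2)$ on $\mathbb{C}^2$. Write $\phi = \sum_{n \in \mathbb{Z}} \phi_n$ where $\phi_n(r_\theta z_1, r_\theta z_2) = e^{in\theta}\phi_n(z_1,z_2)$, obtained by Fourier analysis in the angular variable; after a preliminary mollification the series converges rapidly in $C_c$, and each $\phi_n$ has a $K$-finite Siegel-Veech transform, so Nevo's theorem applies term by term to $\hat{\phi}_n$. The corresponding smoothed averages $\int \eta(t-s)(A_s \hat{\phi}_n)(\omega) ds$ converge to $\int \hat{\phi}_n d\mu$ for $\mu$-almost every $\omega$, and summing in $n$ (using the uniform $L^{1+\kappa'}$ control coming from the uniform bound on the $\phi_n$) yields the same convergence for $\hat{\phi}$ itself.

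The final step is to pass from the $\eta$-smoothed convergence to the pointwise convergence of $A_t \hat{\phi}(\omega)$. This is the purely functional-analytic bracketing argument used in the Masur-Veech case: approximate $\phi$ from above and below by nonnegative continuous compactly supported functions $\phi^\pm$ whose integrals differ by at most $\varepsilon$; monotonicity of the Siegel-Veech transform and of $A_s$ allows one to sandwich the pointwise value $(A_t \hat{\phi})(\omega)$ between smoothed averages of $\hat{\phi}^{\pm}$ to which the Nevo limit applies, and letting $\eta$ shrink to a Dirac mass, together with $\varepsilon \to 0$, forces the pointwise circle average to converge to $\int \hat{\phi} d\mu$. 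The hard part in this scheme is not any single step but auditing the argument of \cite{athreya2022counting} to confirm that its only measure-specific input is the $L^{1+\kappa}$ integrability of $\hat{\phi}$, so that Theorem \ref{thm_SVtr2} really suffices to transfer the whole proof from $\mu_{MV}$ to an arbitrary ergodic $\SLR$-invariant measure $\mu$.
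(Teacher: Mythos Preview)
Your proposal is correct and follows exactly the approach the paper takes: the paper's proof is a one-line reference stating that the argument is identical to Proposition~2.2 of \cite{athreya2022counting} once one replaces the Masur--Veech $L^{1+\kappa}$ integrability of $\hat\phi$ by the general-$\mu$ integrability supplied by Theorem~\ref{thm_SVtr2}. Your sketch (the $K$-isotypic decomposition to obtain $K$-finite pieces, Nevo's theorem term by term, and the sandwiching to remove the bump $\eta$) is a faithful unpacking of that reference, so there is nothing to add.
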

\begin{proof}
  The proof is identical to the proof of Proposition 2.2 of \cite{athreya2022counting}, but using the that $\phi \in L^{1+\kappa}(\HH,\mu)$ with a general $\mu$.
\end{proof}

\section{Bounds on the volume of some sets}
\label{sec3}

To complete the discussion of the quadratic asymptotic for $N_A(\omega,R)$, we need to revisit the proof of three technical lemmas, namely Lemma 5.2, 5.5 and 5.9 of \cite{athreya2022counting}.

The first one is not hard to extend thanks to the work of Dozier \cite{dozier2020measure}.
In our setting, it states that if $\mu$ is a $\SLR$-invariant measure then:

\begin{lem}
  \label{lem_Dozier}
    For all $\epsilon,\kappa > 0$, the $\mu$-volume of the set $V_1(\epsilon,\kappa) \subset H$ of $\omega$ which have a saddle connection of length at most $\epsilon$, and a non-homologous saddle connection with length at most $\kappa$ is $O(\epsilon^2 \kappa^2)$
\end{lem}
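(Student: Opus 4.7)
The plan is to deduce this bound from Dozier's regularity theorem in \cite{dozier2020measure}, which guarantees that for any $\SLR$-invariant ergodic measure $\mu$ on an affine invariant subvariety $\MM$, the $\mu$-volume of the locus carrying two $\MM$-independent saddle connections of lengths at most $\epsilon$ and $\kappa$ is $O(\epsilon^2 \kappa^2)$, with implied constant depending only on $\MM$.

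The first step is to pass from the "non-homologous" hypothesis in the lemma to the "$\MM$-independent" hypothesis of Dozier's theorem. For a pair of saddle connections, non-parallelism implies $\MM$-independence (as recalled from Section 1.4 of \cite{dozier2020measure} in the paragraph defining $\Omega_0$ in Section \ref{shortloop}), so non-parallel non-homologous pairs are handled at once by applying Dozier's theorem with length thresholds $\epsilon$ and $\kappa$. In a second step I would treat the residual subcase of parallel but non-homologous pairs by working in a finite cover of $\bar\HH$ by the period coordinate charts of Theorem \ref{NeiPerCa}: completing the pair to an $\MM$-independent basis and applying Lemmas \ref{doz1} and \ref{doz2} to confine the complementary period coordinates to bounded regions of $\mathbb{C}$, one then integrates by Fubini over the two short period coordinates and sums over the finitely many charts and orderings of subsurfaces to obtain the same product bound.

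The main obstacle is precisely this parallel-but-non-homologous subcase, since Dozier's $\MM$-independence hypothesis is not automatic there. Fortunately the required local computation is essentially the one already performed for $\Omega_3(k)$ in Section \ref{shortloop}, where a short parallel configuration consisting of a degenerating cylinder and a crossing saddle connection is controlled via the multi-scale compactification machinery. Adapting that computation with $\epsilon$ and $\kappa$ playing the roles of the two length thresholds yields the $O(\epsilon^2\kappa^2)$ bound in this case as well, completing the proof.
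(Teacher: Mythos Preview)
The paper's own proof is a single sentence: it asserts that the statement is \emph{precisely} Theorem 1.1 of Dozier \cite{dozier2020measure}, with no further reduction. Your first step, invoking Dozier for non-parallel (hence $\MM$-independent) pairs, matches this; the paper simply does not regard any residual case as needing separate treatment.

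Your second step, however, has a genuine gap. The $\Omega_3(k)$ computation you propose to adapt is \emph{not} a computation for parallel pairs: there $\gamma$ is a cylinder circumference curve and $\eta$ is a saddle connection crossing that cylinder, and the paper explicitly notes that ``$\eta$ and $\gamma$ are $\MM$-independent since they are not parallel.'' The Fubini integration over the period coordinates of $\gamma$ and $\eta$ works precisely because they give two genuinely independent coordinate directions on $\MM$. For a pair that is parallel and $\MM$-dependent, you cannot ``complete the pair to an $\MM$-independent basis'': the two saddle connections contribute only one direction in $T_\omega\MM$, so integrating in period coordinates yields at best $O(\min(\epsilon,\kappa)^2)$, not the product $O(\epsilon^2\kappa^2)$. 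Thus the adaptation you sketch does not go through for the case it is meant to handle.

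In short: either Dozier's Theorem 1.1 already covers the non-homologous hypothesis directly (which is how the paper reads it, making your second step unnecessary), or there is a subtlety in passing from ``non-homologous'' to ``$\MM$-independent'' that neither the paper nor your proposal actually resolves. In either reading, your proposed treatment of the parallel subcase does not work as written.
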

\begin{proof}
The analog of this lemma for all $\SLR$-invariant measure is precisely the main result of Dozier, that is Theorem 1.1 of \cite{dozier2020measure}.
 \end{proof}

 The second one bounds the integral of the Siegel-Veech transform of a function with compact support on the thin part of the moduli space.

 \begin{lem}
   \label{lem_IntThinpart}
  Let $N$ be the maximal number of edge in a Delaunay triangulation and choose $\delta$ such that $\delta < 1/2N$. Let $\HH_\epsilon$ be the locus of moduli space where the shortest saddle connection has length less than $\epsilon_1$.
  Let $\psi$ be the Siegel-Veech transform of the characteristic function of the ball $B(0,L_0) \subset \mathbb{C}^2$ then $$
  \int_{\HH_{\epsilon_1}} \psi d \mu = O(\epsilon_1^{1/N -2 \delta}).
  $$
\end{lem}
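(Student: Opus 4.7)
My plan is to adapt the proof of Lemma 5.5 of \cite{athreya2022counting} to the general $\SLR$-invariant setting by replacing the Masur-Veech thin-part estimates with their Dozier generalizations \cite{dozier2020measure}, combined with the pointwise Siegel-Veech bound of Eskin and Masur. Since the ball $B(0,L_0) \subset \mathbb{C}^2$ is contained in the product $B(0,L_0) \times B(0,L_0)$, I have the domination $\psi(\omega) \leq \hat{f}(\omega)^2$ with $f := \chi_{B(0,L_0)}$ on $\mathbb{C}$, and Theorem 5.1 of \cite{eskin_masur_2001} (recalled inside the proof of Theorem \ref{thm_SV_epsilon}) then yields the pointwise bound
$$
\psi(\omega) \leq \frac{C(L_0)}{|\gamma(\omega)|^{2+2\delta}},
$$
where $\gamma(\omega)$ denotes the shortest saddle connection on $\omega$.

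Next, I would dyadically partition $\HH_{\epsilon_1}$ via $\HH_k := \{\omega : 2^{-k-1} \leq |\gamma(\omega)| \leq 2^{-k}\}$ for $k \geq \lceil \log_2(1/\epsilon_1) \rceil$, reducing the question to controlling $\sum_k 2^{k(2+2\delta)} \mu(\HH_k)$. On each shell I would run the four-case dichotomy of the proof of Theorem \ref{thm_SV_epsilon}: parametrized by $p$ with $\delta < p < 1/N$, I split according to whether an $\MM$-independent saddle connection of length at most $|\gamma|^p$ exists (case $\Omega_0$), and otherwise according to whether $\gamma$ bounds a cylinder and the cylinder's height compared to $\epsilon_0$ (cases $\Omega_1, \Omega_2, \Omega_3$). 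Lemma \ref{lem_Dozier} gives $\mu(\HH_k \cap \Omega_0) = O(2^{-2k(1+p)})$, contributing $O(\epsilon_1^{2p - 2\delta})$ after summation; the choice $p = 1/(2N)$ yields precisely the target exponent $1/N - 2\delta$. In $\Omega_1$ and $\Omega_2$ the sharper pointwise estimate $\hat{f} = O(|\gamma|^{-Np})$ coming from Lemma 3.2 of \cite{athreya2017siegel} combined with $\mu(\HH_k) = O(2^{-2k})$ gives a contribution of order $O(\epsilon_1^{2-2Np})$, which is negligible next to the $\Omega_0$ contribution for our choice of $p$.

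The main obstacle will be the cylinder case $\Omega_3$, exactly as in Theorem \ref{thm_SV_epsilon}. To handle it I would cover the boundary locus by finitely many period coordinate charts (Theorem \ref{NeiPerCa}), pick an ordering $\succ$ consistent with each chart, build an $\MM$-independent basis $(\gamma, \eta, \beta_1, \ldots, \beta_m)$ as in subparagraph $\Omega_3$ of Theorem \ref{thm_SV_epsilon}, and bound the period coordinates of each basis element via Lemmas \ref{doz1} and \ref{doz2}. The resulting Fubini integral is parallel to the one at the end of subparagraph $\Omega_3$, giving $\mu(\HH_k \cap \Omega_3) = O(2^{-3k})$ and hence a contribution of order $O(\epsilon_1^{1-2\delta})$, which is again dominated by $O(\epsilon_1^{1/N - 2\delta})$ under the standing hypothesis $\delta < 1/(2N)$. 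Assembling the four cases and summing over $k$ then yields the announced bound.
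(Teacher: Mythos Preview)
Your proposal is correct and follows essentially the same route as the paper. The paper's proof partitions $\HH_{\epsilon_1}$ into dyadic shells (with base $\sigma$ rather than $2$) and splits each shell into three sets $F(j)$, $G(j)$, $H(j)$ that correspond exactly to your $\Omega_0$, $\Omega_1\cup\Omega_2$, and $\Omega_3$; the threshold $\sigma^{j/2N}$ is your choice $p=1/(2N)$, and for the cylinder case $H(j)$ the paper simply cites the $\Omega_3$ volume computation from the proof of Theorem~\ref{thm_SV_epsilon} rather than rerunning the chart-and-basis argument as you sketch.
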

To prove this lemma we recall a lemma of Athreya, Fairchild and Masur \cite{athreya2022counting} bounding $$N(\omega, R) := \Lambda_{\omega} \cap B(0,R) $$ with respect to the length of the smallest saddle connection.

\begin{lem}[Lemma 5.4 of \cite{athreya2022counting}]
  \label{lem_ContingShortest}
  For any $L_0 >0$ and $\delta>0$, there exist $C(\delta,L_0)$ such that for any $L<L_0$ and any surface $(X,\omega)$ in the stratum we have
  $$
  N(\omega,L) \leq C \left( \frac{L}{l(\omega)} \right)^{1+\delta}.
  $$
\end{lem}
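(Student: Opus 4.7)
The plan is to use the Delaunay triangulation $T$ of $\omega$ as the main combinatorial tool, following the approach of Athreya, Cheung and Masur~\cite{athreya2017siegel}. By the efficiency property (Theorem~\ref{effpath}), each saddle connection $\beta$ with $|\beta|\le L$ is homotopic to a path $P(\beta)$ in $T$ satisfying $|P(\beta)|\le\sqrt{10}\,L$. Since every edge of $T$ is itself a saddle connection of length at least $l(\omega)$, the path $P(\beta)$ consists of at most $k:=\lceil \sqrt{10}\,L/l(\omega)\rceil$ edges. Because the combinatorial data of $T$ (numbers of vertices, edges and triangles) is bounded by a stratum-dependent constant $M$, the task of counting saddle connections of length $\le L$ reduces to counting edge-paths of length at most $k$ in a bounded finite graph, modulo homotopy.

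A naive enumeration of such paths is exponential in $k$, which is far too weak. To bring the exponent down, I will invoke the cylinder structure lemma (Lemma~3.2 of~\cite{athreya2017siegel}): if $P(\beta)$ follows more than $2M+1$ consecutive edges parallel to a single direction, then there is a flat cylinder $C$ whose core curve is parallel to those edges and which is crossed by $\beta$. Consequently $P(\beta)$ decomposes canonically into a bounded-length ``turning'' skeleton, drawn from a stratum-dependent finite list, interspersed with ``wrapping'' runs around circumferences of cylinders of $\omega$. Each wrap consumes at least $l(\omega)$ of the total length $|P(\beta)|$, so the wrap multiplicities $(m_i)$ satisfy $\sum_i m_i=O(L/l(\omega))$, and combinatorial counting then yields a preliminary polynomial bound of the form $O((L/l(\omega))^r)$, with $r$ the number of distinct cylinders visited by the skeleton.

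The final step---reducing this polynomial exponent from the combinatorial value $r$ down to $1+\delta$ for any prescribed $\delta>0$---is the delicate part and the main obstacle of the proof. It is carried out by the dimensional induction of Eskin and Masur~\cite{eskin_masur_2001} Theorem~5.1: saddle connections are stratified by the smallest subspace of relative homology that they generate together with those previously enumerated, and the bound is obtained inductively on the dimension of this subspace, gaining a factor $(L/l(\omega))^\delta$ per level at the cost of a constant $C(\delta,L_0)$. Crucially, the entire argument is pointwise in $\omega$ and depends only on the stratum, so it is insensitive to the choice of $\SLR$-invariant measure and applies in the present setting without modification.
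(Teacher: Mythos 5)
Your proposal is correct and follows essentially the same route as the paper: the paper does not reprove this lemma but imports it verbatim as Lemma 5.4 of Athreya--Fairchild--Masur, whose proof ultimately rests on the dimensional induction of Eskin--Masur Theorem 5.1, exactly the reduction you invoke in your final paragraph (the Delaunay/cylinder preamble is auxiliary rather than the source of the $1+\delta$ exponent). The decisive observation --- that the bound is pointwise in $\omega$ and depends only on the stratum, hence requires no adaptation to a general $\SLR$-invariant measure --- is precisely the justification the paper gives for using the lemma unchanged.
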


We should also control $N(\omega, R)$ with respect to the length of the shortest saddle connection non-parallel to $\gamma$.

\begin{lem}
  \label{lem_ContingSecShortest}
  For $\omega \in \HH$, if $\gamma$ is its shortest saddle connection and $\gamma'$ is the shortest saddle connection non-parallel to $\gamma$, if $\gamma$ does not bound a cylinder or if $\gamma$ bound a cylinder of width at least $\epsilon_0$, then $$
  N(\omega,\epsilon_0)^2 = O(l(\gamma')^{-2N})
  $$
\end{lem}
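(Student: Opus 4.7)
The plan is to mimic the argument used for the $\Omega_1$ locus in Section \ref{sec2}: encode each saddle connection of length at most $\epsilon_0$ as a Delaunay path whose combinatorial length is controlled by $|\gamma'|$. By Theorem \ref{effpath}, any $\beta \in \Lambda_\omega$ with $|\beta| \leq \epsilon_0$ is homotopic to a Delaunay path $P(\beta)$ of length at most $\sqrt{10}\,\epsilon_0$. The key input is the lemma of Athreya, Cheung and Masur recalled in the $\Omega_1$ paragraph: a Delaunay path that follows more than $2M+1$ consecutive edges parallel to the shortest saddle connection $\gamma$ forces $\gamma$ to bound a cylinder $C$ which $\beta$ crosses. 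I would use this fact to bound the number of edges of $P(\beta)$ under each of the two hypotheses on $\gamma$.

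In the first case, where $\gamma$ does not bound any cylinder, the lemma forbids any run of more than $2M+1$ consecutive $\gamma$-parallel edges, so $P(\beta)$ alternates between such runs and edges that are not parallel to $\gamma$. By the definition of $\gamma'$, each non-parallel edge of the triangulation has length at least $|\gamma'|$, hence the number of non-parallel edges in $P(\beta)$ is at most $\sqrt{10}\,\epsilon_0/|\gamma'|$, and the total number of edges in $P(\beta)$ is $O(|\gamma'|^{-1})$. In the second case, where $\gamma$ bounds a cylinder $C$ of width at least $\epsilon_0$, any crossing of $C$ contributes at least $\epsilon_0$ to the length of $P(\beta)$, so $P(\beta)$ can cross $C$ at most $\sqrt{10}$ times. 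Between two crossings the $\gamma$-parallel runs are again bounded by $2M+1$, and any other non-parallel edge still has length at least $|\gamma'|$, so the same accounting yields a total number of edges bounded by $O(|\gamma'|^{-1})$.

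To finish, I would follow the end of the $\Omega_1$ argument: choose a basis of $H_1(X,\Sigma;\mathbb{C})$ consisting of Delaunay edges and express the homology class of each $\beta$ in this basis. The coefficients are linear combinations of the edge multiplicities of $P(\beta)$, so each coefficient is $O(|\gamma'|^{-1})$. Distinct saddle connections produce distinct holonomy vectors and hence distinct homology classes, so the number of $\beta$ with $|\beta| \leq \epsilon_0$ is at most the number of integer vectors of length $N = \dim H_1(X,\Sigma;\mathbb{C})$ with each coordinate of modulus $O(|\gamma'|^{-1})$, which is $O(|\gamma'|^{-N})$. Squaring gives the desired $N(\omega,\epsilon_0)^2 = O(l(\gamma')^{-2N})$.

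The only mild subtlety will be checking that the implied constants are uniform: $M$ and $N$ are determined by the stratum, and $\epsilon_0$ is fixed, so all the bounds coming from the Delaunay combinatorics and from the cylinder-crossing count depend only on the stratum. No new geometric input beyond the Athreya--Cheung--Masur parallel-run lemma and the efficiency property of Delaunay triangulations is needed, which is why this lemma does not require the heavier boundary-of-moduli-space machinery used elsewhere in the paper.
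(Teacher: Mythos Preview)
Your proof is correct and follows essentially the same approach as the paper, which simply refers back to the $\Omega_1$ and $\Omega_2$ paragraphs of Section~\ref{sec2}. Your treatment of the cylinder case is a bit more detailed than the paper's terse $\Omega_2$ sketch (you track crossings of $P(\beta)$ through $C$ rather than immediately concluding $\hat f=O(1)$), but the mechanism---bounding $\gamma$-parallel runs via the Athreya--Cheung--Masur lemma, bounding non-parallel edges by $|\gamma'|$, and then counting integer vectors in relative homology---is identical.
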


\begin{proof}
  The proof has been done in Section \ref{sec2} in the paragraphs \newline \textbf{"The shortest non-parallel saddle connection is longer that a power of the shortest and the shortest is not a cylinder curve"} \newline and \newline \textbf{"The shortest non-parallel saddle connection is longer that a power of the shortest and the shortest is a cylinder curve and the height of the cylinder is at least $\epsilon_0$"}.
\end{proof}

We can now adapt the proof of Lemma 5.5 of \cite{athreya2022counting} to obtain our equivalent Lemma \ref{lem_IntThinpart}.

\begin{proof}[Proof of Lemma \ref{lem_IntThinpart}]
  Following the proof of Athreya, Fairchild and Masur, we choose a real $\sigma \in ]0,1[$, and we define three families of set exhausting $\HH_{\epsilon}$.

  We will use the notation of the proof of theorem \ref{thm_SV_epsilon} that is $\gamma$ is the shortest saddle connection of a translation surface $\omega$ and $\epsilon$ is the shortest saddle connection non-parallel with $\gamma$.

   The first family is $$
  F(j) =: \{ \omega \in \HH,  \sigma^{j+1} \leq |\gamma| \leq \sigma^j, |\epsilon| \leq \sigma^{j/2N} \}.
  $$
  In this case Lemma \ref{lem_IntThinpart} say that $$
  \mu(F(j)) = O(\sigma^{2j+2j/2N})
  $$
  and Lemma \ref{lem_ContingShortest} that for $\omega \in F(j)$$$
  \psi(\omega)= O(\sigma^{-j(2+2\delta)}).
  $$
  This gives $$
  \int_{F(j)} \psi d \mu = O(\sigma^{j(1/N - 2\delta)}).
  $$

  The second family is
  \begin{align*}
    G(j) := \{ \omega \in \HH, \sigma^{j+1} \leq |\gamma| \leq \sigma^j, |\epsilon| \leq \sigma^{j/2N}, \gamma \textrm{ is not on the boundary of a cylinder or} \\
   \textrm{  is on the boundary of a cylinder of height } \geq \epsilon_0 \}.
  \end{align*}
  In this case Lemma \ref{lem_Dozier} gives that $$
  \mu(G(j))= O(\sigma^{2j})
  $$
  and Lemma \ref{lem_ContingSecShortest} indicates that for $\omega \in G(j)$
  $$
  \psi(\omega)= O((\sigma^{j/2N})^{-2N})=O(\sigma^{-j}).
  $$
  And so $$
  \int_{G(j)} \psi d \mu = O(\sigma^j).
  $$
  Finally the third family is
  $$
  H(j) := \{ \omega \in \HH, \sigma^{j+1} \leq |\gamma| \leq \sigma^j, \gamma \textrm{ is on the boundary of a cylinder of height } \leq \epsilon_0 \}.
  $$
  In this case Lemma \ref{lem_ContingShortest} gives that for $\omega \in H(j)$
  $$
  \psi(\omega) = O(\sigma^{-j(2+ 2 \delta)}).
  $$
  Moreover the measure of this set can be commputed as the measure of the set $\Omega_3(k)$ is the last case of the proof of Theorem \ref{thm_SV_epsilon}. Notice that in the computation of the volume of $\Omega_3(k)$ the length of the shortest non-parallel saddle connection does not interfere.
  So $$
  \mu(H(j))= O(\sigma^{3j})
  $$
  and
  $$
  \int_{H(j)} \psi d \mu = O(\sigma^{j(1-2\delta}).
  $$
  Finally choose $j_0$ so that $\sigma^{j_0+1} \leq \epsilon_1 \leq \sigma^{j_0}$ we have
  $$
  \int_{\HH_{\epsilon_1}} \psi d \mu = O(\sum_{j \geq j_0}
   \sigma^j + \sigma^{j(1-2\delta)} + \sigma^{j(1/N - 2\delta)})= O(\sigma^{j_0(1/N - 2\delta)})=O(\epsilon_1^{1/N - 2\delta}).
  $$
\end{proof}

The third one (Lemma 5.9 of \cite{athreya2022counting}) bounds the measure of a set described by four inequalities.

More precisely, given $L$, $\hat{\epsilon}$, $\epsilon'$ and $L' \in \{1/2, 1\}$ define $\Omega(\hat{\epsilon},\epsilon', L, L')$ to be the set of surfaces $\omega$ such that
$\omega$ is $\hat{\epsilon}$-thick (that is has no saddle connection of length less than $\hat{\epsilon}$), and there are $(z, w) \in \left(\Lambda_\omega \cap B(0, L)\right)^2$, where at least one of the following holds:
\begin{itemize}
    \item $1-\epsilon'\leq \frac{|\Imm(w)|}{|\Imm(z)|} \leq 1+ \epsilon'$
    \item $(1-\epsilon')A\leq |z \wedge w| \leq (1+ \epsilon')A$
    \item $|\Imm(z)-L'|<\epsilon'$
    \item $(1-\epsilon') \Imm(z)\leq |\Ree(z)| \leq (1+\epsilon')\Imm(z)$
\end{itemize}
\begin{lem}
  \label{lem_MeasureOmega}
 There exists $C$ and $D>0$ so that for all $\hat{\epsilon},\epsilon'$
 $$
\mu(\Omega(\hat{\epsilon}, \epsilon', L, L')) \leq C \frac{\epsilon'}{\hat{\epsilon}^D}
 $$
\end{lem}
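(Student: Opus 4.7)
The plan is to decompose $\Omega(\hat\epsilon,\epsilon',L,L') = \Omega_1 \cup \Omega_2 \cup \Omega_3 \cup \Omega_4$, with $\Omega_i$ the subset of $\HH_{\hat\epsilon}$ on which inequality~$(i)$ is satisfied by some pair $(z,w) \in \Lambda_\omega^2 \cap B(0,L)^2$, and to bound each piece separately. Lying in $\Omega_i$ forces a saddle connection (for $i = 3, 4$) or an ordered pair of saddle connections (for $i = 1, 2$) to have period coordinates in an explicit subset $U_i \subset \mathbb{C}$ or $\mathbb{C}^2$: the horizontal strip $U_3 = \{z \in B(0,L) : |\Imm z - L'| < \epsilon'\}$ of Lebesgue measure $O(\epsilon' L)$; the cone-thickening $U_4 = \{z \in B(0,L) : (1-\epsilon')\Imm z \leq |\Ree z| \leq (1+\epsilon')\Imm z\}$ of measure $O(\epsilon' L^2)$; and $\epsilon'$-tubular neighbourhoods $U_1, U_2 \subset B(0,L)^2$ of codimension-one subvarieties of $\mathbb{C}^2$, each of Lebesgue measure $O(\epsilon')$ up to polynomial factors in $L$ and $A$.

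To convert these Lebesgue bounds into $\mu$-measure bounds (no Siegel--Veech formula being available for an arbitrary $\SLR$-invariant $\mu$), I would cover the area-normalised thick part $\{\omega \in \HH_{\hat\epsilon} : 1/2 \leq \operatorname{area}(\omega) \leq 2\}$---compact by Mumford's criterion---by finitely many period coordinate charts $V_1, \ldots, V_{K(\hat\epsilon)}$ of the ambient affine orbifold $\MM$, with $K(\hat\epsilon)$ at worst polynomial in $1/\hat\epsilon$. On each $V_k$, $\mu$ is a constant multiple of Lebesgue measure on the linear subspace $T_\omega \MM$ expressed in period coordinates, and the set of relative homology classes realised as saddle connections of length at most $L$ is locally constant on $V_k$ of cardinality at most $N(\omega,L) \leq C(L/\hat\epsilon)^{1+\delta}$ by Lemma~\ref{lem_ContingShortest}. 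For each candidate class $\gamma$ (resp.\ ordered pair $(\gamma,\beta)$), the map $\omega \mapsto z_\gamma(\omega)$ (resp.\ $\omega \mapsto (z_\gamma,z_\beta)(\omega)$) is linear in period coordinates, so a Fubini argument on $T_\omega\MM$ bounds $\mu(\{\omega \in V_k : z_\gamma(\omega) \in U_i\})$ (resp.\ the pair-version) by $C_k |U_i|$, the constant uniformly controlled thanks to the area normalisation. Summing over the $\leq N(\omega,L)^{a_i}$ candidates (with $a_i = 1$ for $i = 3, 4$ and $a_i = 2$ for $i = 1, 2$) and then over the $K(\hat\epsilon)$ charts gives $\mu(\Omega_i) = O(\epsilon'/\hat\epsilon^D)$ for some $D = D(\delta, L, A)$, as required.

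The main obstacle I expect is the treatment of pairs $(\gamma, \beta)$ that are \emph{not} $\MM$-independent: $z_\beta$ is then an explicit $\mathbb{C}$-linear function of $z_\gamma$ on $T_\omega\MM$, and the two-variable constraint cutting out $U_1$ or $U_2$ collapses to a constraint on $z_\gamma$ alone. A case-by-case inspection of the $\MM$-dependence relations realisable on a chart $V_k$, combined with the explicit form of inequalities~(1)--(2), is needed to show that the residual constraint remains of codimension one in $\mathbb{C}$ (so still of Lebesgue measure $O(\epsilon')$), the degenerate cases being either geometrically impossible (e.g.\ parallel pairs cannot satisfy condition~(2) since $A > 0$) or lying on a lower-dimensional subset that can be discarded from the sum.
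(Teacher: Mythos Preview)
Your plan coincides with the paper's in overall strategy: cover the $\hat\epsilon$-thick part of $\MM$ by finitely many period coordinate charts, reduce each of the four inequalities to a hyperplane-neighbourhood Lebesgue estimate in those coordinates, and sum over polynomially many (in $1/\hat\epsilon$) discrete choices of saddle connection data.

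The paper's implementation differs from yours in one useful way that also dissolves your ``main obstacle''. Instead of enumerating homology classes $\gamma$ and applying Fubini to the map $\omega\mapsto z_\gamma(\omega)$---which forces you to control that map's Jacobian uniformly over all $\gamma$ realised on the chart---the paper fixes on each chart the \emph{Delaunay triangulation} (constant on a small enough neighbourhood), selects a maximal $\MM$-independent family $t_1,\ldots,t_k$ of its edges as the period coordinates in which $\mu$ is Lebesgue, and writes every saddle connection of length $\le L$ as $\sum z_i' t_i$ with coefficients $z_i' = O(1/\hat\epsilon)$ by the efficiency property (Theorem~\ref{effpath}) and the fact that $|t_i|\in[\hat\epsilon,1/\hat\epsilon]$. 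After fixing a coefficient tuple $(z_i',w_i')$, each of the four conditions becomes a \emph{fixed} linear inequality in the real and imaginary parts of the $z_{t_i}$: e.g.\ condition~(1) yields $\bigl|\sum_i(z_i'-w_i')y_i\bigr|\le O(\epsilon'/\hat\epsilon^2)$. This is already a slab about a hyperplane in the fixed coordinate system, so no Jacobian enters and the volume bound is immediate; one then sums over the $O(\hat\epsilon^{-k})$ coefficient tuples. Your $\MM$-dependence concern reduces to the requirement $(z_i')\ne(w_i')$ as $k$-tuples, which the paper simply imposes; the excluded case is exactly $z=w$ on $\MM$, where as you observe condition~(1) is vacuous. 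Your route would in principle yield a smaller exponent $D$ (since $N(\omega,L)=O(\hat\epsilon^{-(1+\delta)})$ beats $O(\hat\epsilon^{-k})$), at the cost of the uniform Jacobian control you would still need to supply.
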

\begin{proof}
  As the $\hat{\epsilon}$-thick part of the moduli space is compact and the affine manifold $\MM$ is closed, their intersection is also compact.
  Each point has a neighborhood in which there is a triangulation that is a Delaunay triangulation for every point in this neighborhood. We fix a finite cover of this family of neighborhood.
  Then taking one of these neighborhood, we will work with a fixed Delaunay triangulation.

Let's take a maximal family of $\MM$-independent saddle connections which are edges of the Delaunay triangulation $t_1, t_2,\cdots, t_k$ so that the $\mu$ measure is the Lebesgue measure under their period coordinates. We decompose the period coordinates as $t_j=x_j + i y_j$.

  Then we complete this family with $t_{k+1},\cdots, t_n$ to have a basis of the $H_1(X,\Sigma,\mathbb{C})$.

  The length of any $t_i$ is bounded below by $\hat{\epsilon}$ and by the Paragraph 4 of \cite{masur_smilie} also bound above by $1/\hat{\epsilon}$.

  Then if two saddle connections $z, w$ have length less than $L$ and satisfies the first inequality, we can decompose $z=\sum_{i=1}^n z_i t_i$ and $w=\sum_{i=1}^n w_i t_i$ in the homology, with integer coefficient.

  The coefficients are $O(\frac{1}{\hat{\epsilon}})$ since the Delaunay triangulation is efficient.

  Since every $t_i$, for $k<i\leq n$ are linear combinations of the $t_j$ for $1 \leq j \leq k$, we can write $z=\sum_{i=1}^k z_i' t_i$ and $w=\sum_{i=1}^k w_i' t_i$ with coefficients which are still $O(\frac{1}{\hat{\epsilon}})$. We fix a pair of tuple of coefficients $(z_i',w_i')$ with $z_i' \ne w_i'$.

  We are interested in computing the Lebesgue volume of vectors $s_i$ of lengths less than $L$ such that $$
  \left \vert \frac{\sum_{i=1}^k z_i' y_i}{\sum_{i=1}^k w_i' y_i} - 1 \right \vert \leq \epsilon'
$$

  This volume is smaller than the volume of the set of vectors of lengths less than $L$ such that $$
  \left \vert \sum_{i=1}^k (z_i'-w_i') y_i \right \vert \leq \epsilon' O(\frac{1}{\hat{\epsilon}^2})
$$

  This is the intersection of the sphere of radius $L$ centered at $0$ and the $O(\frac{\epsilon'}{\hat{\epsilon}^2})$ neighborhood of a hyperplane in a $k$ dimensional coordinate vector space, whose volume is $O(\frac{\epsilon'}{\hat{\epsilon}^2})$.
  The number of choices of coefficient is $O(\frac{1}{\hat{\epsilon}^k})$. By summing the estimates for each choice, we get the bound of the first point.

  The other points lead to same the kind of inequalities and are demonstrated  similarly.

\end{proof}

At this point we have shown that
  given $\mu$ an ergodic $\SLR$-invariant measure, there is a $c_{\mu}(A)$ such that for $\mu$-almost every $\omega \in \HH$
  $$
  \underset{R \to \infty}{\lim} \frac{N_{A}(\omega, R)}{R^2} = c_{\mu}(A).
  $$

\section{Effectivization of the counting}
\label{sec5}

  In order to get the power saving error term of Theorem \ref{thm2}, we mainly need to control two quantities.
The first one is the difference between the circle average and the integral of the Siegel-Veech transform of $h_A$. To do this we will use an effective version of Nevo ergodic theorem \ref{thm_Nevo}.
The second quantity is the sum of the four error terms $e_t^i$.

   \subsection{Effective Nevo ergodic theorem}

  We define now the class of function, we should work with.

  \begin{dfnt}
    A function $f \in L^2(\HH,\mu)$ is called K-$smooth$ of degree one if
   $$
   \pi_{\HH}(\theta)f := \underset{\theta \to 0}{\lim}\frac{1}{\theta}(r_{\theta}^* f -f)
   $$
    exists with respect to the $L^2(\HH,\mu)$-norm.

    We define the \emph{Sobolev norm} by $$
    S_K(f)^2 = \|f \|^2_2 +\| \pi_{\HH}(\theta)f \|_2^2
    $$

    The $K$\emph{-Sobolev space} is then define as $$
    \Sigma_{K}(\HH)=\{f \in L^2(\HH,\mu), S_{K}(f)< +  \infty\}.
    $$
    For a function $g$ from $\mathbb{C}^2$ to $\mathbb{R}$ we define $$
    \partial_{\theta} g := \underset{\theta \to 0}{\lim} \frac{r_{\theta}^{*}g-g}{\theta}.
    $$
    Note that $(\partial_{\theta} g)^{SV}=\pi_{\HH}(\theta) \hat{g}$.
  \end{dfnt}

  We can bound the Siegel-Veech transform of function with compact support and its $S_K$-norm of  with data from the original function.

  \begin{lem}
    \label{lem_SKnorme}
    Given $\alpha_1>2$, let $g: \mathbb{C}^2 \to \mathbb{R}$ be function with compact support and let $\epsilon>0$.
     We have that
     $$
     |\hat{g}(\omega)| \leq C_{supp(g)} l(\omega)^{- \alpha_1} \|g\|_{\infty}
     $$
     and if $g$ is differentiable,
     $$
    S_K \left( \hat{g}\chi_{l(\cdot) \geq \epsilon}-\int_{\HH} \hat{g} \chi_{l(\cdot) \geq \epsilon} d\mu \right)^2 = O( \frac{\|g\|_{\infty}^2 +\|\partial_{\theta}g\|_{\infty}^2}{\epsilon^{2 \alpha_1}})
    $$
    where the implied constant depend on the support.

  \end{lem}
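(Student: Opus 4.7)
The plan is to separate the two assertions and handle them in order. For the pointwise bound, fix $R > 0$ with $\operatorname{supp}(g) \subset B(0,R) \times B(0,R) \subset \mathbb{C}^2$. Bounding $|g|$ by $\|g\|_\infty$ on its support and counting pairs of saddle connections immediately gives
\[
|\hat{g}(\omega)| \;\leq\; \|g\|_\infty \cdot N(\omega, R)^2 .
\]
Setting $\delta := (\alpha_1 - 2)/2 > 0$ and applying Lemma \ref{lem_ContingShortest}, which provides $N(\omega, R) \leq C(R, \delta)\, l(\omega)^{-(1+\delta)}$, then squaring, yields the first inequality with $C_{\operatorname{supp}(g)} = C(R,\delta)^2 R^{\alpha_1}$.

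For the Sobolev-norm estimate, set $F := \hat{g}\chi_{l(\cdot) \geq \epsilon} - \int_{\HH}\hat{g}\chi_{l(\cdot) \geq \epsilon}\, d\mu$. The key observation is that the rotation $r_\theta$ preserves the Euclidean length of every holonomy vector, hence $l \circ r_\theta = l$ and the truncation $\chi_{l(\cdot) \geq \epsilon}$ is $r_\theta$-invariant; combined with the trivial invariance of constants, this gives
\[
\pi_{\HH}(\theta) F \;=\; \pi_{\HH}(\theta)(\hat{g}) \cdot \chi_{l(\cdot) \geq \epsilon} \;=\; (\partial_\theta g)^{SV} \chi_{l(\cdot) \geq \epsilon} ,
\]
where the second equality is the identity $\pi_{\HH}(\theta)\hat{g} = (\partial_\theta g)^{SV}$ recalled in the definition. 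Since $\partial_\theta g$ is compactly supported in the same ball, applying the first inequality to both $g$ and $\partial_\theta g$ yields on the set $\{l \geq \epsilon\}$ the pointwise bounds $|\hat{g}(\omega)| \leq C\epsilon^{-\alpha_1} \|g\|_\infty$ and $|(\partial_\theta g)^{SV}(\omega)| \leq C\epsilon^{-\alpha_1} \|\partial_\theta g\|_\infty$. Using $\|F\|_2^2 \leq \|\hat{g}\chi_{l \geq \epsilon}\|_2^2$ (mean-subtraction is $L^2$-contractive) and integrating the squares against the finite measure $\mu$ produces the required $O\bigl((\|g\|_\infty^2 + \|\partial_\theta g\|_\infty^2)/\epsilon^{2\alpha_1}\bigr)$ bound on $S_K(F)^2$.

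The only delicate point is the commutation of $\pi_{\HH}(\theta)$ with multiplication by $\chi_{l(\cdot) \geq \epsilon}$, since the derivative is an $L^2$-limit rather than a pointwise operation. Rotation invariance of $l$ resolves this at once: the identity $r_\theta^*(\hat{g}\chi_{l \geq \epsilon}) = (r_\theta^*\hat{g})\chi_{l \geq \epsilon}$ lets the difference quotient factor as $\theta^{-1}(r_\theta^*\hat{g} - \hat{g})\chi_{l \geq \epsilon}$, which converges in $L^2$ to $(\partial_\theta g)^{SV}\chi_{l \geq \epsilon}$ once $\hat{g}$ is itself $K$-smooth --- a property that follows from the first inequality applied to $\partial_\theta g$ together with the finiteness of $\mu$. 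Beyond this verification, the argument is routine bookkeeping around Lemma \ref{lem_ContingShortest}.
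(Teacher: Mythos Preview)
Your proof is correct and follows essentially the same approach as the paper: both derive the pointwise bound from Lemma \ref{lem_ContingShortest} via $|\hat{g}|\leq \|g\|_\infty N(\omega,R)^2$, and both handle the Sobolev norm by using rotation-invariance of $l$ to reduce $\pi_{\HH}(\theta)(\hat g\chi_{l\geq\epsilon})$ to $(\partial_\theta g)^{SV}\chi_{l\geq\epsilon}$ and then applying the first bound on the thick part. Your treatment of the commutation of $\pi_{\HH}(\theta)$ with the cutoff is in fact more careful than the paper's (which simply asserts it), and your use of the $L^2$-contractivity of mean subtraction is a slight streamlining of the paper's explicit two-term expansion of $\|F\|_2^2$.
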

  \begin{proof}
    For the first point, taking any surface $\omega \in \HH$, if a pair of saddle connection are in $supp(g)$, then they both belong to a compact set $B \subset \mathbb{R}^2$.
      Hence using Lemma \ref{lem_ContingShortest}, we got
      $$
      | \hat{g}(\omega) | \leq \|g\|_{\infty} (\# \Lambda(\omega) \cap B)^2 \leq C_{supp(g)} l(\omega)^{-\alpha_1}\|g\|_{\infty}
      $$

      For the second point

    \begin{align*}
      \|\hat{g}\chi_{l(\cdot) \geq \epsilon}-\int_{\HH} \hat{g} \chi_{l(\cdot) \geq \epsilon} d\mu \|_2^2 & \leq \int_{l(\cdot) \geq \epsilon} \hat{g}^2 d\mu + \left( \int_{\HH} \hat{g} \chi_{l(\cdot) \geq \epsilon} d\mu \right)^2\\
       & \leq O(\int_{l(\cdot) \geq \epsilon} l(\omega)^{-2\alpha_1} \|g\|_{\infty}^2 d\mu) + O( \left( \int_{l(\cdot) \geq \epsilon} l(\omega)^{-\alpha_1} \|g\|_{\infty}  d\mu\right)^2)\\
       & = O (\|g\|_{\infty}^2  \epsilon^{- 2 \alpha_1})
    \end{align*}
Where the second inequality comes from the first point of this lemma.

Then note that  $\pi_{\HH}(\theta)\chi_{l(\cdot) \geq \epsilon}=0$ and $\pi_{\HH}(\theta)\hat{g}= (\partial_{\theta}g)^{SV}$ and so $$\pi_{\HH}(\theta)\hat{g}\chi_{l(\cdot) \geq \epsilon}=(\partial_{\theta}g)^{SV}\chi_{l(\cdot) \geq \epsilon}.$$

    So, in the same flavor as the previous estimate
    \begin{align*}
  \|\pi_{\HH}(\theta)\hat{g}\chi_{l(\cdot) \geq \epsilon} \|_2^2 & = \int_{l(\cdot) \geq \epsilon} \left( \partial_{\theta}g^{SV} \right)^2 d\mu \\
   & \leq \int_{l(\cdot) \geq \epsilon}  l(\omega)^{-2 \alpha_1} \|\partial_{\theta}g\|_{\infty}^2 d\mu \\
   &= O(\epsilon^{-2 \alpha_1} \|\partial_{\theta}g\|_{\infty}^2)
 \end{align*}
This end the demonstration of this lemma.

  \end{proof}

The following statement is an effective version of \ref{thm_Nevo} which is included in Theorem 3.5 of \cite{nevo2020effective}.

   \begin{thm}
     \label{thm_effNevo}
     There is a $\lambda' >0$ and a $C>0$ such that for all $t>1$ and any $f \in \Sigma_K(\HH)$, we have $$
     \| A_t f - \int_{\HH} f d \mu \|^2_2 \leq C e^{-2 \lambda' t} S_K(f)^2.
     $$
     Furthermore, if $(t_n)\subset{\mathbb{R}_+}$ and $\eta_1$ satisfy
     \begin{equation}
       \label{eq_ConditionTemp}
       \sum_{n \in \mathbb{N}} e^{- \lambda' \eta_1 t_n} < \infty
    \end{equation}
     then for almost all $\omega \in \HH$ there is $n_0=n_0(\omega)$ such that for all $n \geq n_0$
     $$
     | A_{t_n} f(\omega) - \int_{\HH} f d \mu | \leq C e^{-2(\eta-\frac{\eta_1}{2}) \lambda' t_n} S_K(f)^2
     $$
     with $\eta=\frac{1}{\lambda'+1}$
   \end{thm}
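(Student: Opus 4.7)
The plan is to prove both parts in the standard two-step manner for effective ergodic theorems with spectral gap: first the $L^2$ decay of circle averages for $K$-Sobolev vectors, and then pointwise a.e.\ decay along $(t_n)$ via a Chebyshev--Borel-Cantelli argument.

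For the first step, I would reduce to the mean-zero case by replacing $f$ with $f - \int_{\HH} f\, d\mu$, working in $L^2_0(\HH,\mu)$. Note that
$$
A_t f(\omega) = \frac{1}{2\pi} \int_0^{2\pi} f(g_t r_\theta \omega)\, d\theta
$$
is the $K$-projection of $\pi(g_t)f$, where $\pi$ denotes the unitary $\SLR$-representation on $L^2_0(\HH,\mu)$. The non-trivial input --- exactly what Theorem 3.5 of \cite{nevo2020effective} supplies --- is a quantitative spectral gap $\lambda' > 0$: matrix coefficients of $\pi(g_t)$ on the space $\Sigma_K(\HH)$ of degree-one $K$-smooth vectors decay like $e^{-\lambda' t}$, with the implicit constant governed by $S_K(\cdot)$. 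Integrating this decay against the $K$-invariant unit in $L^2(K)$ yields
$$
\| A_t f - \textstyle\int_{\HH} f\, d\mu \|_2^2 \leq C e^{-2 \lambda' t} S_K(f)^2,
$$
which is the first assertion.

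For the second step, fix $(t_n)$ and $\eta_1$ satisfying \eqref{eq_ConditionTemp}, and apply Chebyshev's inequality to the exceptional sets
$$
B_n := \left\{ \omega : |A_{t_n} f(\omega) - \textstyle\int_{\HH} f\, d\mu|^2 > C\, e^{-4(\eta - \eta_1/2)\lambda' t_n}\, S_K(f)^4 \right\}.
$$
The $L^2$-bound from the first step gives $\mu(B_n) \leq e^{-(2 - 4\eta + 2\eta_1)\lambda' t_n}$, and the specific choice $\eta = 1/(\lambda'+1)$ is calibrated so that this exponent is dominated by $-\lambda' \eta_1 t_n$ up to an overall constant. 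Consequently $\sum_n \mu(B_n) < \infty$ by hypothesis \eqref{eq_ConditionTemp}, and Borel-Cantelli produces a $\mu$-full-measure set off which only finitely many $B_n$ occur. On the complement one has, for all $n \geq n_0(\omega)$, the pointwise bound of the theorem.

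The main obstacle is the $L^2$ decay estimate: it rests on quantitative matrix-coefficient decay for the $\SLR$-action on $L^2_0(\HH,\mu)$, which is a non-trivial representation-theoretic fact that must be uniform in $\mu$ and is the core content borrowed from \cite{nevo2020effective}. Once that decay is in hand, the passage to effective pointwise convergence is a routine Chebyshev--Borel-Cantelli argument; the only delicacy is the choice of $\eta$ as a function of $\lambda'$, made precisely so as to balance the $L^2$ decay rate against the summability margin afforded by $\eta_1$.
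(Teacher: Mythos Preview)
The paper does not give a proof of this theorem: it is stated as a direct quotation of Theorem~3.5 of \cite{nevo2020effective}, with no argument supplied. Your sketch---spectral gap/matrix-coefficient decay for the $L^2$ estimate, followed by Chebyshev and Borel--Cantelli for the pointwise bound along $(t_n)$---is precisely the standard mechanism behind such effective ergodic theorems and is consistent with what the cited reference does, so there is nothing to compare beyond noting that you have correctly reconstructed the expected proof outline.

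One small caution on the arithmetic in your second step: your Chebyshev bound $\mu(B_n)\le e^{-(2-4\eta+2\eta_1)\lambda' t_n}$ (up to the harmless factor $S_K(f)^{-2}$) is summable via \eqref{eq_ConditionTemp} provided $2-4\eta+2\eta_1\ge \eta_1$, i.e.\ $\eta\le \tfrac12+\tfrac{\eta_1}{4}$. The particular value $\eta=\tfrac{1}{\lambda'+1}$ does not automatically satisfy this for all $\lambda'>0$ and all $\eta_1>0$, so your phrase ``calibrated so that this exponent is dominated by $-\lambda'\eta_1 t_n$'' is slightly too quick; the precise constants in the statement as written are somewhat delicate (and the paper inherits them verbatim from \cite{nevo2020effective}). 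This does not affect the correctness of the method, only the bookkeeping of exponents.
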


   From now on, we fix a sequence $(t_n)$ and an $\eta_1$ satisfying equation \ref{eq_ConditionTemp}. We will call $\lambda= \frac{\eta-\eta_1 /2}{\lambda'+1}$.

  \subsection{Circle average convergence}

  The aim of this subsection is to estimate $$
  \left\vert A_t \hat{h}_A(\omega) - \int_{\HH}\hat{h}_A d \mu \right\vert
  $$

  To begin, we will need a bounding lemma (Lemma \ref{lem_CircleAverageThin} below) which is based on the following theorem.

  \begin{thm}[See \cite{eskin_masur_2001} Thm 5.2 and Lem 5.5 and \cite{VeechQuad} Cor 2.8]
    \label{thm_IntSys}
      For any $\omega \in \HH$, and for any $1 \leq \alpha < 2$,$$
      \underset{t>0}{\sup} A_t(l^{-\alpha})(\omega) < \infty.
      $$
      The bound can be taken uniform as $\omega$ ranges over compact sets in $\HH$. Moreover,
      $$l(\cdot)^{-\alpha} \in L^1(\HH,\mu).$$

  \end{thm}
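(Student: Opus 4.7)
I will treat the three assertions in order. For integrability $l^{-\alpha}\in L^1(\HH,\mu)$, the layer-cake formula reduces matters to a tail estimate on the thin part: it suffices to show $\mu(\{l\le \epsilon\}) = O(\epsilon^2)$, because then $\int l^{-\alpha}d\mu = \alpha\int_0^\infty s^{-\alpha-1}\mu(\{l\le s\})\,ds$ converges for any $\alpha<2$. Such a quadratic bound on the thin part holds for any ergodic $\SLR$-invariant measure by Dozier's regularity estimate invoked in Lemma \ref{lem_Dozier} (taking the second length parameter to be a fixed constant, or more directly from the single-saddle-connection estimate behind it).

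For the pointwise bound $\sup_{t>0}A_t(l^{-\alpha})(\omega)<\infty$, the Eskin--Masur--Veech strategy is to establish a Margulis-type contraction inequality: I would show that there exist $t_0>0$, $c\in (0,1)$, and $D>0$ such that for every $\omega\in\HH$,
$$A_{t_0}(l^{-\alpha})(\omega)\ \le\ c\,l^{-\alpha}(\omega)\ +\ D.$$
Iterating gives $A_{nt_0}(l^{-\alpha})(\omega)\le c^n l^{-\alpha}(\omega)+D/(1-c)$, which is uniformly bounded in $n$, and continuity of $t\mapsto A_t f$ together with the semigroup structure of $A_t$ on $L^\infty$ transfers the bound to all $t>0$.

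The heart of the argument is the geometric computation behind this contraction. For a single holonomy vector $z\in\mathbb{C}^*$, one has
$$\frac{1}{2\pi}\int_0^{2\pi}|g_{t_0}r_\theta z|^{-\alpha}\,d\theta \;=\; |z|^{-\alpha}\cdot I(t_0,\alpha),\qquad I(t,\alpha):=\frac{1}{2\pi}\int_0^{2\pi}\!\bigl(e^{2t}\cos^2\phi+e^{-2t}\sin^2\phi\bigr)^{-\alpha/2}d\phi.$$
Splitting the $\phi$-integral into the narrow band $\{|\cos\phi|\lesssim e^{-t_0}\}$ (where the integrand is $\sim e^{\alpha t_0}$ on a set of measure $O(e^{-t_0})$) and its complement shows that $I(t_0,\alpha)\to 0$ as $t_0\to\infty$ precisely because $\alpha<2$; this is the step where the hypothesis $\alpha<2$ is essential.

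The main obstacle is that $l^{-\alpha}=\max_\gamma|z_\gamma|^{-\alpha}$ is a maximum, not a sum, so applying the single-vector estimate naively gives only $\sum_\gamma$-type bounds. I would bypass this by splitting $\Lambda_\omega$ into short and medium saddle connections: the very short ones (of which there are only $O(1)$ by a no-small-triangles argument, since two independent very short saddle connections would force extra structure) dominate $l^{-\alpha}$ and can be handled one at a time via the circle-average identity above, producing the contractive term $c\,l^{-\alpha}$; the medium ones contribute only a bounded additive $D$, controlled using the counting bound of Lemma \ref{lem_ContingShortest}. Finally, uniformity on compacts $K\subset\HH$ is automatic: $l$ is continuous and strictly positive on $\HH$, so $l\ge \epsilon_K>0$ on $K$, and the right-hand side of the iterated Margulis inequality is therefore uniformly bounded on $K$.
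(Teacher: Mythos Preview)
The paper does not give its own proof of this theorem: it is stated as a citation of Eskin--Masur (Thm.~5.2 and Lem.~5.5) and Veech (Cor.~2.8), and used as a black box. So there is no paper proof to compare against; I will comment on your sketch on its own merits.

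Your outline follows the Eskin--Masur strategy, and the integrability argument via the layer--cake formula and the quadratic thin-part bound is fine. The single-vector computation of $I(t_0,\alpha)$ and the identification of where $\alpha<2$ enters are also correct.

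There is, however, a genuine gap in the iteration step. The operators $A_t$ do \emph{not} form a semigroup: $A_{t+s}\ne A_t\circ A_s$, and there is no ``semigroup structure of $A_t$ on $L^\infty$'' to invoke. Passing from the one-step contraction $A_{t_0}(l^{-\alpha})\le c\,l^{-\alpha}+D$ to a bound on $A_{nt_0}(l^{-\alpha})$ requires an additional argument. What Eskin--Masur actually do is use the Cartan ($KAK$) decomposition: for a $K$-invariant function $u$, the double average $A_t(A_s u)$ can be rewritten as an average of $A_r u$ over a range of $r$ near $t+s$, with controlled Jacobian. This is the content of their iteration lemma, and it is what allows the contraction to propagate. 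Without it, the step ``iterating gives $A_{nt_0}(l^{-\alpha})\le c^n l^{-\alpha}+D/(1-c)$'' is unjustified.

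A secondary point: your treatment of the max-versus-sum issue is closer to a heuristic than an argument. Eskin--Masur do not work directly with $l^{-\alpha}$ but with an auxiliary ``system of integral inequalities'' involving a finite family of functions indexed by complexes of short saddle connections; the contraction is established for that system and then specialized. Your short/medium dichotomy points in the right direction but would need substantially more detail to stand on its own.
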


  Using this theorem we can bound the circle average of a Siegel-Veech transform of a function with compact support on the thin part.
  \begin{lem}
    \label{lem_CircleAverageThin}
    There is an $\alpha_2>0$ such that for any $g: \mathbb{C}^2 \to \mathbb{R}$ be a positive function with compact support, any $\epsilon>0$, any $\omega \in \HH$ there is a constant $K(\omega,g)$ with
    $$
    |A_{t_n}(\hat{g} \chi_{l(\cdot)\leq \epsilon})| \leq \epsilon^{\alpha_2} K(\omega,g).
    $$
    Moreover if $h \leq g$ then $K(\omega,h) \leq K(\omega,g)$.
  \end{lem}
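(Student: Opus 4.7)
The plan is to bound the circle average via Hölder's inequality, extracting a positive power of $\epsilon$ from the characteristic function $\chi_{l \leq \epsilon}$ while keeping the rest uniformly controlled in $t$.

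First, I will pick conjugate exponents $p, q > 1$ with $1/p + 1/q = 1$, and apply Hölder's inequality to the probability measure $\frac{1}{2\pi}d\theta$:
\[
A_{t_n}(\hat g \chi_{l \leq \epsilon})(\omega) \leq A_{t_n}(\hat g^p)(\omega)^{1/p} \cdot A_{t_n}(\chi_{l \leq \epsilon})(\omega)^{1/q}.
\]
For the second factor, the key observation is that $\chi_{l \leq \epsilon}(\omega) \leq (\epsilon/l(\omega))^{\alpha}$ for every $\alpha > 0$, since the left-hand side vanishes except where $\epsilon/l \geq 1$. Choosing $1 \leq \alpha < 2$ and invoking Theorem \ref{thm_IntSys} yields
\[
A_{t_n}(\chi_{l \leq \epsilon})(\omega) \leq \epsilon^{\alpha} A_{t_n}(l^{-\alpha})(\omega) \leq \epsilon^{\alpha} K_1(\omega),
\]
where $K_1(\omega) := \sup_{t} A_t(l^{-\alpha})(\omega)$ is finite for every $\omega \in \HH$.

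For the first factor, the corollary to Theorem \ref{thm_SVtr2} gives $\hat g \in L^{1+\kappa'}(\HH, \mu)$ for some $\kappa' > 0$. I will then take $p$ with $1 < p < 1+\kappa'$, so that $\hat g^{p} \in L^{r}(\HH, \mu)$ for $r := (1+\kappa')/p > 1$. A standard $L^{r}$-maximal inequality for the circle averages $A_{t}$ of the $\SLR$-action (or, equivalently, applying Nevo's pointwise ergodic theorem to a $K$-finite approximation of $\hat g^p$) then guarantees that $K_2(\omega, g) := \sup_{t} A_{t}(\hat g^{p})(\omega)$ is finite for $\mu$-almost every $\omega$. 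Combining the two estimates gives
\[
A_{t_n}(\hat g \chi_{l \leq \epsilon})(\omega) \leq \epsilon^{\alpha/q}\, K_2(\omega, g)^{1/p} K_1(\omega)^{1/q},
\]
which is the desired bound with $\alpha_2 := \alpha/q > 0$ and $K(\omega, g) := K_2(\omega, g)^{1/p} K_1(\omega)^{1/q}$; note that $\alpha_2$ can be taken to be any positive number strictly less than $2\kappa'/(1+\kappa')$ by pushing $p$ toward $1+\kappa'$ and $\alpha$ toward $2$.

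The monotonicity statement is automatic: if $h \leq g$, then $\hat h \leq \hat g$ pointwise, whence $\hat h^{p} \leq \hat g^{p}$, so $K_2(\omega, h) \leq K_2(\omega, g)$ and consequently $K(\omega, h) \leq K(\omega, g)$. The main delicate step will be verifying the pointwise almost-sure finiteness of $K_2(\omega, g)$: Theorem \ref{thm_Nevo} as stated requires $K$-finiteness, which the Siegel--Veech transform of a generic compactly supported $g$ need not possess, so I will either decompose $g$ into $K$-isotypic pieces and apply Nevo to each, or directly invoke the $L^{r}$ spherical maximal inequality for the $\SLR$-action available in this semisimple setting.
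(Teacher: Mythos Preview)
Your approach is essentially the paper's: bound $\chi_{l\leq\epsilon}\leq \epsilon^{\alpha} l^{-\alpha}$, apply H\"older's inequality to the circle average, and control the factor carrying $l^{-\alpha}$ via Theorem~\ref{thm_IntSys}. The paper orders the two steps slightly differently---it first inserts $\epsilon^{\alpha_2}l^{-\alpha_2}$ and \emph{then} H\"older-splits $l^{-\alpha_2}$ away from $\hat g\,\chi_\epsilon$ with exponents $\bigl(\tfrac{1+\kappa}{\kappa},\,1+\kappa\bigr)$---but the resulting two factors are the same as yours up to harmless presence of an extra $\chi_\epsilon\leq 1$.

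The only substantive difference concerns your ``delicate step.'' You aim for a maximal inequality giving $\sup_{t}A_t(\hat g^{\,p})(\omega)<\infty$. The paper avoids this: it observes that, by the pointwise ergodic theorem already established (Theorem~\ref{cor_LimAver}), the quantity $A_{t_n}\bigl(\hat g^{\,1+\kappa}\chi_\epsilon\bigr)(\omega)$ \emph{converges} as $n\to\infty$, hence is bounded in $n$. Since the lemma only asks for a bound along the fixed sequence $(t_n)$, convergence is enough and no maximal inequality is needed. Strictly speaking, Theorem~\ref{cor_LimAver} is stated for Siegel--Veech transforms of $C_c$ functions, so what is really being used is its proof---Nevo's theorem plus approximation by $K$-finite functions applied to an $L^{1+\kappa'}$ function---which is precisely the $K$-isotypic decomposition you propose at the end. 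Monotonicity in $g$ follows exactly as you say.
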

  \begin{proof}
    Denote by $\chi_{\epsilon} := \chi_{l(\cdot)\leq \epsilon}$ and observe that, by taking $\alpha_2$ such that $\alpha_2 \kappa < 2(1+\kappa)$, one has
    \begin{align*}
      |A_t(\hat{g}\chi_{\epsilon})(\omega)| & = \frac{1}{2\pi} \int_{0}^{2\pi} \hat{g}(g_t r_{\theta}\omega) \chi_{\epsilon}(g_t r_{\theta}\omega) d \theta \\
      & \leq \frac{\epsilon^{\alpha_2}}{2\pi} \int_{0}^{2\pi} \hat{g}(g_t r_{\theta}\omega) l^{-  \alpha_2}(g_t r_{\theta}\omega) \chi_{\epsilon}(g_t r_{\theta}\omega) d \theta \\
      & \leq \frac{\epsilon^{\alpha_2}}{2\pi} \left( \int_{0}^{2\pi}  l^{ \frac{\alpha_2 \kappa}{^{1+\kappa}}}(g_t r_{\theta}\omega) d \theta \right)^{\frac{\kappa+1}{\kappa}}  \left( \int_{0}^{2\pi}
      \hat{g}(g_t r_{\theta}\omega)^{1+\kappa} \chi_{\epsilon}(g_t r_{\theta}\omega)^{1+\kappa} d \theta \right)^{\frac{1}{1+\kappa}}\\
    \end{align*}
  where the second inequality is the Hölder inequality. Then by Theorem \ref{thm_IntSys} and the choice of $\alpha_2$ the first integral of the last line is finite. The second integral, with Theorem \ref{cor_LimAver}, converges to $$
  \int_{\HH} \hat{g}^{1+\kappa} \chi_{\epsilon}^{1+\kappa} d\mu.
  $$
  So the second integral is bound for all $t$ by a quantity called $K'(\omega,g)$.
  This completes the argument.

  The last affirmation is obvious looking at what defined the constants.
  \end{proof}

   As $\hat{h}_A$ does not satisfy the hypothesis of Theorem \ref{thm_effNevo}, we will approach $\hat{h}_A$ by a family of function depending on two parameters. The first one will smooth $h_A$ and the second will restrict its Siegel-Veech transform to the thick part of the moduli space.

   \begin{dfnt}
     Let $\eta: \mathbb{C}^2 \to [0,1]$ be a smooth function with support in $B(0,2)$ and equal to $1$ on $B(0,1)$ and such that $$
     \int_{\mathbb{C}^2} \eta d Leb = 1.
     $$
      Let $$
     \eta_s(y)= \frac{1}{s^4}\eta(\frac{y}{s})
     $$
     and
     $$
      R_{A,s}(\mathcal{T}) := \{ x \in \mathbb{C}^2, dist(x,R_A(\mathcal{T})) \leq s \}
     $$
     We consider
     $$
     g_{A,s}(x)= (\chi_{R_{A,s}(\mathcal{T})} * \eta_s)(x) = \int_{\mathbb{C}^2} \eta_s(x-y)\chi_{R_{A,s}(\mathcal{T})}(y) dLeb(y)
     $$

  $g_{A,s}$ is a function from $\mathbb{C}^2$ to $[0,1]$ which is differentiable, with a bounded differential and compact support. An easy consequence is that the $K$-derivative, $\partial_{\theta}g_{A,s}$ is also bound and is $O(\frac{1}{s})$.
\end{dfnt}

The supports of the function $g^t_A$ are increasing meaning that if $s<t<1$ then $$
supp(g_{s,A}) \subset supp(g_{t,A})
  $$
  In particular $supp(g_{t,A}) \subset supp(g_{1,A})$ for every $t \leq 1$.
  We bound every function $g_{t,A}$ by the characteristic function of $supp(g_{1,A})$.

  So using the first part of Lemma \ref{lem_SKnorme} we have that $$
|\hat{g}_{A,s}(\omega)| \leq C l(\omega)^{- \alpha_1}
  $$
with no dependency on $s$ for the constant.

This family of function also satisfy the following estimates.

\begin{lem}
  \label{lem_diffHetg}
  There are $\kappa_1,\kappa_2,\kappa_3>0$ such that, for almost every $\omega \in \HH$ there is a $n_0$ such that for $n \geq n_0(\omega)$ we have:
  \begin{itemize}
    \item $\left\vert A_{t_n} \hat{h}_A(\omega) - A_{t_n} \hat{g}_{A,s}(\omega) \right\vert \leq O_{\omega}(s^{\kappa_1})+  O(s^{-\kappa_2})e^{-2\lambda t_n}$
    \item $\left\vert \int_{\HH}\hat{g}_{A,s} d \mu  - \int_{\HH}\hat{h}_{A} d \mu \right\vert \leq O(s^{\kappa_3})$
  \end{itemize}
\end{lem}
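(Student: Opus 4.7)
The proof rests on the geometric observation that $h_A - g_{A,s}$ vanishes outside an $O(s)$-neighborhood $N_s$ of the boundary $\partial R_A(\mathcal{T})$, with $|h_A - g_{A,s}| \leq \chi_{N_s}$ and $\mathrm{Leb}(N_s) = O(s)$, since $\partial R_A(\mathcal{T})$ is a bounded piecewise-smooth $3$-dimensional set in $\mathbb{R}^4$.

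For the second bound I would first write
$$\left|\int_\HH \hat{h}_A \, d\mu - \int_\HH \hat{g}_{A,s} \, d\mu\right| \leq \int_\HH \widehat{|h_A - g_{A,s}|} \, d\mu \leq \int_\HH \widehat{\chi_{N_s}} \, d\mu,$$
and then invoke the pair Siegel--Veech integration formula for $\mu$: the positive functional $\phi \mapsto \int_\HH \hat{\phi}\, d\mu$ on $C_c(\mathbb{C}^2)$, which is finite by the corollary of Theorem \ref{thm_SVtr2}, defines an $SL(2,\mathbb{R})$-invariant Radon measure on $\mathbb{C}^2$; disintegrating it over the invariant $z\wedge w$ and handling the diagonal $\{z=w\}$ via the usual single-saddle Siegel--Veech bound, one obtains $\int_\HH \widehat{\chi_{N_s}} \, d\mu = O(s)$, so $\kappa_3 = 1$ suffices.

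For the first bound I would split $\HH$ at the threshold $\epsilon = s^\beta$ with $\beta>0$ to be optimized later. On the thin part $\{l(\omega) < \epsilon\}$, both $h_A$ and $g_{A,s}$ are dominated by the single fixed compactly supported function $g_{A,1}$, so Lemma \ref{lem_CircleAverageThin} applied separately to each yields
$$\bigl|A_{t_n}\bigl((\hat{h}_A - \hat{g}_{A,s})\chi_{l<\epsilon}\bigr)(\omega)\bigr| = O_\omega(\epsilon^{\alpha_2}) = O_\omega(s^{\beta\alpha_2}).$$
On the thick part $\{l(\omega)\geq\epsilon\}$, I would dominate pointwise $|\hat{h}_A - \hat{g}_{A,s}| \leq \widehat{\tilde{\Phi}_s}$, where $\tilde{\Phi}_s$ is a smooth envelope of $\chi_{N_s}$ fabricated exactly as $g_{A,s}$ is fabricated from $\chi_{R_{A,s}}$ (convolving $\chi_{N_{2s}}$ with a bump of scale $s$); it satisfies $\|\tilde{\Phi}_s\|_\infty \leq 1$, $\|\partial_\theta \tilde{\Phi}_s\|_\infty = O(1/s)$, and $\int \tilde{\Phi}_s\, d\mathrm{Leb} = O(s)$. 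Applying effective Nevo (Theorem \ref{thm_effNevo}) to $\widehat{\tilde{\Phi}_s}\chi_{l\geq\epsilon}$, together with the bound $S_K^2 = O(s^{-2}\epsilon^{-2\alpha_1})$ from Lemma \ref{lem_SKnorme} and the invariant-integral bound $O(s)$ from the previous paragraph, produces
$$A_{t_n}\bigl(\widehat{\tilde{\Phi}_s}\chi_{l\geq\epsilon}\bigr)(\omega) = O(s) + O\!\left(\frac{e^{-2\lambda t_n}}{s^2 \epsilon^{2\alpha_1}}\right).$$
Summing both contributions and substituting $\epsilon = s^\beta$ gives the stated form with $\kappa_1 = \min(\beta\alpha_2, 1)$ and $\kappa_2 = 2 + 2\beta\alpha_1$.

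The main technical obstacle is that effective Nevo provides a full-measure set and a threshold $n_0(\omega)$ for each fixed function, whereas the envelope $\tilde{\Phi}_s$ varies continuously in $s$. I would circumvent this by applying the theorem only to the countable dyadic family $\{\tilde{\Phi}_{2^{-k}}\}_{k\in\mathbb{N}}$, taking the union of the resulting null sets, and for an arbitrary $s$ sandwiching it between two consecutive dyadic scales; the nested supports of the envelopes then transfer the estimate to general $s$ up to an absolute multiplicative constant absorbed into the $O(\cdot)$.
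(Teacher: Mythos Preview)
Your overall architecture (smooth envelope $\tilde\Phi_s$ of $\chi_{N_s}$, thin/thick split at $\epsilon=s^\beta$, Lemma~\ref{lem_CircleAverageThin} on the thin part, effective Nevo with the $S_K$ bound from Lemma~\ref{lem_SKnorme} on the thick part) matches the paper's proof essentially line for line; the paper's $\psi_s$ is exactly your $\tilde\Phi_s$. Your remark about discretising $s$ dyadically to get a single full-measure set is a point the paper glosses over.

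There is, however, a genuine gap in your bound $\int_\HH \widehat{\chi_{N_s}}\,d\mu = O(s)$. You obtain it by disintegrating the pair Siegel--Veech measure $m$ over the invariant $t=z\wedge w$, but the boundary face $\{|z\wedge w|=A\}\subset\partial R_A(\mathcal T)$ is itself a union of $SL(2,\mathbb R)$-orbits, so the $m$-mass of its $s$-neighbourhood is governed by $\nu\bigl([A-Cs,A+Cs]\bigr)$, not by its Lebesgue volume. You would need $\nu$ to be absolutely continuous with locally bounded density near $A$, and for a general ergodic $SL(2,\mathbb R)$-invariant $\mu$ nothing of the sort is known; the paper says explicitly in Section~\ref{sec4} that ``little is known'' about $\nu$ and $\rho$. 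The single-saddle Siegel--Veech bound you invoke handles only the parallel locus and does not touch this issue. Since your first bullet also feeds on this $O(s)$ integral bound, both estimates are affected.

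The paper avoids the problem entirely: to control $\int_\HH\hat\psi_s\,d\mu$ it splits thin/thick once more, uses Lemma~\ref{lem_IntThinpart} for the thin contribution $O(\epsilon^{1/N-2\delta})$, and on the $\epsilon$-thick part combines the pointwise bound $|\hat\psi_s|=O(\epsilon^{-\alpha_1})$ with the volume estimate of Lemma~\ref{lem_MeasureOmega}, namely that the set of $\epsilon$-thick surfaces carrying a pair in $\operatorname{supp}\psi_s$ is contained in $\Omega(\epsilon,s,L,L')$ and has $\mu$-measure $O(s/\epsilon^{D})$. Optimising $\epsilon$ as a power of $s$ then yields $O(s^{\kappa_3})$ with $\kappa_3=\frac{1/N-2\delta}{D+\alpha_1+1/N-2\delta}<1$. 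The crucial input is Lemma~\ref{lem_MeasureOmega}, proved from the local linear structure of the affine orbifold $\mathcal M$ in period coordinates, which substitutes for the regularity of $\nu$ you are implicitly assuming. If you replace your $O(s)$ by this $O(s^{\kappa_3})$ (and its analogue inside the first bullet), your argument goes through and coincides with the paper's.
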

\begin{proof}

  Let's take
  $$
   F_{A,s}(\mathcal{T}) := \{ x \in \mathbb{C}^2, dist(x,\partial R_A(\mathcal{T})) \leq s \}
  $$
  We consider
  $$
  \psi_s(x)= (\chi_{F_{A,s}(\mathcal{T})} * \eta_s)(x)
  $$
  We have that $0 \leq g_{A,s}-h_A \leq \psi_s$, $\psi_s \leq 1$, the support of $\psi_s$ is compact, $\psi_s$ is differentiable with bounded differential. So,calling $\chi_{\epsilon}$ the characteristic function of the $\epsilon$-thin part, we can say using Theorem \ref{thm_effNevo} that for almost every $\omega \in \HH$ and $n\geq n_0(\omega)$
  $$
  \left\vert A_{t_n} \hat{h}_A(\omega) - A_{t_n} \hat{g}_{A,s}(\omega) \right\vert \leq  \left\vert A_{t_n} \hat{\psi}_s(\omega)\chi_{\epsilon} \right\vert + \left\vert A_{t_n} \hat{\psi}_s(\omega)(1-\chi_{\epsilon}) \right\vert
  $$
  As $\psi_s \leq \psi_1$, by Lemma \ref{lem_CircleAverageThin} we already have that
  $$
  \left\vert A_{t_n} \hat{\psi}_s(\omega)\chi_{\epsilon} \right\vert = O(\epsilon^{\alpha_2})K(\omega,\psi_s) \leq O(\epsilon^{\alpha_2})K(\omega,\psi_1)
  $$
  And using Theorem \ref{thm_effNevo},
  $$
  \left\vert A_{t_n} \hat{\psi}_s(\omega)(1-\chi_{\epsilon}) \right\vert
  \leq \left\vert  \int_{l(\cdot) \geq \epsilon} \hat{\psi}_s d \mu \right\vert
  + C S_K \left( \hat{\psi}_s (1-\chi_{\epsilon}) -\int_{\HH} \hat{\psi}_s (1-\chi_{\epsilon}) d \mu \right)^2 e^{-2 \lambda t_n}.
  $$
    Using the second part of Lemma \ref{lem_SKnorme}, we have easily that $$S_K \left( \hat{\psi}_s (1-\chi_{\epsilon}) -\int_{\HH} \hat{\psi}_s (1-\chi_{\epsilon}) d \mu \right)^2= O(\frac{1}{\epsilon^{2 \alpha_1}s^2}).$$

  Moreover $$
  \left\vert \int_{\HH}\hat{g}_{A,s} d \mu  - \int_{\HH}\hat{h}_{A} d \mu \right\vert \leq \int_{\HH} \hat{\psi}_s d \mu = \int_{l \leq \epsilon}\hat{\psi}_s d \mu + \int_{l \geq \epsilon}\hat{\psi}_s d \mu.
  $$

  For the first integral by Lemma \ref{lem_IntThinpart}, if $N$ is the maximal number of edge in a Delaunay triangulation and $\delta$ satisfies $\delta < \frac{1}{2N}$ then $$
  \int_{l \leq \epsilon}\hat{\psi}_s d \mu = O(\epsilon^{ 1/N -2 \delta}).
  $$
  Then by Lemma \ref{lem_SKnorme}, for each $\omega$ in the $\epsilon$ thick part, $|\hat{\psi}_s(\omega)|=O_{supp(\psi_s)}(\frac{1}{\epsilon^{\alpha_1}})=O_{supp(\psi_1)}(\frac{1}{\epsilon^{\alpha_1}})$. As the support of $\psi_s$ is included in $\Omega(\epsilon_1,s,L,L')$ for a $L$ big enough,
   Lemma \ref{lem_MeasureOmega} say that the measure of the support of $\hat{\psi}_s$ is $O(\frac{s}{\epsilon_1^{D}})$.
   Putting everything together we have that
  $$
  \int_{\HH}\hat{\psi}_s d \mu \leq O(\epsilon^{ 1/N -2 \delta})+O(\frac{s}{\epsilon^{D+\alpha_1}})
  $$
  Taking $\epsilon=s^{\frac{1}{D+\alpha_1+1/N-2\delta}}$ we got that $$
  \int_{\HH}\hat{\psi}_s d \mu \leq O(s^{\frac{1/N-2\delta}{D+\alpha_1+1/N-2\delta}}).
  $$
  This concludes the second point.

  For the first point we have
  $$
\left\vert A_{t_n} \hat{h}_A(\omega) - A_{t_n} \hat{g}_{A,s}(\omega) \right\vert \leq O(\epsilon^{\alpha_2})K(\omega)+ O(\frac{s}{\epsilon^{D+\alpha_1}}) +O(\frac{1}{\epsilon^{2 \alpha_1}s^2}) e^{-2\lambda t_n}
  $$
  Taking here $\epsilon=s^{\frac{1}{D+2\alpha_1+\alpha_2}}$ we have
  $$
\left\vert A_{t_n} \hat{h}_A(\omega) - A_{t_n} \hat{g}_{A,s}(\omega) \right\vert \leq O_{\omega}(s^{\frac{\alpha_2}{D+2\alpha_1+\alpha_2}})
+O(\frac{1}{s^{\frac{\alpha_1}{D+2\alpha_1+\alpha_2}+2}})
e^{-2\lambda t_n}
  $$
\end{proof}

We now fix an $\epsilon>0$ and call $\chi_{\epsilon}$ the characteristic function of $l^{-1}(]0,\epsilon[)$ on $\HH$. We have that,

\begin{align}
  \label{estimeff}
  \left\vert A_t \hat{h}_A(\omega) - \int_{\HH}\hat{h}_A d \mu \right\vert & \leq \left\vert A_t \hat{h}_A(\omega) - A_t \hat{g}_{A,s}(\omega) \right\vert + \left\vert A_t \hat{g}_{A,s}\chi_{\epsilon}(\omega) \right\vert \nonumber \\
   & + \left\vert A_t \hat{g}_{A,s}(1-\chi_{\epsilon})(\omega) - \int_{\HH}\hat{g}_{A,s}(1-\chi_{\epsilon}) d \mu \right\vert \\
   & + \left\vert \int_{\HH}\hat{g}_{A,s}\chi_{\epsilon} d \mu \right\vert + \left\vert \int_{\HH}\hat{g}_{A,s} d \mu  - \int_{\HH}\hat{h}_{A} d \mu \right\vert \nonumber
\end{align}
In the previous inequality we approximated $h_A$ by $g_{A,s}$ and then treated separately the thin and thick part of the moduli space. We can bound these terms with the following lemma.

\begin{lem}
  There are $\alpha_3,C>0$ such that
      \begin{itemize}
        \item $S_K( \hat{g}_{A,s}(1-\chi_{\epsilon})-\int_{\HH}\hat{g}_{A,s}(1-\chi_{\epsilon}) d \mu)^2 \leq \epsilon^{- \alpha_1}O(\frac{ 1}{s^2})$
        \item $ \int_{\HH} \hat{g}_{A,s}\chi_{\epsilon} d \mu \leq C_3 \epsilon^{\alpha_3}$
        \item $|A_t(\hat{g}_{A,s}\chi_{\epsilon})(\omega)| \leq \epsilon^{\alpha_2} K(\omega)$
      \end{itemize}
    \end{lem}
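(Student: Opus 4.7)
The plan is to obtain each of the three estimates as a direct consequence of the lemmas already proved in this section, with the only subtlety being that the bounds must be uniform in the smoothing parameter $s$. The key observation making this uniformity free of charge is that $\mathrm{supp}(g_{A,s}) \subset \mathrm{supp}(g_{A,1})$ for every $s \leq 1$, and furthermore $g_{A,s} \leq 1 = \chi_{\mathrm{supp}(g_{A,1})}|_{\mathrm{supp}(g_{A,s})}$. This lets us replace $g_{A,s}$ by the fixed reference function $g_{A,1}$ (or by the characteristic function of a ball $B(0,L_0)$ containing all the supports) whenever we need an $s$-free majorant.

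For the first bound, I apply the second part of Lemma \ref{lem_SKnorme} with $g=g_{A,s}$. By construction $g_{A,s}$ takes values in $[0,1]$, so $\|g_{A,s}\|_{\infty}\leq 1$; and since $g_{A,s}$ is a convolution with the rescaled bump $\eta_s(y)=s^{-4}\eta(y/s)$, differentiating under the integral and using the chain rule yields $\|\partial_{\theta}g_{A,s}\|_{\infty}=O(1/s)$, as already noted in the construction of $g_{A,s}$. Substituting these into Lemma \ref{lem_SKnorme} gives the asserted Sobolev-norm bound, where the implied constant depends only on $\mathrm{supp}(g_{A,1})$.

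For the second bound, positivity of the Siegel-Veech transform and the inclusion $g_{A,s}\leq \chi_{B(0,L_0)}$ (for some $L_0$ independent of $s$) give
\[
\int_{\HH}\hat{g}_{A,s}\chi_{\epsilon}\,d\mu \leq \int_{\HH_{\epsilon}}\widehat{\chi_{B(0,L_0)}}\,d\mu.
\]
Applying Lemma \ref{lem_IntThinpart}, the right-hand side is $O(\epsilon^{1/N-2\delta})$, so the estimate holds with $\alpha_3 = 1/N - 2\delta$ and an $s$-independent constant $C_3$. For the third bound, I apply Lemma \ref{lem_CircleAverageThin} to $g_{A,s}$; this directly produces a constant $K(\omega,g_{A,s})$ with the required $\epsilon^{\alpha_2}$ decay. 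The monotonicity statement in Lemma \ref{lem_CircleAverageThin} together with $g_{A,s}\leq g_{A,1}$ then lets us set $K(\omega) := K(\omega, g_{A,1})$, which is finite and independent of $s$, completing the estimate.

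The proof presents no real obstacle: the heavy lifting is done by Lemmas \ref{lem_SKnorme}, \ref{lem_IntThinpart} and \ref{lem_CircleAverageThin}, and the only care needed is to track constants so that they depend on $\mathrm{supp}(g_{A,1})$ rather than on $\mathrm{supp}(g_{A,s})$. The step most worth being explicit about is the uniformity of the constant $K(\omega)$ in the third estimate, since that is what will enable the eventual use of these bounds to run the $s \to 0$ argument in the circle-average estimate \eqref{estimeff}.
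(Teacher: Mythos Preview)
Your proof is correct, and for the first and third items it matches the paper's argument exactly (Lemma \ref{lem_SKnorme} for the Sobolev bound, Lemma \ref{lem_CircleAverageThin} plus the monotonicity $g_{A,s}\le g_{A,1}$ for the circle-average bound).

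For the second item you take a slightly different route than the paper. You majorize $g_{A,s}$ by $\chi_{B(0,L_0)}$ and invoke Lemma \ref{lem_IntThinpart} to obtain $\alpha_3 = 1/N - 2\delta$. The paper instead uses the pointwise bound $\hat{g}_{A,s}(\omega)\le C\,l(\omega)^{-\alpha_1}$ from Lemma \ref{lem_SKnorme}, then on $\{l\le\epsilon\}$ inserts the factor $(\epsilon/l(\omega))^{\alpha_3}\ge 1$ and appeals to Theorem \ref{thm_IntSys} (integrability of $l^{-\alpha}$ for $\alpha<2$) to conclude $\int_{\HH}\hat{g}_{A,s}\chi_\epsilon\,d\mu \le C\epsilon^{\alpha_3}\|l^{-\alpha_1-\alpha_3}\|_1$ for any $\alpha_3$ with $\alpha_1+\alpha_3<2$. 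Your argument is cleaner in that it reuses a lemma already proved for exactly this purpose, while the paper's argument is more elementary (it avoids the case analysis underlying Lemma \ref{lem_IntThinpart}) and yields a flexible exponent. Either route is perfectly adequate here, since only the existence of some $\alpha_3>0$ is needed downstream.
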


    \begin{proof}
      The first point is just Lemma \ref{lem_SKnorme}.

      For the second point, choosing $\alpha_3$ such that $\alpha_3 + \alpha_1 < 2$, with Theorem \ref{thm_IntSys} we have
      \begin{align*}
          \int_{\HH} \hat{g}_{A,s}\chi_{\epsilon} d \mu & \leq \int_{l(\omega \leq \epsilon)} \hat{g}_{A,s} d \mu \\
          & \leq \int_{l(\omega \leq \epsilon)} C l(\omega)^{- \alpha_1} d \mu \\
          & \leq \int_{l(\omega \leq \epsilon)} C \frac{\epsilon^{\alpha_3}}{l(\omega)^{ \alpha_3}}l(\omega)^{-  \alpha_1} d \mu \\
          & \leq \epsilon^{\alpha_3} C \| l^{-\alpha_3 - \alpha_1} \|_1
      \end{align*}
      The third point is an application of Lemma \ref{lem_CircleAverageThin}, in which we bound $g_{A,s}$ by  $g_{A,1}$ to get rid of the dependency on $s$.
    \end{proof}

Going back to inequality \ref{estimeff}, we have that for almost every $\omega \in \HH$ and $n \geq n_0(\omega)$

\begin{align*}
  \left\vert A_{t_n} \hat{h}_A(\omega) - \int_{\HH}\hat{h}_A d \mu \right\vert & \leq \left\vert A_{t_n} \hat{h}_A(\omega) - A_{t_n} \hat{g}_{A,s}(\omega) \right\vert + \left\vert A_{t_n} \hat{g}_{A,s}\chi_{\epsilon}(\omega) \right\vert \\
  & + \left\vert A_{t_n} \hat{g}_{A,s}(1-\chi_{\epsilon})(\omega) - \int_{\HH}\hat{g}_{A,s}(1-\chi_{\epsilon}) d \mu \right\vert \\
   & + \left\vert \int_{\HH}\hat{g}_{A,s}\chi_{\epsilon} d \mu \right\vert + \left\vert \int_{\HH}\hat{g}_{A,s} d \mu  - \int_{\HH}\hat{h}_{A} d \mu \right\vert \\
   & \leq O_{\omega}(s^{\kappa_1})+ O(s^{-\kappa_2})e^{-2\lambda t_n} + \epsilon^{\alpha_2} K(\omega) \\
   & + C e^{-2 \lambda t_n }S_K( \hat{g}_{A,s}(1-\chi_{\epsilon})-\int_{\HH}\hat{g}_{A,s}(1-\chi_{\epsilon}) d \mu)^2\\
   & + C_3 \epsilon^{\alpha_3} + O(s^{\kappa_3})\\
   & \leq  e^{-2 \lambda {t_n} }O(s^{-\kappa_2}+\frac{1}{s^2\epsilon_1})+O_{\omega}(s^{\kappa_1}+s^{\kappa_3}+\epsilon^{\alpha2}+\epsilon^{\alpha_1})
\end{align*}

Putting $\epsilon=e^{-f_1 t_n}$ and $s=e^{-f_2 t_n}$ we have
$$
  \left\vert A_{t_n} \hat{h}_A(\omega) - \int_{\HH}\hat{h}_A \right\vert \leq O_{\omega}(e^{-t_n(2\lambda-\kappa_2 f_2)} + e^{-t_n(2\lambda-2 f_2-\alpha_1 f_1)} + e^{-f_2 \kappa_1 t_n} + e^{-f_2 \kappa_3 t_n} + e^{ - f_1 \alpha_2 t_n} + e^{ - f_1 \alpha_3 t_n})
$$
Choosing wisely $f_1$ and $f_2$ we have for a $f>0$
$$
  \left\vert A_{t_n} \hat{h}_A(\omega) - \int_{\HH}\hat{h}_A \right\vert \leq O_{\omega}(e^{-t_n f}).
$$

\subsection{Error terms}

Equation (4.10) of \cite{athreya2022counting} indicates that $$
\left\vert N_A^*(\omega,e^t) - \pi e^{2t}(A_t \hat{h}_A)(\omega) \right\vert \leq |m_t(\omega)| + \sum_{i=1}^4 |e^i_t(\omega)|
$$
where the different terms are explained above.

Equation (5.4) of \cite{athreya2022counting} gives that $|m_t(\omega)|= O(e^{-2t})$. We will concentrate on the four other terms.

For the other error terms we choose $\hat{\epsilon}>0$ to work separately on the thin and thick parts of the moduli space.

If $\omega$ is on the $\hat{\epsilon}$ thin part of the moduli space, according to Corollary 5.8 of \cite{athreya2022counting}, we have that
$$
|E^k_t \cap \Lambda_{\omega}^2|=O(\hat{\epsilon}^{1/N-2\delta}e^{2t}).
$$

We should now pick $\epsilon'$ satisfying every inequality in the proof of theorem 5.1 in \cite{athreya2022counting} which are on the beginning of each paragraph "Error term $E^i_t$".

Taking larger bounds than these inequalities, we can find a $K$ such that $\epsilon'$ satisfies all the conditions if
$$
  K e^{-2t} \leq \epsilon'.
$$

To have estimates on the error terms on the thick part, it is needed to bound $|A_t h|$ where $h$ is the characteristic function of $\Omega(\frac{\hat{\epsilon}}{\sqrt{8}\pi}, \epsilon',L,L')$. As $h$ is not $K$-smooth we will also approach it by a family of function defined below.

To do that let's call $\Theta(\hat{\epsilon},\epsilon',L,L')$ the set of pair $(w,z)$ in $\mathbb{C}^2$ such that
$$\hat{\epsilon} \leq \min(|w|,|z|) \leq \max(|w|,|z|) \leq L$$
 and which verified at least one of these equations
  \begin{itemize}[label = $\bullet$]
      \item $1-\epsilon'\leq \frac{|\Imm(w)|}{|\Imm(z)|} \leq 1+ \epsilon'$
      \item $(1-\epsilon')A\leq |z \wedge w| \leq (1+ \epsilon')A$
      \item $|\Imm(z)-L'|<\epsilon'$
      \item $(1-\epsilon') \Imm(z)\leq |\Ree(z)| \leq (1+\epsilon')\Imm(z)$
  \end{itemize}

We consider $\eta_s$ the family of bump functions defined in the previous part and call $\xi_{s} = \eta_{s} * \chi_{ \{(z,w), dist((z,w),\Theta(\frac{\hat{\epsilon}}{\sqrt{8}\pi}, \epsilon',L,L')) \leq s\}}$.

$\xi_s$ have the following properties
\begin{enumerate}
  \item $\xi_s(z,w) \in [0,1]$
  \item $\xi_s\geq \chi_{\Theta(\frac{\hat{\epsilon}}{\sqrt{8}\pi},\epsilon',L,L')}$
  \item $supp(\xi_s) \subset \Theta(\frac{\hat{\epsilon}}{\sqrt{8}\pi}-s,\epsilon'+\frac{s}{\hat{\epsilon}},L+s,L') \subset \Theta(\frac{\hat{\epsilon}}{2\sqrt{8}\pi},\epsilon'+\frac{s}{\hat{\epsilon}},2L,L')$ for $s$ small enough
  \item $\xi_s$ is differentiable with its norm bound by $O(1/s)$.
\end{enumerate}

Then $h$ is bound by the Siegel-Veech transform of these functions time the characteristic function of the $\frac{\hat{\epsilon}}{\sqrt{8}\pi}$-thick part of the moduli space
$$\chi_{l(\cdot \geq \frac{\hat{\epsilon}}{\sqrt{8}\pi})} \hat{\xi}_s := \Xi_{s,\hat{\epsilon}}.$$

$\Xi_{s,\hat{\epsilon}}$ has its support on $\Omega(\frac{\hat{\epsilon}}{\sqrt{8}\pi}, \epsilon' + \frac{\sqrt{8}\pi s}{\hat{\epsilon}}, 2L,L')$ and is $K$-smooth.

Using the first part of Lemma \ref{lem_SKnorme} we have that for all $\omega \in \HH$ $$|\Xi_{s,\hat{\epsilon}}(\omega)| \leq O(\hat{\epsilon}^{- \alpha_1}).$$

Using the second part of Lemma \ref{lem_SKnorme} we have that $S_K(\Xi_{s,\hat{\epsilon}} - \int_{\HH}\Xi_{s,\hat{\epsilon}} d \mu)^2 = O(\frac{1}{\hat{\epsilon}^{2 \alpha_1}s^2})$

Moreover using Lemma \ref{lem_MeasureOmega} we got $$
\int_{\HH} \Xi_{s,\hat{\epsilon}} d\mu  \leq \|\Xi_{s,\hat{\epsilon}}\|_{\infty}\mu(\Omega(\frac{\hat{\epsilon}}{\sqrt{8}\pi}, \epsilon' + \frac{s}{\hat{\epsilon}}, 2L,L')) = O(\hat{\epsilon}^{-\alpha_1})
 O(\frac{\epsilon'+\frac{s}{\hat{\epsilon}}}{\hat{\epsilon}^D}).
$$

We have that for $\mu$-almost every $\omega \in \HH$ that for $n \geq n_0(\omega)$
$$|A_{t_n} \hat{h}| \leq |A_{t_n} \Xi_{s,\hat{\epsilon}}| \leq \int_{\HH} \Xi_{s,\hat{\epsilon}} d \mu
+ C S_K(\Xi_{s,\hat{\epsilon}} - \int_{\HH}\Xi_{s,\hat{\epsilon}} d \mu)^2 e^{-\lambda t_n} =
 O(\frac{\epsilon'+\frac{s}{\hat{\epsilon}}}{\hat{\epsilon}^{D+ \alpha_1}} + \frac{e^{-\lambda t_n}}{ \hat{\epsilon}^{2 \alpha_1} s^2}). $$

 Taking $s=\epsilon' \hat{\epsilon}$ we have $$
|A_{t_n} \hat{h}|=O(\frac{\epsilon'}{\hat{\epsilon}^{D+\alpha_1}} + \frac{e^{-\lambda t_n}}{\epsilon'^2 \hat{\epsilon}^{2\alpha_1+2} })
 $$

Following the rest of the proof of Proposition 5.1 of Athreya, Fairchild and Masur in \cite{athreya2022counting}, on the thick part of the moduli space, $|E^k_{t_n}\cap \Lambda_{\omega}^2|$ is bound by the product of two quantities.

The first one is a counting of acceptable sector $\# I(\theta_i)$ which is shown to be $O(e^{2t}|A_t \hat{h}|)$. The second one is the maximum number of pair of saddle connections in each sector which they showed to be $O(\frac{1}{\hat{\epsilon}^{1+\delta}})$.

With the previous computation we have that
 $$
\# I(\theta_i) = e^{2t_n}|A_{t_n} \hat{h}| =e^{2t_n}O(\frac{\epsilon'}{\hat{\epsilon}^{D+\alpha_1}} + \frac{e^{-\lambda t_n}}{\epsilon'^2 \hat{\epsilon}^{2\alpha_1+2} })
$$

and then in the thick part $$
|E^k_{t_n} \cap \Lambda_{\omega}^2|= e^{2 t_n}O \left( (\frac{1}{\hat{\epsilon}})^{1+\delta}\left (\frac{\epsilon'}{\hat{\epsilon}^{D+\alpha_1}} + \frac{e^{-\lambda t_n}}{\epsilon'^2 \hat{\epsilon}^{2\alpha_1+2} } \right) \right).
$$

Grouping the thin and thick part we have $$
|E^k_{t_n}\cap \Lambda_{\omega}^2|= e^{2 t_n}O \left( \hat{\epsilon}^{1/N-2\delta}+ (\frac{1}{\hat{\epsilon}})^{1+\delta}\left (
\frac{\epsilon'}{\hat{\epsilon}^{D+\alpha_1}} + \frac{e^{-\lambda t_n}}{\epsilon'^2 \hat{\epsilon}^{2\alpha_1+2} }\right) \right).
$$
Then by Lemma 4.1 of \cite{athreya2022counting}, there is a $T_0$ such that for $t \geq T_0$,
$$
   A_t(h_A)(z,w)\pi e^{2t} \leq e^{2t} \arctan(e^{-2t}) \leq 2.
$$
Then $$
|\chi_{D_A(e^t/2,e^t)}-A_t(h_A)(z,w)\pi e^{2t}| \leq 3.
$$
As
$$
e^i_t(\omega)= (\chi_{E^i_t}\cdot (\chi_{D_A(e^t_n /2, e^t)}-\pi e^{2t}(A_t h_A)))^{SV}(\omega)
$$
and hence$$
|e^i_{t_n}(\omega)| = e^{2 t_n}O \left( \hat{\epsilon}^{1/N-2\delta}+ (\frac{1}{\hat{\epsilon}})^{1+\delta}\left (\frac{\epsilon'}{\hat{\epsilon}^{D+\alpha_1}} + \frac{e^{-\lambda t_n}}{\epsilon'^2 \hat{\epsilon}^{2\alpha_1+2} } \right) \right).
$$
So taking $\epsilon' = K e^{- \nu t}$ with $\nu= \min(\lambda/3,2)$ and as $\hat{\epsilon}^{2\alpha_1+2}$ and $\hat{\epsilon}^{D+\alpha_1} \geq \hat{\epsilon}^{D+2\alpha_1+2}$, we have for some $\lambda'$
$$
|e^i_{t_n}(\omega)| = e^{2 t_n}O \left( \hat{\epsilon}^{1/N-2\delta}+  \left (\frac{e^{-\lambda' t_n}}{\hat{\epsilon}^{D+2\alpha_1+3+\delta}}\right) \right)
$$
Finally taking $\hat{\epsilon}=e^{-f_3t}$ with $f_3=\frac{\lambda'}{D+2\alpha_1+3+\delta+1/N-2\delta}$ we have shown that for each sequence $(t_n)$ satisfying equation \ref{eq_ConditionTemp}, for almost every $\omega \in \HH$ there is a $n_0(\omega)$ such that for $n \geq n_0$, if $t_n > T_0$
 $$
|e^i_{t_n}(\omega)| = e^{2 t_n}O(e^{-f_4 t})
$$
with $f_4>0$. For technical reason, which will appear after, we choose $f_4 < \min(f,2)$.
\subsection{End of the estimation}

Putting everything together we have for every sequence $(t_n)$ respecting condition of equation \ref{eq_ConditionTemp} for almost every $\omega \in \HH$, and for every $n \geq n_0(\omega)$,
\begin{align}
  \label{eq_ComptageSuite}
  \left\vert N_A^*(\omega,e^{t_n}) -  \pi e^{2 t_n} \int_{\HH}h_A d \mu \right\vert & \leq  \left\vert N_A^*(\omega,e^{t_n}) - \pi e^{2 t_n}(A_{t_n} \hat{h}_A)(\omega) \right\vert + \left\vert \pi e^{2 t_n} \int_{\HH}h_A d \mu - \pi e^{2 t_n}(A_t \hat{h}_A)(\omega) \right\vert \nonumber \\
  & \leq \left\vert N_A^*(\omega,e^{t_n}) - \pi e^{2 t_n}(A_{t_n} \hat{h}_A)(\omega) \right\vert + |m_{t_n}(\omega)| + \sum_{i=1}^4 |e^i_{t_n}(\omega)| \nonumber \\
  & = e^{2 t_n}O(e^{-f_4 t_n})
\end{align}

We consider the family of sequence $(\frac{t_n}{2^j})$. For each sequence there is a set of full measure $\HH_i$ on which the estimate \ref{eq_ComptageSuite} is true. Considering the set $\HH_{\infty}= \cap_{j \in \mathbb{N}} \HH_i$, which is also of full measure, we have the estimate \ref{eq_ComptageSuite} for all times $\frac{t_n}{2^j}$.

Fixing a natural number $n$, let's call $K = \lfloor \frac{t_n-ln(T_0)}{ln(2)} \rfloor$ such that $\frac{e^{t_n}}{2^K} > T_0$ we have
\begin{align*}
  \frac{N_A(\omega,e^{t_n})}{e^{2 t_n}} & = \sum_{j=0}^K \frac{N_A^{*}(\omega,e^{t_n}/2^j)}{e^{2 t_n}} + \frac{N_A^{*}(\omega,e^{t_n}/2^K)}{e^{2 t_n}} \\
  & = \sum_{j=0}^K 2^{-2j}\frac{N_A^{*}(\omega,e^{t_n}/2^{2j})}{e^{2 t_n}/2^{2j}} + 2^{- 2 K}\frac{N_A^{*}(\omega,e^{t_n}/2^{2K})}{e^{2 t_n}/2^{2K}} \\
\end{align*}

As we have $$
\frac{N_A^{*}(\omega,e^{t_n}/2^{2j})}{e^{2 t_n}/2^j} = \pi \int_{\HH} h_A d \mu + O \left( \frac{2^{f_4 j}}{e^{f_4 t_n}} \right)
$$
so
\begin{align*}
  \frac{N_A(\omega,e^{t_n})}{e^{2 t_n}} & = \sum_{j=0}^K 2^{-2j} \left( \pi \int_{\HH} h_A d \mu + O \left( \frac{2^{f_4 j}}{e^{f_4 t_n}} \right) \right) + 2^{- 2 K}\frac{N_A^{*}(\omega,e^{t_n}/2^{2K})}{e^{2 t_n}/2^K} \\
  & = 4(\frac{1-(1/4)^K}{3})\pi \int_{\HH} h_A d \mu +  O \left( \frac{1}{e^{f_4 t_n}} \right)\frac{1-2^{(f_4-2)K}}{1-2^{f_4-2}} + 2^{- 2 K}\frac{N_A^{*}(\omega,e^{t_n}/2^{2K})}{e^{2 t_n}/2^K}
\end{align*}

By proposition 3.2 and equation 1.1 of \cite{athreya2022counting} $\frac{N_A^{*}(\omega,e^{t_n}/2^{2K})}{e^{2 t_n}/2^K}$ is bounded by a constant $M$. Moreover as $f_4<2$ we have also $\frac{1-2^{(f_4-2)K}}{1-2^{f_4-2}} \leq M'$
\begin{align*}
  | \frac{N_A(\omega,e^{t_n})}{e^{2 t_n}} -c_{\mu}(A)| \leq c_{\mu}(A) (1/4)^K + \frac{f(t_n)}{e^{f_4 t_n}}M'  + 2^{- 2 K}M
\end{align*}
as $K \geq \frac{t_n-ln(T_0)}{ln(2)} - 1$ we have
\begin{align*}
  | \frac{N_A(\omega,e^{t_n})}{e^{2 t_n}} -  c_{\mu}(A) | & \leq c_{\mu}(A) (1/4)^{\frac{t_n-ln(T_0)}{ln(2)} - 1} +
   \frac{f(t_n)}{e^{f_4 t_n}}M'  + 2^{- 2 (\frac{t_n-ln(T_0)}{ln(2)} - 1)}M \\
   & = O(e^{-f_5 t_n})
\end{align*}
with $f_5= \min (2 \ln(2),f_4)$.

\subsubsection{Estimate for all times}

Let's take $t_n=\theta \log(n)$, which satisfy equation \ref{eq_ConditionTemp} for $\theta$ big enough, by monotony we have for a general time $t$ such that $n \leq t \leq n+1$ and $n \geq T_0$,

\begin{align*}
  e^{2 \theta \log(n)} \left( c_{\mu}(A) - O(e^{-f_5 \theta \log(n)}) \right) \leq N_A(\omega,e^{\theta \log(n)}) \leq N_A(\omega,e^{\theta \log(t)})  \\
  \leq N_A(\omega,e^{\theta \log(n+1)}) \leq e^{2 \theta \log(n+1)} \left( c_{\mu}(A) + O(e^{-f_5 \theta \log(n+1)}) \right)
\end{align*}

So that $$
n^{2 \theta} \left( c_{\mu}(A) - O(\frac{1}{n^{f_5 \theta}}) \right)  \leq N_A(\omega,t^{\theta}) \leq (n+1)^{2 \theta} \left( c_{\mu}(A) + O(\frac{1}{(n+1)^{f_5 \theta}})\right)
$$

Then $$
\left( \frac{n}{t} \right)^{2 \theta} \left( c_{\mu}(A) - O \left( \left( \frac{t}{n} \right)^{f_5 \theta} \frac{1}{t^{f_5 \theta}} \right) \right)  \leq \frac{N_A(\omega,t^{\theta})}{t^{2 \theta}}
\leq \left( \frac{n+1}{t} \right)^{2 \theta} \left( c_{\mu}(A) + O \left( \left( \frac{t}{n+1} \right)^{f_5 \theta} \frac{1}{t^{f_5 \theta}}  \right) \right)
$$
But then as $1-\frac{1}{t} \leq \frac{n}{t} \leq 1$ and $1 \leq \frac{n+1}{t} \leq 1+\frac{1}{t}$  we have
$$
\left\vert \frac{N_A(\omega,t^{\theta})}{t^{2 \theta}} - c_{\mu}(A) \right\vert = O\left(  \frac{1}{t^{f_5 \theta}}  + \frac{1}{t^{2 \theta}} \right)
$$

as $f_5 \leq 2 \ln(2) \leq 2$, taking $s=t^{\theta}$, we can conclude that
$$
\left| \frac{N_A(\omega,s)}{s^2} - c_{\mu}(A) \right| = O \left( \frac{1}{s^{f_5}} \right).
$$

This concludes the proof of the existence of the power saving error term in Theorem \ref{thm2}.

\section{Property of the constants $c_{\mu}(A)$}
\label{sec4}

  Fixing a $A>0$ and a ergodic $\SLR$-invariant measure $\mu$, the constant $c_{\mu}(A)$ remains mysterious.
  By the previous computation, it is equal to
 $$
 c_{\mu}(A) := \frac{4}{3} \pi \int_{\HH}\hat{h}_A d \mu = \frac{4}{3} \pi \int_{\mathbb{C}^2} h_A dm
 $$

  where $m$ is called \emph{Siegel-Veech measure} and is invariant with respect to the $\SLR$ action on $\mathbb{C}^2$. We can decompose further as
  $$
  \int_{\mathbb{C}^2} h_A dm = \int_{\mathbb{R} \setminus \{ 0\}} \left( \int_{\SLR} h(tz, w) d\lambda(z, w) \right) d \nu(t) + \int_{\mathbb{P}^1(\mathbb{R})}\left( \int_{\mathbb{C}} h(z, sz)dz \right) d \rho(s)
  $$

  where $\lambda$ is the Haar measure on $\SLR$ and $\nu=\nu(m)$ and $\rho=\rho(m)$ are decomposition of the measure $m$, on which little is known.

  We can still get a result of convergence for a $*$-weak limit of a sequence of ergodic $\SLR$-invariant probability measures.

  \begin{thm}
  Fixing an $A$, let $(\mu_n)$ be a sequence of $\SLR$-invariant probability ergodic measures on $\HH$, such that $\mu_n \to \nu$ in the weak-$*$ topology, where $\nu$ is another ergodic $\SLR$-invariant probability measure. Then the constants satisfy $$
  c_{\mu_n}(A) \to c_{\nu}(A).
  $$
\end{thm}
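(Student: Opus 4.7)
The plan is to reduce the statement to continuity of Siegel-Veech integrals of \emph{continuous} compactly supported functions, through a standard sandwich approximation. From the computation of the previous sections, we have
$$c_\mu(A) = \frac{4}{3}\pi \int_\HH \hat{h}_A \, d\mu,$$
so it suffices to show $\int_\HH \hat{h}_A \, d\mu_n \to \int_\HH \hat{h}_A \, d\nu$. The difficulty is that $\hat{h}_A$ is neither continuous nor bounded on $\HH$, which prevents a direct application of weak-$*$ convergence to this integrand.

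The key input is a theorem of Dozier, alluded to in the acknowledgements, ensuring that for every $g \in C_c(\mathbb{C}^2)$, the Siegel-Veech integral $\mu \mapsto \int_\HH \hat{g} \, d\mu$ is continuous under weak-$*$ convergence of ergodic $\SLR$-invariant probability measures. Combining this with Lemma 3.4 of \cite{athreya2022counting}, which produces for every $\epsilon > 0$ non-negative continuous compactly supported functions $g_\epsilon^\pm \in C_c(\mathbb{C}^2)$ satisfying $g_\epsilon^- \leq h_A \leq g_\epsilon^+$ with $g_\epsilon^+ - g_\epsilon^-$ supported in an $\epsilon$-neighborhood of the piecewise-smooth boundary $\partial R_A(\mathcal{T})$, puts us in position to run the standard sandwich argument.

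By the triangle inequality,
$$\frac{3}{4\pi}\left|c_{\mu_n}(A) - c_\nu(A)\right| \leq \int (\hat{g}_\epsilon^+ - \hat{g}_\epsilon^-) \, d\mu_n + \left| \int \hat{g}_\epsilon^+ \, d\mu_n - \int \hat{g}_\epsilon^+ \, d\nu \right| + \int (\hat{g}_\epsilon^+ - \hat{g}_\epsilon^-) \, d\nu.$$
The middle term tends to zero as $n \to \infty$ by Dozier's continuity applied to the fixed function $g_\epsilon^+$. The right term tends to zero as $\epsilon \to 0$ by dominated convergence using Theorem \ref{thm_SVtr2} (so that Siegel-Veech transforms of compactly supported bounded functions lie in $L^{1+\kappa}(\HH,\nu)$) together with $g_\epsilon^\pm \to h_A$ pointwise outside $\partial R_A(\mathcal{T})$. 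The main obstacle is the left term, which requires uniformity in $n$: the idea is to apply Dozier's continuity a second time to the continuous compactly supported function $g_\epsilon^+ - g_\epsilon^-$, which yields that for all $n$ sufficiently large
$$\int (\hat{g}_\epsilon^+ - \hat{g}_\epsilon^-) \, d\mu_n \leq \int (\hat{g}_\epsilon^+ - \hat{g}_\epsilon^-) \, d\nu + \epsilon,$$
a quantity that is itself $O(\epsilon)$ by the right-term analysis. Choosing $\epsilon$ small first, then $n$ large, concludes the proof.
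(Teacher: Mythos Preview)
Your sandwich strategy is sound in outline, but the black box you lean on does not quite exist. Dozier's convergence paper \cite{dozier_convergence_2019} treats the one-variable Siegel--Veech constant, i.e.\ continuity of $\mu \mapsto \int_\HH \hat f\, d\mu$ for $f$ on $\mathbb{R}^2$; the two-variable statement for $g \in C_c(\mathbb{C}^2)$ that you invoke is not a formal corollary of the one-variable case and requires essentially the same uniform-integrability machinery as the theorem you are proving. You have relocated the substantive step---uniformity in $n$---into an uncited lemma.

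The paper takes a different decomposition. Rather than smoothing $h_A$ on $\mathbb{C}^2$, it cuts off on the target space $\HH$ via a continuous bump $\chi_K$ supported on $\{l(\omega)^{-1}\leq K+1\}$, writing
\[
\int_\HH \hat h_A\,d\mu_n \;=\; \int_\HH \hat h_A \chi_K\,d\mu_n \;+\; \int_\HH \hat h_A(1-\chi_K)\,d\mu_n.
\]
The thick contribution is then handled by weak-$*$ convergence (the integrand being bounded on the support of $\chi_K$). The essential point is the thin contribution, which must be small \emph{uniformly in $n$}: the paper bounds $\hat h_A$ pointwise by a negative power of $l$ via Eskin--Masur \cite[Theorem~5.1]{eskin_masur_2001}, and then shows $\int_\HH l^{-(1+\delta+\delta')}\,d\mu_n \leq b$ with $b$ independent of $\mu_n$. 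This uniform moment bound is the heart of the matter; it is obtained by feeding Dozier's circle-average estimate (Lemma~\ref{BorneCircle}, i.e.\ Proposition~1.1 of \cite{dozier_convergence_2019}) into Nevo's ergodic theorem (Theorem~\ref{thm_Nevo}) applied at a $\mu_n$-generic point, which yields $\int_\HH l^{-(1+\delta+\delta')}\,d\mu_n = \lim_t \int \phi(t-s)\,A_s(l^{-(1+\delta+\delta')})\,ds \leq b$ with $b$ coming only from the stratum.

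If you were to supply the two-variable continuity lemma you assert, you would end up running exactly this thick/thin argument for $\hat g$ in place of $\hat h_A$; at that point your approach would be a legitimate alternative, and arguably tidier in that it decouples the approximation of the indicator $h_A$ from the uniform-integrability step. As written, though, the gap is real: your proof does not contain the uniform bound on $\int l^{-\alpha}\,d\mu_n$, which is where all the work lies.
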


The proof is similar to the proof for Siegel-Veech constants made by Dozier \cite{dozier_convergence_2019}. We briefly recall it here.

\begin{proof}

  Let's call $l(X)$ the length of the shortest saddle connection on the flat surface $X$ and let $C_K=\{ X \in \HH, \frac{1}{l(X)} \leq K \}$. We take $\chi_K$ be a continuous function with image in $[0,1]$ whose value is $1$ on $C_K$ and $0$ on $\HH - C_{K+1}$.

  Then $$
  c_{\mu_n}(A) := \frac{4}{3} \pi \int_{\HH} \hat{h}_A d \mu_n = \frac{4}{3} \pi \left( \int_{\HH} \hat{h}_A \chi_K d \mu_n + \int_{\HH} \hat{h}_A (1-\chi_K) d \mu_n \right)
  $$

  We want to use $*$-weak convergence of the measure for the integral on the thick part of the stratum and to control the integral in the thin independently of the measure $\mu_n$.

  As $h_A$ is bounded and compactly supported, $\hat{h}_A$ is bounded by $\hat{f}^2$ where $f=\chi_B \| h \|_{\infty}$ with $B$ a ball big enough to contain the union of the projections of the support of $h$ via the maps $\mathbb{R}^4 \to \mathbb{R}^2$
   $$
  (x,y) \mapsto x \text{ and } (x,y) \mapsto y
  $$
 Eskin and Masur show that \cite[Theorem 5.1]{eskin_masur_2001} $\hat{f} < \frac{C}{l^{1+\delta}}$ for some $C$ and $0< \delta < 1/2$.
  Choosing a $\delta'$ in order that $\delta + \delta' < 1/2$ we obtain
  $$
\int_{\HH} \hat{h}_A (1-\chi_K) d \mu_n \leq \int_{\HH - C_K} \frac{C}{l^{1+\delta}} d \mu_n \leq \frac{C}{K^{\delta'}}\int_{\HH - C_K} \frac{1}{l^{1+\delta+\delta'}} d \mu_n
  $$
  The last integral is finite by a lemma of the two previously cited authors \cite[Lemma 5.5]{eskin_masur_2001}, and we can bound it from above by a constant independent of $\mu_n$. First we recall a bound on circle average made by Dozier.
  \begin{lem}[Proposition 1.1 of \cite{dozier_convergence_2019}]
    \label{BorneCircle}
    For any stratum $\HH$ and $0< \delta < 1/2$, there exists a function $\alpha :\HH \to \mathbb{R}_+ $ and constants $c_0,b$ such that for any $X \in \HH$, $$
    \int_0^{2 \pi} \frac{1}{l(g_t r_{\theta} X)^{1+\delta}} d\theta \leq c_0 e^{-(1-2 \delta)t}\alpha(X)+ b
    $$
  \end{lem}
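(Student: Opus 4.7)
The proof plan follows the Eskin--Masur--Mohammadi operator-contraction strategy, adapted to a general stratum via the multi-scale compactification introduced earlier (Theorem \ref{NeiPerCa}).

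\textbf{Construction of $\alpha$.} Rather than working directly with $l^{-(1+\delta)}$, I would introduce an auxiliary function dominating it which is better behaved under circle averaging. Schematically, set
\[
\alpha(X) \;=\; \sum_{S} \prod_{v\in S} \frac{1}{|v|^{1+\delta}},
\]
where the sum ranges over $\MM$-independent subsets $S$ of saddle connections on $X$ of length at most some fixed small $\epsilon_0$. The singleton containing the shortest saddle connection gives $\alpha(X) \geq l(X)^{-(1+\delta)}$, so it suffices to prove the operator-type contraction
\[
\frac{1}{2\pi}\int_0^{2\pi}\alpha(g_t r_\theta X)\,d\theta \;\leq\; c_0\, e^{-(1-2\delta)t}\alpha(X) + b,
\]
from which the stated inequality follows by monotonicity.

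\textbf{Single-vector estimate.} The analytic core is the flat estimate for a single non-zero $v\in\mathbb{R}^2$:
\[
\int_0^{2\pi}\frac{d\theta}{|g_t r_\theta v|^{1+\delta}} \;\leq\; C\, e^{-(1-\delta)t}\,|v|^{-(1+\delta)} \;+\; O(1).
\]
After rotating so that $v=(r,0)$, one has $|g_t r_\theta v|^2 = r^2(e^{2t}\cos^2\theta + e^{-2t}\sin^2\theta)$. The dominant contribution comes from an angular window of width $O(e^{-2t})$ around the vertical direction, and the substitution $u = e^{2t}(\pi/2-\theta)$ reduces that piece to the convergent integral $\int (u^2+1)^{-(1+\delta)/2}\,du$, yielding the $e^{-(1-\delta)t}$ factor; outside this window the integrand is uniformly bounded in $t$, producing the $O(1)$ term.

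\textbf{Passage to moduli space and main obstacle.} To lift the single-vector bound to $\alpha$, I would partition the circle $[0,2\pi)$ according to which admissible subsystems remain short after applying $g_t r_\theta$. On each piece, factoring the product in the definition of $\alpha$ over independent vectors and applying the single-vector bound factor by factor produces a multiplicative gain of essentially $e^{-(1-\delta)t}$ per element of $S$; the loss from $(1-\delta)$ to $(1-2\delta)$ is precisely what must be paid to absorb the cases where a new short vector emerges that was not in any $S$ at time $0$. The principal difficulty is this book-keeping near the boundary $\overline{\HH}\setminus\HH$: using the period coordinate charts of Theorem \ref{NeiPerCa} one must enumerate the finitely many combinatorial types of $\MM$-independent short systems, control how their lengths transform under $g_t r_\theta$ via Lemmas \ref{doz1} and \ref{doz2}, and collect the bounded "transition" contributions into the constant $b$. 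Summing over a finite cover of $\overline{\HH}$ by such charts yields the global contraction for $\alpha$, and hence, via $l(X)^{-(1+\delta)} \leq \alpha(X)$, the desired estimate.
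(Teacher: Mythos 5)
The paper does not prove this lemma at all: it is imported verbatim as Proposition~1.1 of Dozier's paper on convergence of Siegel--Veech constants, which in turn rests on the Eskin--Masur contraction machinery. So there is no in-paper argument to compare against, and your proposal has to be judged on its own terms. Your overall architecture --- dominate $l^{-(1+\delta)}$ by an auxiliary function built from independent systems of short saddle connections, prove an operator inequality $A_t\alpha\leq c_0e^{-(1-2\delta)t}\alpha+b$, and feed in the single-vector circle-average estimate --- is indeed the strategy used in the literature, and your single-vector computation (the $O(e^{-2t})$ angular window and the substitution giving $\int(u^2+1)^{-(1+\delta)/2}du$) is correct.

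There is, however, a genuine gap at the multi-vector step. $\MM$-independence of saddle connections is a homological condition; it does \emph{not} prevent their holonomy vectors from being parallel or nearly parallel in $\mathbb{R}^2$ (two core curves of distinct cylinders in the same direction, for instance). For such a pair the angular windows in which $g_tr_\theta$ contracts them coincide, and
$$
\int_0^{2\pi}\prod_{i=1}^{2}\|g_tr_\theta v_i\|^{-(1+\delta)}\,d\theta \;\asymp\; \frac{e^{2\delta t}}{|v_1v_2|^{1+\delta}},
$$
since the total exponent $2(1+\delta)$ exceeds $2$; the circle average of the product \emph{grows} exponentially rather than contracting. Consequently the function $\alpha(X)=\sum_S\prod_{v\in S}|v|^{-(1+\delta)}$ as you define it cannot satisfy the claimed inequality, and the step ``factoring the product over independent vectors and applying the single-vector bound factor by factor'' is not valid --- the circle average of a product is not the product of circle averages, and the correct multilinear estimate requires the exponents attached to the vectors of a given system to sum to strictly less than $2$. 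This is exactly where the hypothesis $\delta<1/2$ and the degradation from $e^{-(1-\delta)t}$ to $e^{-(1-2\delta)t}$ actually originate: in the Eskin--Masur construction the exponents are distributed over the saddle connections of a complex so that the total degree stays below $2$, and a separate combinatorial argument (the ``complexes'' bookkeeping, which you flag but do not supply) absorbs the saddle connections that become short only at time $t$ into the constant $b$. As written, your plan would need both of these ingredients replaced or supplied before it yields the lemma.
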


  Then, choosing any smoothing function $\phi$ non-negative, smooth and compactly supported with integral over $\mathbb{R}$ equal to $1$, we get by Nevo ergodic Theorem \ref{thm_Nevo}, for a generic $X \in \HH$
  \begin{align*}
    \int_{\HH - C_K} \frac{1}{l^{1+\delta+\delta'}} d \mu_n & \leq \int_{\HH } \frac{1}{l^{1+\delta+\delta'}} d \mu_n \\
    & = \underset{t \to \infty}{\lim} \int_{-\infty}^{\infty} \phi(t-s) \left( \frac{1}{2 \pi}\int_0^{2\pi} \frac{1}{l(g_s r_{\theta}X)^{1+\delta}} d\theta \right) ds \\
    & \leq b
  \end{align*}
  So picking an $\epsilon$ we can choose $K$ big enough such that for all $n$
  $$
  \int_{\HH} \hat{h}_A (1-\chi_K) d \mu_n \leq \frac{C}{K^{\delta'}}\int_{\HH - C_K} \frac{1}{l^{1+\delta+\delta'}} d \mu_n \leq \frac{Cb}{K^{\delta'}} \leq \epsilon /2.
  $$
  Then by $*$-weak convergence of the measure there exist a $N$ such that for $n \geq N$
  $$
  \int_{\HH} \hat{h}_A \chi_K d \mu_n \leq \epsilon /2.
  $$
  Putting the two together gives the desired result.
\end{proof}

This theorem has a direct consequence on the possible values of the constants $c_{\mu}(A)$ when $\mu$ varies on the space of ergodic $\SLR$-invariant probability measure, namely they all fall in a closed interval of $\mathbb{R}_{>0}$.
Using the following proposition of Eskin, Mirzakhani, and Mohammadi

\begin{thm}[Theorem 2.3 of \cite{eskin2015isolation}]
  The space of ergodic $\SLR$-invariant probability measure is compact in the $*$-weak topology.
\end{thm}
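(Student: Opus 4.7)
The plan is to combine a tightness argument for the full space of $\SLR$-invariant probability measures with the closure of the ergodic ones under weak-$*$ limits, the latter being a deep consequence of the Eskin--Mirzakhani classification.

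First, I would establish relative compactness of the full family of $\SLR$-invariant probability measures via Prokhorov's theorem. The key input is the uniform integrability bound on the inverse systole $l^{-1}$: by Theorem \ref{thm_IntSys}, for any $1 \leq \alpha < 2$, the function $l^{-\alpha}$ lies in $L^1(\HH,\mu)$, and one can make this bound uniform across all $\SLR$-invariant probability measures by integrating the uniform-on-compacts bound for $A_t(l^{-\alpha})$ against a smooth bump function in $t$ (as in the calculation just carried out in the proof of the previous theorem, relying on Lemma \ref{BorneCircle}). Since the sublevel sets $\{ l \geq \epsilon \}$ are compact in $\HH$ by Mumford's criterion, Markov's inequality yields $\mu(\{ l < \epsilon\}) \leq B \epsilon^{\alpha}$ uniformly in $\mu$, which is precisely tightness. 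A standard approximation argument then shows that any weak-$*$ limit of $\SLR$-invariant probability measures remains $\SLR$-invariant and carries total mass one, i.e.\ no mass escapes.

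Next, I would argue that the ergodic measures form a closed subset. By the measure classification of Eskin and Mirzakhani \cite{EskinMirzakhani}, every ergodic $\SLR$-invariant probability measure is the canonical flat measure $\mu_{\MM}$ on an affine invariant submanifold $\MM$. The isolation theorem of Eskin, Mirzakhani, and Mohammadi then implies that such submanifolds cannot accumulate in a non-trivial way: if $\mu_{\MM_n} \to \nu$ weakly, then for large $n$ the $\MM_n$ are contained in a single affine invariant submanifold $\MM_\infty$ carrying its own flat measure $\mu_{\MM_\infty}$, and the convergence forces $\nu = \mu_{\MM_\infty}$. In particular $\nu$ is itself ergodic, which closes the argument.

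The main obstacle is the closure step. In the purely abstract setting of a group action on a Polish space, the ergodic invariant probability measures form only a $G_\delta$ subset of the extreme points of the Choquet simplex and are almost never compact; so one cannot hope to avoid the geometric input. All the weight of the statement rests on invoking the EMM rigidity theory as a black box, whereas the tightness half is essentially routine once one has uniform integrability of a negative power of the systole.
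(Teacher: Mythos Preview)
The paper does not give its own proof of this statement: it is quoted verbatim as Theorem~2.3 of \cite{eskin2015isolation} and used as a black box. So there is nothing to compare your attempt to in the paper itself.

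That said, your outline is a fair high-level sketch of how Eskin, Mirzakhani, and Mohammadi actually obtain the result in \cite{eskin2015isolation}: one half is uniform non-escape of mass (tightness), which in their paper follows from the uniform integrability of a negative power of the systole --- essentially the argument you describe via Lemma~\ref{BorneCircle} and Theorem~\ref{thm_IntSys}; the other half is that any weak-$*$ limit of ergodic $\SLR$-invariant probability measures is again ergodic, which genuinely requires their isolation/equidistribution theorem and cannot be obtained by soft arguments. You are right to flag that in a general dynamical setting the ergodic measures are only a $G_\delta$ and not closed, so this second step carries all the depth. For the purposes of the present paper, however, one should simply cite \cite{eskin2015isolation} rather than reproduce the argument.
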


We get the following corollary.
\begin{cor}
  For every $A$, there are two constants $m(A),M(A)$ such that for every ergodic $\SLR$-invariant measure $\mu$ $$
  m(A) \leq c_\mu(A) \leq M(A)
  $$
\end{cor}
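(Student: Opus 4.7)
The plan is to view $\mu \mapsto c_\mu(A)$ as a continuous real-valued function on a compact topological space and invoke the standard fact that such a function attains its extrema. Let $\mathcal{E}$ denote the set of ergodic $\SLR$-invariant probability measures on $\HH$, equipped with the weak-$*$ topology, and consider the map
\[
\Phi_A \colon \mathcal{E} \longrightarrow \mathbb{R}_{\geq 0}, \qquad \mu \longmapsto c_\mu(A).
\]
The continuity theorem established just above tells us that $\Phi_A$ is continuous, while the quoted theorem of Eskin, Mirzakhani and Mohammadi tells us that $\mathcal{E}$ is compact. The image $\Phi_A(\mathcal{E})$ is therefore a compact subset of $\mathbb{R}$, and setting
\[
m(A) := \min \Phi_A(\mathcal{E}), \qquad M(A) := \max \Phi_A(\mathcal{E})
\]
immediately yields the claimed inequalities $m(A) \leq c_\mu(A) \leq M(A)$ for every $\mu \in \mathcal{E}$.

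The only remaining point worth highlighting is the strict positivity of $m(A)$, which matches the statement in the preceding paragraph that the constants fall in a closed subinterval of $\mathbb{R}_{>0}$. By compactness the infimum is realized at some $\mu_\star \in \mathcal{E}$, and the explicit formula $c_{\mu_\star}(A) = \tfrac{4\pi}{3} \int_{\HH} \hat{h}_A \, d\mu_\star$ reduces the task to checking that the non-negative Siegel--Veech transform $\hat{h}_A$ is not $\mu_\star$-almost everywhere zero. This is a soft consequence of Masur's quadratic lower bound on $N(\omega, R)$ available at every $\omega \in \HH$, combined with $\SLR$-invariance of $\mu_\star$, which guarantees that a positive $\mu_\star$-mass of surfaces admits pairs of saddle connections whose holonomy vectors fall in the interior of the fibered trapezoid $R_A(\mathcal{T})$.

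There is no substantial obstacle: the entire content of the corollary has already been carried by the continuity theorem of the preceding subsection and by the compactness of $\mathcal{E}$. The final argument is simply the textbook observation that the continuous image of a compact set is compact, and in particular attains its minimum and maximum.
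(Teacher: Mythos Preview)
Your argument is correct and is essentially the same as the paper's: both combine the continuity theorem for $\mu \mapsto c_\mu(A)$ with the Eskin--Mirzakhani--Mohammadi compactness of $\mathcal{E}$. You phrase it directly as ``continuous image of a compact set is compact,'' whereas the paper runs the equivalent contradiction argument (extract a convergent subsequence from a hypothetical sequence with $c_{\mu_n}(A)\to\infty$). Your added paragraph on the strict positivity $m(A)>0$ goes slightly beyond what the paper's proof actually establishes, since the paper's ``similar proof'' for the lower bound really only yields boundedness below, not positivity; your sketch via Masur's quadratic lower bound is a reasonable way to fill that in.
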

\begin{proof}
  By contradiction taking a sequence of measure $\mu_n$ such that $c_{\mu_n}(A) \to \infty$ we can extract a subsequence $*$-weakly converging to a measure $\mu$. As the constant for this subsequence goes to $\infty$ and to the constant of the limit measure, we have a contradiction.

  The lower bound is obtained by a similar proof.
  \end{proof}

\subsection{End of proof of Theorem \ref{thm2}}

Given an ergodic $\SLR$-invariant measure $\mu$ on $\HH$, we showed in Section \ref{sec3} the quadratic growth $N_A(\omega,R)$ for $\mu$-almost every $\omega$ in $\HH$. Then in Section \ref{sec5} we proved that this quadratic growth admits a power saving error term. Finally in this Section, we demonstrated the continuity of the constants $c_{\mu}(A)$ with respect to $\mu$. By putting these results together, we get a proof of the main result, Theorem \ref{thm2}.

\bibliographystyle{plainnat}
\bibliography{bibliographie.bib}

\begin{thebibliography}{16}
\providecommand{\natexlab}[1]{#1}
\providecommand{\url}[1]{\texttt{#1}}
\expandafter\ifx\csname urlstyle\endcsname\relax
  \providecommand{\doi}[1]{doi: #1}\else
  \providecommand{\doi}{doi: \begingroup \urlstyle{rm}\Url}\fi

\bibitem[Athreya et~al.(2019)Athreya, Cheung, and Masur]{athreya2017siegel}
Jayadev~S Athreya, Yitwah Cheung, and Howard Masur.
\newblock Siegel-veech transforms are in {$L^2$}.
\newblock \emph{Journal Of Modern Dynamics}, pages 1--19, 2019.

\bibitem[Athreya et~al.(2022)Athreya, Fairchild, and
  Masur]{athreya2022counting}
Jayadev~S Athreya, Samantha Fairchild, and Howard Masur.
\newblock Counting pairs of saddle connections.
\newblock \emph{arXiv:2201.08628}, 2022.

\bibitem[Bainbridge et~al.(2018)Bainbridge, Chen, Gendron, Grushevsky, and
  Möller]{Bainbridge_2018}
Matt Bainbridge, Dawei Chen, Quentin Gendron, Samuel Grushevsky, and Martin
  Möller.
\newblock Compactification of strata of abelian differentials.
\newblock \emph{Duke Mathematical Journal}, 167\penalty0 (12), sep 2018.

\bibitem[Dozier(2019)]{dozier_convergence_2019}
Benjamin Dozier.
\newblock Convergence of {Siegel}–{Veech} constants.
\newblock \emph{Geometriae Dedicata}, 198:\penalty0 131--142, 2019.

\bibitem[Dozier(2020)]{dozier2020measure}
Benjamin Dozier.
\newblock Measure bound for translation surfaces with short saddle connections.
\newblock \emph{arXiv:2002.10026}, 2020.

\bibitem[Eskin and Masur(2001)]{eskin_masur_2001}
Alex Eskin and Howard Masur.
\newblock Asymptotic formulas on flat surfaces.
\newblock \emph{Ergodic Theory and Dynamical Systems}, 21\penalty0
  (2):\penalty0 443–478, 2001.

\bibitem[Eskin and Mirzakhani(2018)]{EskinMirzakhani}
Alex Eskin and Maryam Mirzakhani.
\newblock Invariant and stationary measures for the {$SL(2,\mathbb{R})$} action
  on moduli space.
\newblock \emph{Publications mathematiques de l'IHES}, 127\penalty0
  (1):\penalty0 95--324, 2018.

\bibitem[Eskin et~al.(2015)Eskin, Mirzakhani, and
  Mohammadi]{eskin2015isolation}
Alex Eskin, Maryam Mirzakhani, and Amir Mohammadi.
\newblock Isolation, equidistribution, and orbit closures for the {SL} (2,
  $\mathbb{R}$) action on moduli space.
\newblock \emph{Annals of Mathematics}, pages 673--721, 2015.

\bibitem[Kontsevich and Zorich(2003)]{kontsevich2003connected}
Maxim Kontsevich and Anton Zorich.
\newblock Connected components of the moduli spaces of abelian differentials
  with prescribed singularities.
\newblock \emph{Inventiones mathematicae}, 153\penalty0 (3):\penalty0 631--678,
  2003.

\bibitem[Masur(1982)]{masurmeasure}
Howard Masur.
\newblock Interval exchange transformations and measured foliations.
\newblock \emph{Annals of Mathematics}, 115\penalty0 (1):\penalty0 169--200,
  1982.

\bibitem[Masur(1990)]{masur_1990}
Howard Masur.
\newblock The growth rate of trajectories of a quadratic differential.
\newblock \emph{Ergodic Theory and Dynamical Systems}, 10\penalty0
  (1):\penalty0 151–176, 1990.

\bibitem[Masur and Smillie(1991)]{masur_smilie}
Howard Masur and John Smillie.
\newblock Hausdorff dimension of sets of nonergodic measured foliations.
\newblock \emph{Annals of Mathematics}, 134\penalty0 (3):\penalty0 455--543,
  1991.
\newblock ISSN 0003486X.

\bibitem[Nevo(2017)]{NevoErgo}
Amos Nevo.
\newblock Equidistribution in measure-preserving actions of semisimple groups :
  case of {$SL_2(\mathbb{R})$}.
\newblock \emph{arXiv preprint arXiv:1708.03886}, 2017.

\bibitem[Nevo et~al.(2020)Nevo, R{\"u}hr, and Weiss]{nevo2020effective}
Amos Nevo, Rene R{\"u}hr, and Barak Weiss.
\newblock Effective counting on translation surfaces.
\newblock \emph{Advances in Mathematics}, 360:\penalty0 106890, 2020.

\bibitem[Veech(1982)]{Veechmeasure}
William~A. Veech.
\newblock Gauss measures for transformations on the space of interval exchange
  maps.
\newblock \emph{Annals of Mathematics}, 115\penalty0 (2):\penalty0 201--242,
  1982.
\newblock ISSN 0003486X.

\bibitem[Veech(1998)]{VeechQuad}
William~A Veech.
\newblock Siegel measures.
\newblock \emph{Annals of Mathematics}, 148\penalty0 (3):\penalty0 895--944,
  1998.

\end{thebibliography}
\end{document}